\documentclass[11pt]{amsart}
\usepackage[utf8]{inputenc}
\usepackage{amsmath,amssymb,amsthm,mathrsfs,mathtools}
\usepackage{array,enumitem,yfonts}
\usepackage{comment}
\usepackage[normalem]{ulem}
\usepackage{bm}
\usepackage[alphabetic]{amsrefs}
\usepackage[all,cmtip]{xy}
\usepackage[colorlinks,anchorcolor=blue,citecolor=blue,linkcolor=blue,urlcolor=blue,bookmarksdepth=2]{hyperref}
\allowdisplaybreaks

\usepackage{fullpage}
\usepackage[margin=1in]{geometry}

\usepackage{cleveref}

\usepackage{tikz}
\usetikzlibrary{cd,arrows}
\tikzset{>=stealth}
\tikzcdset{arrow style=tikz}
\tikzset{link/.style={column sep=1.8cm,row sep=0.16cm}}

\AtBeginDocument{%
	\def\MR#1{}
}
\usepackage{fancyhdr}
\fancypagestyle{titlepage}
{
	\fancyhf{}

	\fancyfoot[l]{
		\href{https://mathscinet.ams.org/mathscinet/msc/msc2020.html}
		{\emph{2020 Mathematics Subject Classification}: 11B85, 13B40, 13B35}
		\\
		\emph{Keywords}: Automatic sequence; Christol's theorem; Henselization; beta expansion.
            \\
	}
}

\newcommand{\bN}{\mathbb{N}}    
\newcommand{\bZ}{\mathbb{Z}}    
\newcommand{\bF}{\mathbb{F}}    

\newcommand{\cC}{\mathcal{C}}   
\newcommand{\cD}{\mathcal{D}}   
\newcommand{\cI}{\mathcal{I}}   
\newcommand{\cS}{\mathcal{S}}   

\newcommand{\fa}{\mathfrak{a}}  
\newcommand{\fb}{\mathfrak{b}}  
\newcommand{\fc}{\mathfrak{c}}  
\newcommand{\fp}{\mathfrak{p}}	
\newcommand{\fm}{\mathfrak{m}}  
\newcommand{\fq}{\mathfrak{q}}  
\newcommand{\fn}{\mathfrak{n}}  

\newcommand{\bx}{\bm{x}}        

\DeclareMathOperator{\Spec}{Spec}	
\DeclareMathOperator{\id}{id}		
\DeclareMathOperator{\depth}{depth}     
\DeclareMathOperator{\height}{height}     
\DeclareMathOperator{\colimit}{colim}     

\newtheorem{Thm}{Theorem}[section]
\newtheorem{Prop}[Thm]{Proposition}
\newtheorem{Lem}[Thm]{Lemma}

\numberwithin{equation}{section}

\theoremstyle{definition}

\newtheorem{Defn}[Thm]{Definition}

\newtheorem{Rmk}[Thm]{Remark}

\begin{document}
	
  \title{\bf On the generalization of Christol's theorem to domains of finite type over finite fields}
  \author{Chunlin~Wang, Lian~Duan}
  \date{}
  \newcommand{\ContactInfo}{{
    \bigskip\footnotesize
    \bigskip 
        \noindent Chunlin~Wang,
    \textsc{School of Mathematical sciences\\
	      Sichuan Normal University\\
		  Jinjiang, Chengdu, China}\par\nopagebreak
		  \noindent\texttt{c-l.wang@outlook.com} 
          
    \noindent Lian~Duan,
    \textsc{Institute of Mathematics science\\
	      ShanghaiTech University\\
		  Pudong, Shanghai, China}\par\nopagebreak
		  \noindent\texttt{duanlian@shanghaitech.edu.cn}
    }}
	
  \begin{abstract} 

We generalize Christol's theorem---which characterizes algebraic power series over a finite field in terms of automatic coefficient sequences---to completions of domains of finite type over a finite field~\(\mathbb{F}_{q}\). Let \(A\) be such a domain of dimension~\(d\) and \(\mathfrak{a}\) an ideal of depth~\(d\). In the \(\mathfrak{a}\)-adic completion \(\hat{A}\), choose an \(\hat{A}\)-regular sequence \(\boldsymbol{x}=\{x_{1},\dots,x_{d}\}\subset\hat{\mathfrak{a}}\) and a set of representatives \(E\) of \(\hat{A}/(\boldsymbol{x})\) that is an \(\mathbb{F}_{q}\)-vector subspace consisting of algebraic elements. Every element \(y\in\hat{A}\) then admits a unique expansion \(y=\sum_{u\in\mathbb{N}^{d}}a_{u}x^{u}\) with coefficients \(a_{u}\in E\). We prove that \(y\) is algebraic over \(A\) if and only if the coefficient sequence \(\{a_{u}\}_{u\in\mathbb{N}^{d}}\) is \(q\)-automatic. The proof relies on a careful study of henselizations, which are shown to coincide with the algebraic closures inside the completions under certain conditions. As an application, we obtain a complete characterization of the automaticity of \(\beta\)-expansions for formal Laurent series over \(\mathbb{F}_{q}\), answering questions posed by Hbaib--Mkaouar and Scheicher--Sirvent.
  \end{abstract}

  \maketitle
    \thispagestyle{titlepage}
	
    \tableofcontents

    \newpage
    
\section{Introduction}\label{Sect: intro} 
Let $\bF_q$ be a finite field of $q$ elements. A power series $f(x)\in \bF_q[[x]]$ is called \emph{algebraic} (over $\bF_q[x]$) if there exists a two-variable polynomial $P(x,y)$ over $\bF_q$ such that $P(x,f(x))=0$. A famous result on algebraic power series, due to Christol, states that a power series $f(x)=\sum_{i\ge 0}a_ix^i$ is algebraic if and only if its sequence of coefficients $\{a_i\}_{i\in \bN}$ is $q$-automatic. Here a sequence $\{a_i\}_{i\in \bN}$ is called (one-dimensional) \emph{$q$-automatic} if the following set of subsequences
$$\{ \{a_{i q^j+k}\}_{i\in \bN}: j\ge 0, 0\le k<q^j \}$$ 
is finite (see \Cref{Defn: automatic_sequence} and \Cref{Lem: k-kernel} for more information on automatic sequences). 

Christol's theorem stands as a cornerstone in positive characteristic arithmetic because it establishes a profound dictionary between algebra and computation. It perfectly equates a purely algebraic property—being an algebraic power series over $\mathbb{F}_q$—with a combinatorial one: having a $q$-automatic coefficient sequence. This equivalence provides a powerful bridge that allows mathematicians to decode the intricate structure of algebraic series by analyzing the deterministic automata that generate their coefficients. Consequently, daunting questions surrounding transcendence and algebraicity are reduced to tractable, discrete problems, such as bounding the $q$-kernel. A striking testament to the power of this approach is its successful application in proving the transcendence of the Carlitz period (see for example \cite{Allouche}). 

Christol's theorem is fascinating also because it provides a contrast to the behavior of real numbers, as highlighted by the normal number conjecture. For an integer $b \ge 2$, a real number $x$ is said to be normal in base $b$ if, in its base-$b$ expansion, every finite block of digits of length $k$ appears with an asymptotic frequency of $1/b^k$. A number is absolutely normal if it is normal in every integer base $b \ge 2$. Borel's normal number conjecture \cite{Borel} states that every irrational algebraic real number is absolutely normal. Since a normal number is never automatic, this conjecture implies that any irrational number with an automatic base-$b$ expansion must be transcendental. Actually, although the normal number conjecture remains open, a partial result towards this conjecture by Adamczewski and Bugeaud \cite{Adamczewski-Bugeaud} already implies that irrational real numbers with automatic base-$b$ expansion is transcendental. However, the situation is entirely different in positive characteristic: Christol's theorem ensures that every algebraic element over $\mathbb{F}_q(x)$ possesses an automatic coefficient sequence.

Due to its importance, the study of Christol's theorem has attracted much attention. Its generalizations and analogues are extensively considered in literature. Salon \cites{Salon87,Salon89} generalized Christol's Theorem to multivariable power series over finite fields. Independently, Denef and Lipshitz \cite{Denef-Lipshitz}, Sharif and Woodcock \cite{Sharif-Woodcock}, as well as Harase \cites{Harase88, Harase89} generalized Christol's Theorem to the case of multivariable power series over any field of positive characteristic. An extension of Christol's theorem to generalized algebraic power series is given by Kedlaya \cite{Kedelaya}. The automaticity of $\beta$-expansion of formal Laurent series is considered in \cite{Hbaib-Mkaouar} and \cite{Scheicher-Sirvent}. Recently, an arithmetic analogue of Christol's theorem for Witt vectors over algebraic number fields are acquired \cite{Uramoto}.  {These generalizations and analogues of Christol's theorem underscores its role as a cornerstone theorem in characteristic $p$ algebra, linking the theory of algebraic series directly to finite automata theory and offering concrete combinatorial invariants for algebraic objects.}

In general, whenever an algebraic structure admits ``expansion'' of its elements, it is natural to ask whether a Christol-type theorem holds---that is, whether a kind of algebraicity corresponds to a certain automaticity of expansion coefficients. From this perspective, we consider a generalization of Christol's theorem for certain power series expansions in completions (with respect to given ideals) of domains of finite type over finite fields. Such expansions, which may be treated as a generalization of classical power series over finite fields, frequently arise in commutative algebra. 

To state our results, we set up the following framework. The reader is referred to \Cref{Sect: power_ser_exp} for more background about the related terminologies and facts. Let $A$ be an integral domain of finite type over $\bF_q$ of Krull dimension $d$. Let $\fa$ be an ideal in $A$ of depth $d$. Let $\hat{A}$ be the completion of $A$ at $\fa$. Then $\hat{\fa}:=\fa\hat{A}$ is an ideal of depth $d$ in $\hat{A}$. Fix an $\hat{A}$-regular sequence $\bx:=\{x_1,\dots,x_d\}$ of length $d$ in $\hat{\fa}$. Let $(\bx)$ be the ideal of $\hat{A}$ generated by $x_1,\dots,x_d$. Then $\hat{A}/(\bx)$ is finite. Let $E$ be a \emph{set of representatives} of $\hat{A}/\hat{\bx}$, i.e., $E$ is a subset of $\hat{A}$ such that the natural composition 
$$E\hookrightarrow \hat{A} \to \hat{A}/(\bx)$$ 
is a bijection of sets, where the first map is the inclusion and the second is the canonical quotient map. We show certain power series expansions with respect to $\bx$ and $E$ exist for elements of $\hat{A}$. 

\begin{Thm}\label{Thm: exist_uniq_power_series}
Let $\bx=\{x_1,\dots,x_d\}$ and $E$ be described as above. Then every $y\in \hat{A}$ can be uniquely written as 
$$
y=\sum_{u\in \bN^d} a_ux^u, \quad  a_u\in E ~ {\rm for ~ all ~} u\in \bN^d,
$$
where for $u=(u_1,\dots,u_d)\in\bN^d$, $x^u=x_1^{u_1} \cdots x_d^{u_d}$.
\end{Thm}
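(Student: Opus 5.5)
Existence will come from successive approximation and uniqueness from the regularity of $\bx$. First I record two facts. The completion $\hat{A}$ is noetherian and $\hat{\fa}$-adically complete. Since $\bx\subset\hat{\fa}$, we have $(\bx)^n\subseteq\hat{\fa}^n$, so any series $\sum_u a_u x^u$ with $a_u\in\hat A$ converges in $\hat{A}$: the partial sums over $|u|\le n$ form a Cauchy sequence. Moreover $\hat{A}/(\bx)$ is finite, hence Artinian, so $\hat{\fa}^N\subseteq(\bx)$ for some $N$. Therefore the $(\bx)$-adic and $\hat{\fa}$-adic topologies agree, and $\hat{A}$ is $(\bx)$-adically complete and separated, by Krull's intersection theorem applied in the local factors, or simply because the topologies coincide.

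\emph{Existence.} I will build by induction on $n$ elements $a_u\in E$ for $|u|=n$, together with a remainder $r_n\in(\bx)^{n}$, such that
\[
y-\sum_{|u|<n}a_ux^u=r_n .
\]
Write $r_n=\sum_{|v|=n} c_v x^v$ with $c_v\in\hat A$. For each $v$, let $a_v\in E$ be the representative of $c_v$ modulo $(\bx)$. Then $c_v-a_v\in(\bx)$, so $r_n-\sum_{|v|=n}a_vx^v\in(\bx)^{n+1}$. The choice of the $c_v$ is not canonical, but that does not matter for existence. Passing to the limit, using completeness and separatedness, gives $y=\sum_u a_ux^u$.

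\emph{Uniqueness.} This is the main step, since it needs the regular sequence. It suffices to show that if $\sum_u a_ux^u=\sum_u b_ux^u$ with $a_u,b_u\in E$, then $a_u=b_u$. The key input is the standard fact that for an $\hat{A}$-regular sequence, $\mathrm{gr}_{(\bx)}\hat A\cong(\hat A/(\bx))[X_1,\dots,X_d]$, with $X_i\mapsto x_i$. Equivalently: if $\sum_{|u|=n}c_ux^u\in(\bx)^{n+1}$, then every $c_u\in(\bx)$. I would quote this (Rees' theorem, e.g.\ Matsumura Thm.~16.2) rather than reprove it.

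Suppose $a\neq b$ and let $n$ be the minimal degree with $a_u\ne b_u$ for some $|u|=n$. Then
\[
\sum_{|u|=n}(a_u-b_u)x^u=-\sum_{|u|>n}(a_u-b_u)x^u\in(\bx)^{n+1},
\]
where the tail lies in $(\bx)^{n+1}$ because that ideal is closed, ideals of a noetherian complete ring being closed. By the quasi-regularity fact, $a_u-b_u\in(\bx)$ for all $|u|=n$. Since $E\to\hat A/(\bx)$ is injective, $a_u=b_u$, a contradiction.

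I do not need $E$ to be a subspace here: only the bijectivity of $E\to\hat A/(\bx)$ is used, through the congruence $a_u\equiv b_u \pmod{(\bx)}$.
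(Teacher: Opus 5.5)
Your proof is correct and follows essentially the same route as the paper. Existence is by successive approximation exactly as in \Cref{Lem: exist_power_series}, and uniqueness is by induction on $|u|$ using that an $\hat{A}$-regular sequence is quasi-regular (\Cref{Prop: M_seq_isom_graded_mod}), as in the implication (2)$\Rightarrow$(1) of \Cref{Prop: uniq_expan_eqv_quasi-reg}; your remark that the tail $\sum_{|u|>n}(a_u-b_u)x^u$ lies in $(\bx)^{n+1}$ because that ideal is closed also fills in a step the paper uses without comment.
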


If we take $A=\bF_q[x_1, \dots ,x_d]$ to be the polynomial ring over $\bF_q$, $\fa=(x_1, \dots, x_d)$ to be the ideal generated by $x_1, \dots, x_d$, $\bx=\{x_1, \dots, x_d\}$ and $E=\bF_q$, then $\hat{A}=\bF_q[[x_1, \dots, x_d]]$ and the expansions in \Cref{Thm: exist_uniq_power_series} recover the classical power series over $\bF_q$. The main purpose of this paper is to generalize Christol's theorem from the power series rings to the setup in Theorem \ref{Thm: exist_uniq_power_series}. 

Recall that an element $y\in \hat{A}$ is \emph{algebraic} (over $A$) if there exists $P(T)\in A[T]$ such that $P(y)=0$. For a given positive integer $k$, like the classical case, a sequence $\{a_u\}_{u\in \bN^d}$ over a finite set is called ($d$-dimensional) \emph{$k$-automatic} if the following set of subsequences
$$\{ \{a_{u k^i+v}\}_{u\in \bN}: i\ge 0, v=(v_1, \dots, v_d), 0\le v_j<k^i \}$$ 
is finite. With these notations, we show that 

\begin{Thm}\label{Thm: algebraic_eqv_automatic}
With the notation of Theorem \ref{Thm: exist_uniq_power_series}. Assume elements of $\bx$ and $E$ are algebraic over $A$. If $E$ is an $\bF_q$-vector subspace of $\hat{A}$, then $y$ is algebraic if and only if $\{a_u\}_{u\in \bN^d}$ is $q$-automatic. 
\end{Thm}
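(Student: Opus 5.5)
The plan is to reduce to the classical multivariable Christol theorem (Salon, Denef--Lipshitz, Sharif--Woodcock, Harase) through an $\bF_q$-linear encoding. Put $B:=\bF_q[[t_1,\dots,t_d]]$. Since $\hat A/(\bx)$ is finite, $E$ is a finite-dimensional $\bF_q$-vector space; fix a basis $e_1,\dots,e_m$ of $E$. Given the expansion $y=\sum_u a_ux^u$ of \Cref{Thm: exist_uniq_power_series}, write $a_u=\sum_{j}c_{u,j}e_j$ with $c_{u,j}\in\bF_q$, and set $f_j:=\sum_u c_{u,j}t^u\in B$. Then $\{a_u\}$ is $q$-automatic if and only if each $\{c_{u,j}\}_u$ is, because $a_u\mapsto(c_{u,j})_j$ is a bijection of finite alphabets and the $q$-kernel commutes with coordinatewise maps (\Cref{Lem: k-kernel}). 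By Salon's theorem, this holds if and only if each $f_j$ is algebraic over $\bF_q(t)$. It therefore suffices to show: $y$ is algebraic over $A$ if and only if every $f_j$ is algebraic over $\bF_q[t]$.

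Consider the continuous homomorphism $\phi\colon B\to\hat A$, $t_i\mapsto x_i$, which is well defined because $x_i\in\hat\fa$ and $\hat A$ is complete. Linearity of $E$ gives $y=\sum_j e_j\phi(f_j)$. For ``automatic $\Rightarrow$ algebraic'': if each $f_j$ is algebraic, then $\phi(f_j)$ is algebraic over $\bF_q[\bx]$. The $x_i$ are algebraic over $A$, and the algebraic elements of $\hat A$ over $A$ form a subring, being the integral closure of $A$ in $\hat A$ up to denominators from $A$ (which is fine, as $\hat A$ is $A$-torsion free because $A\to\hat A$ is flat). Hence $\phi(f_j)$ is algebraic over $A$, and so is $y$, since the $e_j$ are also algebraic.

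For the converse, the key claim is that $\phi$ makes $\hat A$ a finite free $B$-module with basis $E$, i.e.\ $\hat A=\bigoplus_j e_j\phi(B)$. This follows from \Cref{Thm: exist_uniq_power_series} together with the regularity of $\bx$: $\phi$ is injective, as the uniqueness part applied to $\bF_q$-combinations shows, and $\hat A$ is $(\bx)$-adically complete, generated by $E$ modulo $(\bx)$. So $\hat A$ is a finite extension of $B$, and $\hat A\otimes_B\mathrm{Frac}(B)$ has basis $e_1,\dots,e_m$. Let $R$ be the algebraic closure of $\bF_q(\bx)$ inside $\mathrm{Frac}(\hat A)$. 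If $y$ is algebraic over $A$, then $y$ is algebraic over $\bF_q(\bx)$, because $A$ is algebraic over $\bF_q(\bx)$: this uses the paper's henselization results identifying algebraic elements over $A$ with those over $\bF_q[\bx]$, using that the $e_j$ and $x_i$ are algebraic and $\dim A=d$. The $f_j$ are the coordinates of $y$ in the basis $e_j$, so they are obtained by applying the trace dual basis, $f_j=\mathrm{Tr}(e_j^\vee y)$, when the extension is separable. In general one uses instead Cramer's rule on the $m\times m$ matrix expressing multiplication by $y$. In either case each $\phi(f_j)$ lies in the field generated over $\bF_q(\bx)$ by $y$ and the algebraic elements $e_j$ and the conjugates involved, so it is algebraic over $\bF_q(\bx)\cong\bF_q(t)$. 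Hence $f_j$ is algebraic.

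The main obstacle is this converse step. Two things must be justified there. First, $\phi$ should be injective with $\hat A$ free over $B$ on $E$, which needs depth $d$ and the regular sequence. Second, and more delicately, extracting coordinates must stay inside algebraic elements. The dual-basis/determinant approach requires that the $B$-coordinates of the algebraic element $y e_k$, which lies in the algebraic closure of $\bF_q(\bx)$, be expressible rationally in terms of a finite algebraic extension. I would try to make this precise by noting that $\bF_q(\bx)^{\rm alg}\cap\hat A$ is contained in $\bigoplus_j e_j\,\phi(B^{\rm alg})$, where $B^{\rm alg}$ is the henselization of $\bF_q[t]_{(t)}$. This is exactly the point where the paper's identification of henselizations with algebraic closures in completions should be invoked: $B^{\rm alg}\otimes E$ is the henselization of the algebraic closure of $A$ in $\hat A$, and it is finite over $B^{\rm alg}$.
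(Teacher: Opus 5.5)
Your reduction (basis of $E$, the tuple lemma, Salon's theorem) and your proof of ``automatic $\Rightarrow$ algebraic'' are fine and agree with the paper's Lemmas~\ref{Lem: free_mod} and~\ref{Lem: auto_seq_eqv_alg_summand}. One small fix: $\bF_q\not\subset E$ in general, so to get injectivity of $\phi$ apply uniqueness to $e\,\phi(f)$, where $e\in E$ satisfies $e\equiv 1$.

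\textbf{The gap is in ``algebraic $\Rightarrow$ automatic''.} There you must show that the $\hat B$-coordinates $\phi(f_j)$ of an algebraic $y$ are algebraic, i.e.\ $A^a\subseteq\bigoplus_j B^a e_j$, and your argument does not deliver this.

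\begin{enumerate}
\item \emph{The trace argument needs separability.} The trace/dual-basis computation only works when the trace form of $\hat A\otimes_{\hat B}\mathrm{Frac}(\hat B)$ over $\mathrm{Frac}(\hat B)$ is nondegenerate. This fails inside the hypotheses of the theorem. Take $A=\bF_q[s]$, $\fa=(s)$, $\bx=\{s^p\}$ and $E=\bigoplus_{j<p}\bF_q s^j$. Then $\bF_q((s))$ is purely inseparable of degree $p$ over $\bF_q((s^p))$, so the trace form vanishes identically.
\item \emph{The Cramer's-rule fallback is circular.} The entries of the matrix of multiplication by $y$ in the basis $e_j$ are exactly the $\hat B$-coordinates of $ye_1,\dots,ye_m$, which is the unknown itself. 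What you can certify as algebraic a priori are only the characteristic-polynomial coefficients. In the example above the characteristic polynomial is $T^p-y^p$, which carries no more information than the hypothesis on $y$.
\end{enumerate}

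\textbf{The closing appeal restates the goal.} Your final claim, that $\bF_q(\bx)^{\rm alg}\cap\hat A$ lies in $\bigoplus_j e_j\,\phi(B^{\rm alg})$, is precisely the key statement \Cref{Thm: free_mod_alg_version}. The justification offered is also garbled: $A^a$ is already henselian (it equals $A^h$), so nothing is ``the henselization of the algebraic closure''. The paper proves this step as follows:
\begin{enumerate}
\item $A^a=A^h$ and $B^a=B^h$, via Artin approximation in the normal local case, faithfully flat descent along the normalization, and splitting over the maximal ideals above $\fa$.
\item $A^h$ is finite over $B^h$. This uses an \'etale neighborhood $A'$ containing $\bx$, quasi-finiteness of $B\to A'$, and Stacks Tag 05WP.
\item Nakayama's lemma shows the $e_j$ generate $A^h$ over $B^h$, and their independence over $\hat B$ gives freeness.
\end{enumerate}
Nothing corresponding to the finiteness step appears in your proposal.

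\textbf{A field-theoretic repair.} If you prefer to stay field-theoretic, the gap can be closed differently, but the argument must actually be supplied.
\begin{enumerate}
\item $\mathrm{Frac}(\hat B)$ is separable over $\bF_q(\bx)$, because $\hat B$ is free over $\hat B^p$ on the monomials $x^v$ with $0\le v_i<p$. Hence it is regular over the relative algebraic closure $K_1$ of $\bF_q(\bx)$ in it.
\item Since $\hat A$ is reduced, the $K_1$-algebra $R$ generated by the $e_j$ and $y$ is a finite product of finite field extensions of $K_1$.
\item Therefore $R\otimes_{K_1}\mathrm{Frac}(\hat B)\to\hat A\otimes_{\hat B}\mathrm{Frac}(\hat B)$ is an isomorphism. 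This makes $e_1,\dots,e_m$ a $K_1$-basis of $R$ and puts the coordinates of $y$ in $K_1$.
\end{enumerate}
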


\begin{Rmk}
In \Cref{Thm: algebraic_eqv_automatic}, the condition ``$E$ is an $\bF_q$-vector subspace of $\hat{A}$" can be equivalently replaced by ``$E$ is an additive subgroup of $\hat{A}$". To see this, one direction is clear. As for the converse statement, first notice that every additive subgroup of $A$ is automatically an $\bF_p$-vector subspace, and then use the well-known fact that every $p$-automatic sequence is also a $q$-automatic sequence, see for example \cite{AS-auto-seqn}*{Theorem~6.6.4}. Since we employ the tools from commutative algebra in our proof, the above additive group restriction on $E$ is considered natural.
\end{Rmk}

Since we study sequence over a group $E$ other than a ring or field, the traditional reliance on Cartier operators or diagonals of multivariable rational functions are no longer applicable. Instead, a machinery from commutative algebra: the henselization of a pair, emerges in a natural way. We demonstrate that the algebraic closure $A^a$ of $A$ inside the completion $\hat{A}$ coincides precisely with the henselization $(A, \mathfrak{a})^h$ under the given depth conditions. By showing that $A^h$ is a free $B^h$-module (where $B$ is the subring generated by the regular sequence), we reduce the problem to the classical case over power series rings. Our technical approach indicate a link between Christol’s theorem and the foundational principles of commutative algebra.

The motivation for developing this broad framework is not merely structural, but driven by concrete open problems. A prime example is the study of $\beta$-expansions for formal Laurent series over $\bF_q$. 
The theory of $\beta$-expansions originates in the seminal work of R\'enyi, who introduced representations of real numbers in non-integer bases as a fundamental tool in dynamical systems and number theory. Since then, $\beta$-expansions have played important roles in the study of numeration systems, symbolic dynamics, and the arithmetic complexity of digit sequences. In positive characteristic, an analogous theory arises naturally for formal Laurent series over finite fields. Motivated by Christol’s theorem, the automaticity of these expansions has been a subject of intense study, yet a complete characterization remained elusive, leaving open questions posed such as in \cite{Hbaib-Mkaouar} and \cite{Scheicher-Sirvent}. Because the algebraic structures underlying these $\beta$-expansions are not comfortably handled by classical power series techniques, they serve as an ideal touchstone for our theory.

To be precise, we connect $\beta$-expansions to a special case of expansions in \Cref{Thm: exist_uniq_power_series}. Then the equivalence of automaticity and algebraicity of $\beta$-expansion becomes a special case of \Cref{Thm: algebraic_eqv_automatic}. In particular, our main theorem yields a complete characterization of automatic $\beta$-expansions for formal Laurent series over finite fields, thereby answering the questions raised in the works \cite{Hbaib-Mkaouar} and \cite{Scheicher-Sirvent}.

Beyond the applications established in this paper, we hope that the use of henselization in our approach may also provide a useful perspective for extending Christol-type theorems to rings that are not integral domains. Let $\bZ_p$ be the ring of $p$-adic integers, and $\sum_{u\in \bN^d} a_u x^u$ a power series over $\bZ_p$ of $d$ variables. Denef and Lipshitz proved that the reduction $\{a_n \mod p^s\}_{n\ge 0}$ is an automatic sequence over $\bZ_p/p^s\bZ_p$, where $s$ is an positive integer, if and only if $\sum_{u\in \bN^d} a_u x^u$ is congruent to an algebraic power series over $\bZ_p[x]$ modulo $p^s$. This result suggest a potential generalization of Christol's theorem to rings that are not domain (Denef and Lipshitz's result actually involves power series over $\bZ_p/p^{\alpha}\bZ_p$, which is not a domain). But in these rings, algebraic elements is not well defined, we believe henselization is a good substitution for the concept of algebraic elements.  

The paper is organized as follows. In \Cref{Sect: power_ser_exp} we review the relevant background on regular sequences and completions, which allows us to state and prove a refined version of \Cref{Thm: exist_uniq_power_series}. \Cref{Sect: automaticity}  introduces the notion of automatic sequence, and explains how \Cref{Thm: algebraic_eqv_automatic} is reduced to a key statement about algebraic closures (\Cref{Thm: free_mod_alg_version}). The proof of \Cref{Thm: free_mod_alg_version} occupies \Cref{Sect:  Henselization}, where we recall the henselization of a pair, show that under proper conditions, the henselization coincides with the algebraic closure inside the completion, and then establish the required module‑theoretic property using properties of henselian rings. Finally, we discuss some examples concerning \Cref{Thm: algebraic_eqv_automatic}, and prove a theorem which answers questions posed in \cite{Hbaib-Mkaouar} and \cite{Scheicher-Sirvent} about $\beta$-expansions for formal Laurent series over $\bF_q$. 

\section{Power series expansions}\label{Sect: power_ser_exp}
In this section, we first recall some background from commutative algebra. In particular, the facts about regular sequences, completion with respect to an ideal and the related power series expansion will be reviewed. After that, a refined version of Theorem \ref{Thm: exist_uniq_power_series} is proved. Throughout this paper, all rings are assumed to be commutative with identity, and all ideals are assumed to be proper. 

\subsection{Regular sequence and power series}
Let $A$ be a ring, the \emph{dimension} of $A$, written as $\dim A$, is the Krull dimension, i.e., the supremum of lengths of chains of prime ideals in $A$. Let $\fp$ be a prime ideal of $A$, the {\it height} of $\fp$, written $\height \fp$, is defined as the supremum of lengths of chains of prime ideals descending from $\fp$. For a general ideal $\fa$ of $A$ the height of $\fa$ is defined to be the minimum of heights of all prime ideals containing $\fa$, i.e., $\height \fa=\min\{\height \fp| \fp\supset \fa\}$. 

\begin{Defn}\label{Defn: reg_seq}
Let $M$ be an $A$-module. A sequence of elements $\{x_1,\dots, x_n\}$ of $A$ is said to be {\it $M$-regular} if 
\begin{itemize}
    \item[(i)] $M/(x_1,\dots,x_n)M\neq 0$; 
    \item[(ii)] for each $1\le i\le n$, $x_i$ is not a zero-divisor of $M/(x_1,\dots,x_{i-1})M$.  
\end{itemize}
When $M=A$, an $A$-regular sequence would simply be called a regular sequence. Moreover, if every element $x_i$ is contained in an ideal $\fa$, we say $\{x_1, \dots, x_n\}$ is an ($M$-)regular sequence in $\fa$. 
\end{Defn} 

Suppose $A$ is a Noetherian ring, and $M$ is a finite $A$-module. For an ideal $\fa$ in $A$ with $\fa M\neq M$, the {\it depth} of $\fa$ on $M$, written $\depth(\fa,M)$, is defined to be the supremum of lengths of $M$-regular sequences in $\fa$. In particular, for an ideal $\fa$, $\depth \fa:=\depth(\fa,A)$ is called the \emph{depth} of $\fa$. 

\begin{Lem}\label{Lem: properties_on_reg_seq} 
Let $A$ be a Noetherian ring, and $\fa$ an ideal of $A$. The following are true: 
\begin{enumerate}
    \item The depth of $\fa$ is less than or equal to its height;
    \item $\fa$ shares the same depth with its radical;
    \item Given an $A$-regular sequence $\bx$, and a flat homomorphism $\varphi: A\rightarrow B$. Let $\fb$ be the ideal generated by elements of $\varphi(\bx)$.  If $\fb\neq B$, then $\varphi(\bx)$ is a $B$-regular sequence. 
    \item $\depth(\fa, A)=\inf\limits_{\fp \supset \fa } \{\depth(\fp A_{\fp}, A_{\fp})\}$, where $\fp$ runs through all prime ideals in $A$ containing $\fa$.  
\end{enumerate}
\end{Lem}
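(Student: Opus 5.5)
These four facts are standard; the plan is to reduce each to the basic characterization of regular sequences through associated primes and Koszul/Ext vanishing, as in Matsumura or Bruns--Herzog, rather than to build anything new.

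For (1), I would argue by induction on the length $n$ of a regular sequence $x_1,\dots,x_n$ in $\fa$. The aim is to show $\height\fp\ge n$ for every prime $\fp\supset\fa$. Since $x_1$ is a nonzerodivisor, it lies in no associated prime, hence in no minimal prime of $A$. It follows that $\height(\fp/(x_1))\le \height\fp-1$ in $A/(x_1)$. Moreover, $x_2,\dots,x_n$ is regular on $A/(x_1)$, so induction gives $\height\fp\ge 1+(n-1)=n$. Taking the minimum over $\fp$ yields $\depth\fa\le\height\fa$.

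For (2), note that depth can be computed as $\min\{i:\mathrm{Ext}^i_A(A/\fa,A)\neq0\}$. It therefore suffices to show that $\fa$ contains a nonzerodivisor on a finite module $M$ if and only if $\sqrt{\fa}$ does, and then induct on the length of the sequence. A cleaner route is to reduce to this nonzerodivisor claim directly. If $\sqrt{\fa}$ contains an $M$-regular $x$, then $x^k\in\fa$ is still $M$-regular. Conversely, by prime avoidance, $\fa$ is contained in the union of $\mathrm{Ass}(M)$ iff $\fa\subset\fp$ for some associated prime $\fp$, iff $\sqrt{\fa}\subset\fp$. So the two ideals have nonzerodivisors on $M$ simultaneously. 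Replacing each $x_i$ by a power $x_i^{k_i}$ preserves regularity, so any regular sequence in $\sqrt{\fa}$ of length $n$ gives one in $\fa$ of length $n$. The reverse inequality is trivial since $\fa\subset\sqrt{\fa}$. The key input, that powers of a regular sequence remain regular, is a standard lemma.

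For (3), I would use flatness directly. Multiplication by $x_i$ on $A/(x_1,\dots,x_{i-1})$ is injective, and tensoring with the flat module $B$ preserves injectivity. We have $A/(x_1,\dots,x_{i-1})\otimes_A B\cong B/(\varphi(x_1),\dots,\varphi(x_{i-1}))$. Combined with the hypothesis $\fb\ne B$, this gives both conditions of \Cref{Defn: reg_seq}.

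For (4), localization at $\fp\supset\fa$ is flat, and $\fa A_\fp\subset\fp A_\fp$ is proper. So by (3), any regular sequence in $\fa$ stays regular in $\fp A_\fp$, giving $\le$. For $\ge$, I would use the Ext characterization: let $n=\depth\fa$, so $\mathrm{Ext}^n_A(A/\fa,A)\ne0$. Choose $\fp$ in its support. Ext localizes, and $\mathrm{Ext}^n_{A_\fp}(A_\fp/\fa A_\fp,A_\fp)\ne0$ forces $\depth(\fa A_\fp,A_\fp)\le n$. Since $\fa A_\fp\subset\fp A_\fp$ and depth is monotone in the ideal, this gives $\depth(\fp A_\fp)\ge$ the former; so the comparison needs care. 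The right tool is $\depth(\fp A_\fp,A_\fp)\le$ any $i$ with $\mathrm{Ext}^i_{A_\fp}(\kappa(\fp),A_\fp)\ne0$. I would therefore pick $\fp$ minimal over $\fa$ in the support of $\mathrm{Ext}^n$, and use the standard fact (Bruns--Herzog 1.2.10) that $\mathrm{Ext}^n(A/\fa,A)_\fp\neq0$ for such $\fp$ implies $\depth A_\fp\le n$. This last step is the main obstacle; I would simply cite the standard theorem there.
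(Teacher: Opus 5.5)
Your arguments for (1), (2), (3), and for the inequality $\depth\fa\le\depth(\fp A_\fp,A_\fp)$ for every prime $\fp\supset\fa$ in (4), are correct. They also go beyond the paper, which proves nothing here and simply cites Eisenbud (Prop.~18.2, Cor.~17.8), Bruns--Herzog and Matsumura for the four items. In (2) the Ext and prime-avoidance remarks are superfluous. What does the work is that $x_1^{k_1},\dots,x_n^{k_n}$ is again regular, with the $k_i$ chosen so that $x_i^{k_i}\in\fa$.

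For the reverse inequality in (4), however, you do not give an argument. Your Ext computation only bounds $\depth(\fa A_\fp,A_\fp)$, which says nothing about $\depth(\fp A_\fp,A_\fp)$. The step you call the main obstacle is then handed to Bruns--Herzog, Prop.~1.2.10, whose part (a) is essentially statement (4) itself. So this half is a citation rather than a proof. That puts you on the same footing as the paper, but the Ext detour contributes nothing.

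The missing idea is how to choose $\fp$:
\begin{enumerate}
\item Let $\bx=x_1,\dots,x_n$ be a maximal $A$-regular sequence in $\fa$. Then $\fa$ consists of zero-divisors on $A/(\bx)$.
\item By prime avoidance, $\fa\subset\fp$ for some $\fp\in\operatorname{Ass}(A/(\bx))$.
\item After localizing, $\bx$ is still $A_\fp$-regular by (3), since $(\bx)A_\fp\subset\fp A_\fp$ is proper. Moreover $\fp A_\fp\in\operatorname{Ass}(A_\fp/(\bx)A_\fp)$, so every element of $\fp A_\fp$ is a zero-divisor on $A_\fp/(\bx)A_\fp$ and $\bx$ cannot be extended inside $\fp A_\fp$.
\item Maximal regular sequences in an ideal of a Noetherian ring all have the same length, so $\depth(\fp A_\fp,A_\fp)=n$.
\end{enumerate}
In the language of your \emph{right tool}: by Rees's lemma, $\mathrm{Ext}^n_{A_\fp}(\kappa(\fp),A_\fp)\cong\operatorname{Hom}_{A_\fp}(\kappa(\fp),A_\fp/(\bx)A_\fp)$. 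This is nonzero precisely because $\fp$ was taken to be associated to $A/(\bx)$.

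The fact you attribute to Bruns--Herzog (that $\mathrm{Ext}^n_A(A/\fa,A)_\fp\neq0$ for $\fp$ minimal in its support forces $\depth A_\fp\le n$) is true. Proving it needs exactly this associated-prime input, since that localized Ext is $\operatorname{Hom}(A_\fp/\fa A_\fp,A_\fp/(\bx)A_\fp)$, a nonzero finite-length submodule of $A_\fp/(\bx)A_\fp$.
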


\begin{proof}
(1) See \cite{Eisenbud}*{Proposition 18.2}. (2) See \cite{Eisenbud}*{Corollary 17.8}. (3) Take $R=M=A$, $S=N=B$ in \cite{Cohen-Macaulay-rings}*{Proposition 1.2}. (4) See \cite{Matsumura-commutative-alg}*{(15.F), Proposition}.
\end{proof}

According to this lemma, if $A$ is local with maximal ideal $\fm$, then $\depth \fa\leq {\rm height\ }\fm=\dim A$.

\begin{Defn}\label{Defn: Cohen-Macaulay}
    Let $(A, \fm)$ be a Noetherian local ring, then it is called \emph{Cohen-Macaulay} if $\depth \fm=\dim A$. A general Noetherian ring $A$ is \emph{Cohen-Macaulay} if for every maximal ideal $\fm$, the corresponding localization $A_\fm$ is Cohen-Macaulay.  
\end{Defn}

We collect some useful properties of Cohen-Macaulay rings for later use. 
\begin{Prop}\label{Prop: CM_basic_prop}
    Let $(A, \fm)$ be a local Noetherian ring. 
    \begin{enumerate}
        \item If $A$ is regular, then it is Cohen-Macaulay. 
        \item If $A$ is regular, then any localization of $A$ at a prime ideal is also Cohen-Macaulay. 
        \item If $A$ is Cohen-Macaulay, then a set of elements $\{x_1, \dots, x_r\}$ is a regular sequence if and only if $\dim A/(x_1, \dots, x_r)=\dim A-r$. 
        \item If $A$ is Cohen-Macaulay, and $\{x_1, \dots, x_r\}$ is a regular sequence, then $A/(x_1, \dots, x_r)$ is also Cohen-Macaulay. 
        \item Let $\hat{A}$ be $\fm$-adic completion of $A$. Then $A$ is Cohen-Macaulay if and only if $\hat{A}$ is Cohen-Macaulay.
    \end{enumerate}
\end{Prop}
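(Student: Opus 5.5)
Proposition \ref{Prop: CM_basic_prop} collects standard facts about Cohen--Macaulay local rings. I plan to prove each item by reduction to well-known results in the standard references already used in the paper (Eisenbud, Matsumura, Bruns--Herzog), writing out argument only where the reduction is short.

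For (1), let $(A,\fm)$ be regular of dimension $d$ with regular system of parameters $x_1,\dots,x_d$. I would argue by induction on $d$. A regular local ring is a domain, so $x_1\neq 0$ is a nonzerodivisor. Moreover, $A/(x_1)$ is regular of dimension $d-1$. By induction, $x_2,\dots,x_d$ is then a regular sequence on $A/(x_1)$, so $\depth\fm\ge d$. Lemma \ref{Lem: properties_on_reg_seq}(1) gives $\depth\fm\le\dim A$, which forces equality.

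For (2), the key input is Serre's theorem that a localization of a regular local ring at a prime is again regular. This is the one nontrivial ingredient, proved via finite global dimension; I would cite it, e.g.\ \cite{Matsumura-commutative-alg}. Combined with (1), it gives (2).

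For (3), I would first use that in any Noetherian local ring a regular sequence $x_1,\dots,x_r$ satisfies $\dim A/(x_1,\dots,x_r)=\dim A-r$, since each nonzerodivisor lies in no minimal prime and so drops the dimension by exactly one. For the converse, assume $A$ is Cohen--Macaulay and $\dim A/(x_1,\dots,x_r)=\dim A-r$. Then $x_1,\dots,x_r$ extend to a system of parameters of $A$. In a Cohen--Macaulay local ring every system of parameters is a regular sequence: all associated primes are minimal (unmixedness), so $x_1$ avoids them and is a nonzerodivisor, and $A/(x_1)$ is again Cohen--Macaulay of dimension one less, so induction applies. This unmixedness step is the only real content, and I would quote \cite{Eisenbud}*{Theorem 17.4} for it.

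For (4), put $\bar A=A/(x_1,\dots,x_r)$. Its depth is $\depth\fm-r=\dim A-r$, because a maximal regular sequence of $A$ can be taken to start with $x_1,\dots,x_r$ (all maximal regular sequences have the same length). By (3), $\dim\bar A=\dim A-r$, so depth equals dimension and $\bar A$ is Cohen--Macaulay.

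For (5), $A\to\hat A$ is faithfully flat with $\dim\hat A=\dim A$ and $\fm\hat A=\hat\fm$. By Lemma \ref{Lem: properties_on_reg_seq}(3), a maximal $A$-regular sequence in $\fm$ stays $\hat A$-regular. Depth equality also follows from $\operatorname{Ext}^i_{\hat A}(\hat A/\hat\fm,\hat A)=\operatorname{Ext}^i_A(A/\fm,A)\otimes_A\hat A$, which vanishes exactly when the original does by faithful flatness. Hence $\depth$ and $\dim$ agree for $A$ and $\hat A$, and the equivalence follows.

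The main obstacle is (2), which genuinely requires Serre's homological characterization of regularity; I would simply cite it.
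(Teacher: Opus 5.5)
Your proposal is correct, and its overall strategy matches the paper's. The paper proves this proposition purely by citation: Hartshorne, II, Theorem~8.21A (or the cited pages of Matsumura) for (1)--(4), and Eisenbud for (5). Your sketches of (1), (3), (4), and your Ext/faithful-flatness argument for (5), are the standard textbook proofs behind those references.

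The one place you take a genuinely different route is (2). The source the paper cites (Hartshorne's 8.21A(b)) proves that \emph{every} Cohen--Macaulay local ring has Cohen--Macaulay localizations $A_\fp$. The argument is that in a Cohen--Macaulay local ring $\depth(\fp,A)=\height\fp$, and an $A$-regular sequence of that length in $\fp$ stays regular in $A_\fp$, whose dimension is $\height\fp$. So (2) follows from (1) with no homological algebra at all. Your route through Serre's theorem that regularity localizes is valid but heavier. Your closing remark that (2) ``genuinely requires'' Serre's homological characterization is therefore mistaken.

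There is also a small imprecision in (3). Knowing only that all associated primes are minimal is not quite enough, because a parameter element $x_1$ is only guaranteed to avoid primes $\fp$ with $\dim A/\fp=\dim A$. What you need is that every $\fp\in\operatorname{Ass}A$ satisfies $\dim A/\fp=\dim A$. This is the unmixedness statement in the Eisenbud theorem you quote, so your citation covers it.
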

\begin{proof}
    For (1)-(4), see for example \cite{Hartshorne}*{II, Theorem~8.21A}, or \cite{Matsumura-commutative-alg}*{(1). p. 121; (2) p.104; (3) p.105; (4) p. 104}. For (5), see \cite{Eisenbud}*{Proposition~18.8}.  
\end{proof}

For an ideal $\fa$ in $A$, the {\it associated graded ring of $A$ with respect to $\fa$} is defined by 
$$G_{\fa}(A):=\bigoplus_{i=0}^{\infty} \fa^i/\fa^{i+1}.$$ 
The multiplication in $G_{\fa}(A)$ is induced by the multiplication $\fa^i\times \fa^j\rightarrow \fa^{i+j}$. Similarly, we define the {\it associated graded module}  
$$G_{\fa}(M):=\bigoplus_{i=0}^{\infty}\fa^iM/\fa^{i+1}M.$$
One can check that $G_{\fa}(M)$ is a graded $G_{\fa}(A)$-module in the natural sense. 

Let $X_1,\dots, X_n$ be indeterminants and $A[X]\coloneqq A[X_1,\dots,X_n]$ the polynomial ring over $A$. The {\it polynomial ring over $M$} is defined to be the module 
$$M[X]:=M \otimes_A A[X].$$ 

Suppose $\fa$ is generated by $x_1,\dots,x_n$.  Then there is a graded ring homomorphism from $A[X]$ to $G_{\fa}(A)$ induced by the rule 
$$aX_1^{u_1}\cdots X_n^{u_n} \mapsto ax_1^{u_1}\cdots x_n^{u_n} \mod \fa^{|u|+1},$$ 
where for $u=(u_1,\dots,u_n)\in \bN^{n}$, set $|u|\coloneqq u_1+\cdots+u_n$. This homomorphism is surjective, and contains $\fa[X]$ in the kernel. Hence it induces a graded ring epimorphism from $A/\fa [X]$ to $ G_{\fa}(A)$.  Likewise there is a canonical epimorphism 
\begin{equation}\label{Eqn: poly_ring_to_grad_ring}
(M/\fa M) [X] \rightarrow G_{\fa}(M).
\end{equation} 
defined by 
$$\bar{m}\otimes X_1^{u_1}\cdots X_n^{u_n} \mapsto mx_1^{u_1}\cdots x_n^{u_n} \mod \fa^{|u|+1} M,$$ 
where for $\bar{m}\in M/\fa M$, $m$ is any element of $M$ whose reduction modulo $\fa M$ equals $\bar{m}$. 

\begin{Prop}\cite{Cohen-Macaulay-rings}*{ Theorem~1.1.8}\label{Prop: M_seq_isom_graded_mod}
Let $A$ be a ring, $M$ an $A$-module, $\bm{x}=\{x_1, \cdots, x_n\}$ an $M$-regular sequence, and $\fa=(\bm{x})=(x_1, \cdots, x_n)$. Then the induced map (\ref{Eqn: poly_ring_to_grad_ring}) is an isomorphism. In particular, if $\bx$ is $A$-regular, then $(A/\fa) [X]$ is isomorphic to $G_{\fa}(A)$. 
\end{Prop}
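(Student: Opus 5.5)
We prove \Cref{Prop: M_seq_isom_graded_mod} by induction on $n$. The map (\ref{Eqn: poly_ring_to_grad_ring}) is surjective and graded. So it suffices to show injectivity in each degree. Concretely, we must show: if $F(X)\in M[X]$ is homogeneous of degree $s$ and $F(\bx)\in \fa^{s+1}M$, then all coefficients of $F$ lie in $\fa M$. Following the standard approach (Bruns--Herzog), we first reduce this to the weaker statement
\[
(\ast)\qquad F\in M[X] \text{ homogeneous of degree } s,\ F(\bx)=0 \ \Longrightarrow\ F\in \fa M[X].
\]
Reduction to $(\ast)$: suppose $F(\bx)\in\fa^{s+1}M$. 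Each element of $\fa^{s+1}M$ can be written as $G(\bx)$, where $G$ is homogeneous of degree $s$ with coefficients in $\fa M$: write it as $\sum_{|u|=s+1} m_u x^u$ and absorb one factor $x_i$ into the coefficient. Then $(F-G)(\bx)=0$. By $(\ast)$, $F-G\in \fa M[X]$, hence $F\in\fa M[X]$.

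To prove $(\ast)$ we use induction on $n$. For $n=1$, the relation $m x_1^s=0$ together with $x_1$ being $M$-regular gives $m=0$. For $n>1$, we argue by induction on $s$. The case $s=0$ is trivial, since $F=m$ and $m=0$. For $s>0$, write $F=\sum_{|u|=s} m_u X^u$. Split $F=F_1 + X_n H$, where $F_1$ involves only $X_1,\dots,X_{n-1}$ and $H$ is homogeneous of degree $s-1$. Set $\bar M=M/x_nM$. The sequence $x_1,\dots,x_{n-1}$ need not be $\bar M$-regular in general; instead we pass to $M'=M/x_nM$ only after reordering. So it is cleaner to use the standard trick.

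First we prove, by induction on $n$, the auxiliary claim: $\bx$ is $M$-regular implies $x_n$ is a nonzerodivisor on $M/(x_1,\dots,x_{n-1})^jM$ for all $j$. This is itself deduced from the statement $(\ast)$ for $n-1$ elements, applied to $M$ with the regular sequence $x_1,\dots,x_{n-1}$. Indeed, $(\ast)$ for $n-1$ gives $G_{\fb}(M)\cong (M/\fb M)[X_1,\dots,X_{n-1}]$ with $\fb=(x_1,\dots,x_{n-1})$. Since $x_n$ is regular on $M/\fb M$, it is regular on each graded piece $\fb^jM/\fb^{j+1}M$, and hence, by induction on $j$, on $M/\fb^jM$.

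Now let $F(\bx)=0$ with $F$ homogeneous of degree $s$. Write $F=F_1(X_1,\dots,X_{n-1})+X_nH$. Then $x_nH(\bx)=-F_1(\bx)\in\fb^sM$. The auxiliary claim gives $H(\bx)\in \fb^{s}M\subseteq\fa^{s}M$. Since $\deg H=s-1$, the reduction step in degree $s-1$ applies: $H(\bx)\in\fa^{s}M$ forces $H\in\fa M[X]$ by induction on $s$. The remaining step is to write $H(\bx)=G(\bx)$ with $G$ homogeneous of degree $s-1$ in $X_1,\dots,X_{n-1}$ and coefficients in $\fb M$. Then $F_1+X_nG$ vanishes at $\bx$. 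Rewrite $x_nG(\bx)=G'(\bx)$ with $G'$ of degree $s$ in $X_1,\dots,X_{n-1}$, moving the factor $x_n$ into the coefficients. This gives $(F_1+G')(x_1,\dots,x_{n-1})=0$. Applying $(\ast)$ for $n-1$ elements yields $F_1+G'\in\fb M[X]$. Since $G'$ also has coefficients in $\fa M$, it follows that $F_1\in\fa M[X]$. Combined with $H\in\fa M[X]$, this gives $F\in\fa M[X]$.

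The main obstacle is the bookkeeping in this last step: correctly interleaving the induction on $n$ with the induction on the degree $s$, so that the auxiliary regularity claim for $x_n$ on $M/\fb^jM$ is available exactly when needed. The rest is routine. The final assertion follows by taking $M=A$.
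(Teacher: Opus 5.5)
Your argument is correct, and it is the standard proof of the result that the paper only quotes from Bruns--Herzog (Theorem~1.1.8; cf.\ also Matsumura, \emph{Commutative Ring Theory}, \S16): reduce to forms vanishing at $\bx$, induct on $n$ and on the degree, split off $X_n$, and use that $x_n$ is regular on every $M/(x_1,\dots,x_{n-1})^jM$, which follows from the graded isomorphism for $n-1$ elements. The only blemish is the abandoned aside about passing to $M/x_nM$ ``after reordering''; it plays no role in the proof and should be deleted.
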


\begin{Defn}
    Let $\bx=\{x_1,\dots, x_n\}$ be a sequence of elements in $A$. Let $\fa$ be the ideal generated by $\bx$. We say $\bx$ is {\it $M$-quasi-regular} if the induced canonical homomorphism (\ref{Eqn: poly_ring_to_grad_ring}) is an isomorphism.
\end{Defn}
\Cref{Prop: M_seq_isom_graded_mod} says that an $M$-regular sequence is $M$-quasi-regular. The converse holds under certain conditions.

\begin{Lem}\label{Lem: quasi-reg_eqv_reg}
Let $A$ be a Noetherian ring, and $\bx=\{x_1,\dots,x_n\}$ a sequence of elements in the Jacobson radical of $A$. 
Then $\bx$ is $A$-regular if and only if $\bx$ is $A$-quasi-regular.
\end{Lem}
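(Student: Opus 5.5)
One direction is already available: \Cref{Prop: M_seq_isom_graded_mod} says that an $A$-regular sequence is $A$-quasi-regular. So the work is the converse. Assume $\bx\subset J(A)$ is quasi-regular, so that $(A/\fa)[X]\to G_{\fa}(A)$ is an isomorphism. We must show two things: $x_1$ is a nonzerodivisor on $A$, and the images of $x_2,\dots,x_n$ in $A/(x_1)$ form a quasi-regular sequence. Then induction on $n$ finishes the proof. Condition (i) of \Cref{Defn: reg_seq} is automatic: $\fa\subset J(A)$ and $\fa$ is proper, so $A/\fa\neq 0$.

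\textbf{Step 1: $x_1$ is a nonzerodivisor.} Let $a\in A$ with $x_1a=0$, and suppose $a\neq 0$. Since $\fa\subset J(A)$ and $A$ is Noetherian, the Krull intersection theorem gives $\bigcap_k\fa^k=0$. So there is a largest $k$ with $a\in\fa^k$. Let $\bar a$ be the image of $a$ in $\fa^k/\fa^{k+1}$; it is nonzero. By quasi-regularity, $\bar a$ corresponds to a nonzero homogeneous polynomial $F$ of degree $k$ over $A/\fa$. Then $x_1a\in\fa^{k+1}$ maps to the class of $X_1F$ in degree $k+1$. Since $X_1$ is a nonzerodivisor in a polynomial ring, $X_1F\neq 0$. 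Hence $x_1a\notin\fa^{k+2}$, contradicting $x_1a=0$.

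The same argument proves a stronger statement that I need next: $(\fa^{k+1}:x_1)=\fa^{k}$ for all $k$. Indeed, if $x_1a\in\fa^{k+1}$ but $a\in\fa^j\setminus\fa^{j+1}$ with $j<k$, then the leading form argument gives $x_1a\notin\fa^{j+2}$, a contradiction.

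\textbf{Step 2: pass to $\bar A=A/(x_1)$.} Let $\bar\fa=\fa/(x_1)$, which is generated by the images of $x_2,\dots,x_n$. These images lie in $J(\bar A)$, and $\bar A$ is Noetherian. We have $\bar\fa^k/\bar\fa^{k+1}=\fa^k/(\fa^{k+1}+x_1A\cap\fa^k)$. Using the colon identity from Step 1, $x_1A\cap\fa^k=x_1\fa^{k-1}$. Therefore
\[
G_{\bar\fa}(\bar A)\cong G_{\fa}(A)/X_1G_{\fa}(A)\cong (A/\fa)[X_2,\dots,X_n].
\]
The map here is the canonical one, so $x_2,\dots,x_n$ is $\bar A$-quasi-regular. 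By induction it is $\bar A$-regular, and together with Step 1 this shows $\bx$ is $A$-regular.

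The main obstacle is the intersection identity $x_1A\cap\fa^k=x_1\fa^{k-1}$ in Step 2, since everything else is formal. I would first try to derive it from the colon identity together with Krull's intersection theorem, which is needed to reach a largest $j$ in the leading-form argument. This is precisely where the hypothesis that $\bx$ lies in the Jacobson radical and that $A$ is Noetherian gets used.
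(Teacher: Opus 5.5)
Your proof is correct. It is essentially the argument behind the reference the paper cites for the converse (Matsumura, \emph{Commutative Ring Theory}, Theorem~16.3 and the remark after it): Krull's intersection theorem plus leading forms make $x_1$ a nonzerodivisor, then $G_{\fa}(A)/\bar x_1G_{\fa}(A)\cong G_{\bar\fa}(\bar A)$ and induction. The paper itself just combines that citation with \Cref{Prop: M_seq_isom_graded_mod}.

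The identity $x_1A\cap\fa^k=x_1\fa^{k-1}$ that you flag as the main obstacle is in fact immediate from your colon identity, since $x_1b\in\fa^k$ gives $b\in(\fa^k:x_1)=\fa^{k-1}$. Also, Krull's theorem is needed only in Step~1, not for the colon identity, because an element $a\notin\fa^k$ already has finite $\fa$-adic order.
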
  

\begin{proof}
The implication $A$-regular $\Rightarrow$ $A$-quasi-regular is given by Lemma \ref{Prop: M_seq_isom_graded_mod}; for the converse, see \cite{Matsumura-commutative-ring-theory-1986}*{Theorem 16.3} and the remark below it. 
\end{proof}

Given a ring $A$ and an ideal $\fa$, the {\it $\fa$-adic completion of $A$}, or the {\it completion of $A$ at $\fa$}, is defined as the inverse limit
$$\hat{A}:=\varprojlim_{n} A/\fa^n A.$$ 
Denote by $\hat{\fa}:=\fa\hat{A}$ the $\fa$-adic completion of $\fa$. There is a canonical homomorphism of rings $A\rightarrow \hat{A}$ whose kernel is $\cap_{n=1}^{\infty} \fa^n$. Thus in the case $\cap_{n=1}^{\infty} \fa^n=(0)$ we consider $A$ as a subring of $\hat{A}$. If furthermore the canonical homomorphism $A \rightarrow \hat{A}$ is an isomorphism, then we say $A$ is {\it $\fa$-adically complete}. Notice that if $A$ is $\fa$-adically complete, then $A$ is Hausdorff with respect to the $\fa$-adic topology on $A$. 

Suppose $A$ is $\fa$-adically complete. For a sequence of elements $\{x_1,\dots, x_n\}$ in $\fa$, let $I$ be the ideal generated by $x_1,\dots,x_n$, and $E\subset A$ a set of representatives of $A/I$. Consider the formal series  
$$\sum_{u\in \bN^n} a_ux^u, \quad  \forall u\in\bN^d, a_u\in E.$$ 
For all integers $i\ge 0$, let  
$$y_i=\sum_{|u|\le i}a_ux^u$$ 
be the partial sums of the above series. Then $y_i$ converges to a unique element $y\in A$ with respect to the $\fa$-adic topology of $A$. We call $y=\sum_{u\in \bN^n} a_ux^u$ a {\it power series expansion of $y$ with respect to $\bx$ and $E$}. We show that the power series expansions exist. 

\begin{Lem}\label{Lem: exist_power_series} 
Let $A$ be an $\fa$-adically complete ring. Let $\bx$ be a sequence of elements in $\fa$, $I=(\bx)$ the ideal generated by $\bx$, and $E\subset A$ a set of representatives of $A/I$. 
Then every element $y\in A$ admits a power series expansion with respect to $\bx$ and $E$. 
\end{Lem}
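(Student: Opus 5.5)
The plan is a successive approximation argument: peel off one total degree of the expansion at a time, and let the $\fa$-adic completeness of $A$ supply convergence. The key observation is that for every $i\ge 0$, the power $I^i$ is generated by the monomials $x^u$ with $|u|=i$. Since $I\subseteq\fa$, we have $I^i\subseteq\fa^i$.

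Set $r_0=y$. Inductively I will construct elements $r_i\in I^i$ and coefficients $a_u\in E$ for $|u|<i$ such that
\begin{equation*}
y=\sum_{|u|<i}a_ux^u+r_i .
\end{equation*}
Given $r_i\in I^i$, write $r_i=\sum_{|u|=i}b_ux^u$ with $b_u\in A$, which is possible by the observation above. For each such $u$, let $a_u\in E$ be the unique representative with $b_u-a_u\in I$. Then $b_u-a_u=\sum_{j}c_{u,j}x_j$ for some $c_{u,j}\in A$. Hence
\begin{equation*}
r_{i+1}:=r_i-\sum_{|u|=i}a_ux^u=\sum_{|u|=i}\sum_j c_{u,j}x_jx^u\in I^{i+1}.
\end{equation*}
This completes the inductive step. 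The $b_u$ are not unique, but any choice will do, since only existence is claimed here.

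It remains to show convergence. By construction, $y-y_{i-1}=r_i\in I^i\subseteq\fa^i$, where $y_{i-1}=\sum_{|u|\le i-1}a_ux^u$. So the partial sums converge $\fa$-adically to $y$. By the discussion preceding the lemma, the series $\sum_u a_ux^u$ converges to a unique limit, because $A$ is complete and hence Hausdorff. That limit is $y$.

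I do not expect a real obstacle. The only points needing care are that $I^i$ is generated by degree-$i$ monomials, which holds because $I$ is finitely generated by $\bx$, and that limits are unique because completeness gives the Hausdorff property.
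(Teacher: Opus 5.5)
Your proposal is correct and follows essentially the same route as the paper's proof. You induct on total degree, write the remainder in $I^i$ as a combination of the monomials $x^u$ with $|u|=i$, replace each coefficient by its representative in $E$ to push the remainder into $I^{i+1}$, and conclude by $\fa$-adic convergence via $I^i\subseteq\fa^i$. You also make explicit the convergence and Hausdorff step that the paper leaves implicit in ``it is enough to show''.
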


\begin{proof}
It is enough to show that for all $i\ge 0$, there are $y_i=\sum_{|u|\le i}a_ux^u$ such that $y-y_i\in I^{i+1}$. We do this by induction on $i$. For $i=0$, since $E$ is a set of representatives of $A/I$, there exists $a_0\in E$ such that $y-a_0\in I$. Now for $i>0$, suppose we have found $y_{i-1}$. Since $y-y_{i-1}\in I^{i}$. And since $I^{i}$ is generated by $\{x^u: |u|=i\}$. One finds $y-y_{i-1}=\sum_{|u|=i}b_ux^u$ with $b_u\in {A}$ and $|u|=i+1$. Now choose $a_u \in E$ satisfies $a_u-b_u \in I$, and take $y_i=y_{i-1}+\sum_{|u|=i}a_ux^u$. Direct calculation verifies that $y-y_i\in I^{i+1}$. This completes the proof.  
\end{proof}

For an arbitrary sequence $\bx\subset \fa$ and a set of representatives $E$ of $\hat{A}/(\bx)$, there may exist different $\{a_u\}_{u\in\bN^d}$ and $\{b_u\}_{u\in\bN^d}$ such that $a_u, b_u\in E$ for all $u$, and
$$y=\sum_{u\in \bN^d}a_u x^u=\sum_{u\in \bN^d}b_u x^u.$$
However, the uniqueness of the power series expansion is fulfilled when $\bx$ is $A$-quasi-regular.  

\begin{Prop}\label{Prop: uniq_expan_eqv_quasi-reg}
With the notations and assumptions in \Cref{Lem: exist_power_series}, the following are equivalent: 

(1) for every $y\in A$, there exists a unique power series expansion of $y$ with respect to $\bx$ and $E$;  

(2) $\bx$ is $A$-quasi-regular. \\
If furthermore $A$ is Noetherian, then (1) and (2) are equivalent to 

(3) $\bx$ is $A$-regular. 
\end{Prop}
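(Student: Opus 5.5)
The plan is to prove (1)$\Leftrightarrow$(2) by comparing homogeneous pieces of expansions with the graded map (\ref{Eqn: poly_ring_to_grad_ring}) for $M=A$. Then (2)$\Leftrightarrow$(3) for Noetherian $A$ will follow from \Cref{Lem: quasi-reg_eqv_reg}. For that lemma we need the elements of $\bx$ to lie in the Jacobson radical. This holds because $A$ is $\fa$-adically complete and $\bx\subset\fa$: for $x\in\fa$, $1-x$ is invertible with inverse $\sum_i x^i$, which converges, so $\fa$ lies in the Jacobson radical. Throughout, $I=(\bx)$, and we first reduce uniqueness to a statement about zero. If $\sum a_ux^u=\sum b_ux^u$ with $a_u,b_u\in E$ and $a\neq b$, let $i$ be the least degree with $a_u\neq b_u$ for some $|u|=i$. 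Subtracting the common lower-degree part gives $\sum_{|u|\ge i}(a_u-b_u)x^u=0$, where $a_u-b_u\notin I$ whenever $a_u\neq b_u$, since $E$ is a set of representatives.

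\textbf{(2)$\Rightarrow$(1).} In the situation just described, the degree-$i$ part $\sum_{|u|=i}(a_u-b_u)x^u$ equals $-\sum_{|u|>i}(\cdots)$, which lies in $I^{i+1}$; the tail converges in $A$, and $I^{i+1}$ is closed since $A$ is Noetherian-free here, so I would instead argue with the truncations: every partial tail lies in $I^{i+1}$, and the limit lies in $\bigcap_n(I^{i+1}+\fa^n)$. To avoid closedness issues, I will use the following cleaner route. The element $\sum_{|u|=i}(a_u-b_u)x^u$ maps to $0$ in $I^i/I^{i+1}$ modulo $\fa^n$-errors for all $n$. Alternatively, since only finitely many $x^u$ with $|u|=i$ occur, we can write the tail $\sum_{|u|>i}c_ux^u=\sum_{|w|=i+1}x^w\cdot(\text{convergent series})$ by grouping terms. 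The convergent series exist in $A$ by completeness, so the tail lies in $I^{i+1}$ genuinely. Hence the polynomial $\sum_{|u|=i}\overline{(a_u-b_u)}X^u\in (A/I)[X]$ maps to zero in $I^i/I^{i+1}$. By quasi-regularity (injectivity) every coefficient $a_u-b_u\in I$, so $a_u=b_u$, a contradiction.

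\textbf{(1)$\Rightarrow$(2).} Surjectivity of (\ref{Eqn: poly_ring_to_grad_ring}) is automatic, so only injectivity needs proof. Suppose a homogeneous $\sum_{|u|=i}\bar c_uX^u$ maps to $0$, i.e.\ $z:=\sum_{|u|=i}c_ux^u\in I^{i+1}$; we may choose $c_u\in E$. Expand $z$ by \Cref{Lem: exist_power_series} adapted to start in degree $i+1$: since $z\in I^{i+1}$, the inductive construction there yields an expansion $z=\sum_{|u|\ge i+1}e_ux^u$. Choose $e_u=0_E$ for $|u|\le i$, where $0_E$ is the representative of the class $0$. One issue is that $0_E$ might not be $0$, which I would handle by translating. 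Replace the coefficient set by $E':=E-0_E$ conceptually, or note that uniqueness for $E$ implies uniqueness for $E'$, since $y\mapsto y+(\sum x^u)0_E$ shifts expansions bijectively. With $0\in E$, $z$ has two expansions: one with coefficients $c_u$ in degree $i$, and another vanishing in degree $\le i$. Uniqueness forces $c_u=0$, hence $\bar c_u=0$. Non-homogeneous kernel elements reduce to homogeneous ones by the grading.

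I expect the main obstacle to be the step of genuinely placing convergent tails inside $I^{i+1}$ without assuming $I^{i+1}$ is closed. The regrouping argument, which factors out a monomial of degree $i+1$ from each term and sums the finitely many resulting convergent series, is what I would rely on. A secondary subtlety is the $0\in E$ normalization in (1)$\Rightarrow$(2).
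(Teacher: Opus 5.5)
Your proof is correct and follows the paper's argument. For (2)$\Rightarrow$(1) you take the first degree $i$ where two expansions differ and use injectivity of $(A/I)[X]\to G_I(A)$ in degree $i$. For (1)$\Rightarrow$(2) you turn a degree-$i$ kernel element into two expansions that differ in degree $i$, completing them with the inductive construction started in degree $i+1$. Finally, (3) follows from the quasi-regular/regular criterion once $\fa$ is shown to lie in the Jacobson radical.

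The differences are only in the details. You explicitly justify, by regrouping the tail as $\sum_{|w|=i+1}x^w s_w$, that convergent tails lie in $I^{i+1}$. The paper uses this tacitly, which matters since $A$ is not assumed Noetherian. You also normalize $0\in E$ via the translation $y\mapsto y+0_E\sum_u x^u$, whereas the paper avoids this by perturbing the degree-$i$ digits of an expansion of an arbitrary $y$.
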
 

\begin{proof}
(1)$\Rightarrow$(2). Suppose every $y\in A$ has a unique power series expansion of $y$ with respect to $\bx$ and $E$. We show $\bx$ is $A$-quasi-regular. To the contrary suppose $\bx$ was not $A$-quasi-regular. Then the map $\varphi: (A/I)[X] \rightarrow G_I(A)$ is an epimorphism but not an isomorphism. Write 
$$(A/I)[X]=\bigoplus_{i=0}^\infty M_i,$$ 
where $M_i$ denotes the degree $i$ homogeneous component of $(A/I)[X]$. Noticing that $\varphi$ is a homogeneous homomorphism of graded rings, $\varphi$ maps $M_i$ onto the homogeneous component $I^i/I^{i+1}$ of $G_I(A)$. Since $\varphi$ is not an isomorphism, there exists an integer $i\ge 0$ such that $\varphi|_{M_i}: M_i\rightarrow I^i/I^{i+1}$ is not an isomorphism. So there exists a nonzero homogeneous polynomial $\sum_{|u|=i}\bar{b}_uX^u \in A/I[X]$ satisfying 
$$\varphi\big(\sum_{|u|=i}\bar{b}_uX^u \big)=\sum_{|u|=i}\bar{b}_u\bar{x}^{u}=0 \in I^i/I^{i+1}.$$ 
For fixed $y\in A$, recalling the inductive procedure in the proof of Lemma \ref{Lem: exist_power_series}, suppose we have found a partial sum  
$$y_{i}=\sum_{|u|\le i} a_ux^u, a_u\in E$$
with $y-y_i\in I^{i+1}$. Meanwhile for each $u$ with $|u|=i$, there exists a $c_u\in E$ such that the reduction of $c_u-a_u$ modulo $I$ equals $\bar{b}_u$. Notice that $c_u\neq a_u$ if $\bar{b}_u\neq 0$. Let 
$$y_i'=\sum_{|u|< i} a_ux^u+\sum_{|u|=i}c_ux^u.$$
Then $y_i\equiv y_i' \mod I^i$. So both $y-y_i$ and $y-y_i'$ are in $I^{i+1}$.  Again by the inductive procedure in the proof of Lemma \ref{Lem: exist_power_series}, we can find power series expansions for both $y-y_i$ and $y-y_{i}'$. Expansions $y-y_i$ and $y-y_i'$ add the partial sums $y_i$ or $y_i'$ respectively give different power series expansions of $y$ with respect to $\bx$ and $E$. This contradicts our uniqueness assumption on power series of $y$. Therefore $\bx$ is $A$-qausi-regular. 

(2)$\Rightarrow$(1). Next assume $\bx$ is $A$-quasi-regular. We show there exists a unique power series expansion of $y$ for all $y\in A$. Given $y\in A$, let $\sum_{u\in\bN^d} a_ux^u$ and $\sum_{u\in\bN^d}b_ux^u$ be two power series expansions of $y$ with respect to $\bx$ and $E$. We show $a_u-b_u=0$ for all $u\in\bN^d$. This will be done by induction on $|u|$. 

If $|u|=0$, then $a_0$ (or $b_0$) is the element of $E$ satisfying $y\equiv a_0 \mod I$ (or $y\equiv b_0 \mod I$). So $a_0=b_0$. Suppose $a_u=b_u$ for all $u$ with $|u|<i$. Then 
$$\sum_{|u|=i}a_ux^u-\sum_{|u|=i}b_ux^u\in I^{i+1}.$$
Let $\psi: A[X] \rightarrow G_I(A)$ be the epimorphism induced by $\bx$. Recall that $\varphi(\alpha X^u)=\bar{\alpha}x^u$ for $\alpha\in A$. We deduce that  
$$\varphi(\sum_{|u|=i}a_uX^u)=\varphi(\sum_{|u|=i}b_uX^u).$$
That is $\sum_{|u|=i}(a_u-b_u)X^u\in \ker \psi$. Since $\bx$ is $A$-quasi-regular, $\ker \psi=I[X]$. Thus $a_u-b_u\in I$ for all $u$ with $|u|=i$. But both $a_u$ and $b_u$ are elements of $E$. Hence $a_u=b_u$. This shows the uniqueness of power series expansions of $y$ with respect to $\bx$ and $E$. (1)$\Leftrightarrow$(2) is proved. 

Further assume $A$ is Noetherian. Since $A$ is $\fa$-adically complete, $\fa$ is contained in the Jacobson radical of $A$. So by Lemma \ref{Lem: quasi-reg_eqv_reg}, (2) $\Leftrightarrow$ (3). This finishes the proof of the lemma. 
\end{proof}

\subsection{Proof of Theorem \ref{Thm: exist_uniq_power_series}} In  this subsection, we will state and prove a refined version of \Cref{Thm: exist_uniq_power_series}. Still take $A$ to be an integral domain of finite type over $\bF_q$ of dimension $d$, with an ideal  $\fa$. The following fact is standard in commutative algebra, and a proof is given for the convenience of the reader. 

\begin{Lem}\label{Lem: height_quot_decomp}
Then the following are equivalent. 

\begin{itemize}
\item[(1)] $\height \fa=\dim A$; 
\item[(2)] $A/\fa$ is a finite ring; 
\item[(3)] $\fa=\fq_1\cdots\fq_s$, where $\fq_i$ are primary ideals whose radicals are different maximal ideals.
\end{itemize}
\end{Lem}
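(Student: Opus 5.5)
The plan is to use two standard facts about a domain $A$ of finite type over $\bF_q$. First, $A$ is catenary and equidimensional, so for every prime $\fp$ we have $\height\fp+\dim A/\fp=\dim A$. Second, by the Nullstellensatz, every maximal ideal $\fm$ has $A/\fm$ a finite field and $\height\fm=\dim A=d$. I will prove (1)$\Leftrightarrow$(2) and (2)$\Leftrightarrow$(3).

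\emph{(1)$\Rightarrow$(2).} Suppose $\height\fa=d$. Then every prime $\fp\supseteq\fa$ has $\height\fp\ge d$, so $\dim A/\fp=0$ by the dimension formula, and $\fp$ is maximal. Thus $A/\fa$ is a Noetherian ring of dimension $0$, hence Artinian, with finitely many maximal ideals $\fm_1,\dots,\fm_s$. It has finite length, and each composition factor is some $A/\fm_i$, which is a finite field. Therefore $A/\fa$ is finite.

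\emph{(2)$\Rightarrow$(1).} If $A/\fa$ is finite, then every prime $\fp\supseteq\fa$ gives a finite domain $A/\fp$, which is a field. So $\fp$ is maximal and $\height\fp=d$, and hence $\height\fa=\min\height\fp=d$. Note that $\fa$ is proper, so such primes exist.

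\emph{(2)$\Rightarrow$(3).} Take a primary decomposition $\fa=\fq_1\cap\cdots\cap\fq_s$ with distinct radicals, via Lasker--Noether. The associated primes contain $\fa$, so by the argument above they are maximal; since they are distinct, they are pairwise comaximal. Primary ideals with comaximal radicals are comaximal, because some power of $\fm_i+\fm_j=A$ lies in $\fq_i+\fq_j$. For pairwise comaximal ideals the intersection equals the product, which gives $\fa=\fq_1\cdots\fq_s$.

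\emph{(3)$\Rightarrow$(2).} Given $\fa=\fq_1\cdots\fq_s$ with $\sqrt{\fq_i}=\fm_i$ maximal and distinct, the $\fq_i$ are pairwise comaximal. The Chinese remainder theorem then gives $A/\fa\cong\prod A/\fq_i$. Each $A/\fq_i$ is local Artinian with residue field $A/\fm_i$, which is finite. Since $\fm_i^n\subseteq\fq_i$ for some $n$, this ring has finite length with finite composition factors, so it is finite, and hence $A/\fa$ is finite.

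The only nonformal input, and the main point to cite carefully, is the dimension formula $\height\fp+\dim A/\fp=\dim A$ for affine domains. I would cite it from Matsumura or Eisenbud (Noether normalization) rather than prove it. Everything else is routine.
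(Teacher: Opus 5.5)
Your proof is correct and follows essentially the same route as the paper: the dimension formula forces every prime containing $\fa$ to be maximal, so $A/\fa$ is Artinian and hence finite, and (3) comes from a primary decomposition whose radicals are maximal. Your comaximality and Chinese remainder argument, which shows $\fq_1\cap\cdots\cap\fq_s=\fq_1\cdots\fq_s$ and that each $A/\fq_i$ is finite, is a useful addition: the paper writes the primary decomposition directly as a product and argues through heights without justifying that identification.
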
 

\begin{proof}
(1)$\Leftrightarrow$(2). Since $\dim A=\height \fa+ \dim A/\fa$, one sees that $\height \fa=\dim A$ if and only if $\dim A/\fa=0$. The latter happens exactly when $A/\fa$ is Artinian. This is equivalent to saying $A/\fa$ is finitely generated as an $\bF_q$-module, in other words, $A/\fa$ is a finite ring. 

(2)$\Leftrightarrow$(3). Let  $\fa=\fq_1\cdots\fq_s$ be a primary decomposition of $\fa$ such that the radicals of $\fq_i$ are different prime ideals. Then $\height \fa=\dim A$ if and only if the minimum of heights of radicals of $\fq_i$ is $\dim A$, if and only if all radicals of $\fq_i$ are maximal.   
\end{proof}

Let us recall the setups in \Cref{Sect: intro} with more details. Let $\hat{A}$ and $\hat{\fa}$ be $\fa$-adic completions of $A$ and $\fa$ respectively. Then $\hat{A}$ is neotherian, $\hat{\fa}$ is contained in the Jacobson radical of $\hat{A}$, and the canonical map $A\rightarrow \hat{A}$ is injective and flat (\cite{Atiyah-Macdonald}*{Proposition~10.14, 10.15(d) and Corollary~10.18}). Identify $A$ with its image in $\hat{A}$, for a sequence of elements $\bx=\{x_1,\dots,x_n\}$ in $\hat{\fa}$, let $E\subset \hat{A}$ be a set of representatives of $\hat{A}/(\bx)$. For our generalization of Christol's theorem---in particular, to define $q$-automatic sequences over $E$ in \Cref{Sect: automaticity}---we require $E$ to be finite. Thus $\hat{A}/I$ must be a finite ring, where $I=(\bx)$. So $A/\fa\cong \hat{A}/\hat{\fa}\cong (\hat{A}/I)/(\fa/I)$ are finite and $\height \fa=d$ due to \Cref{Lem: height_quot_decomp}. For this reason, we additionally assume $\height \fa=d$. 

\begin{Lem}\label{Lem: dim_height_depth_in_completion}
Under the assumption $\height \fa=d$, we have 
\begin{enumerate}
    \item $\dim \hat{A}=\height \hat{\fa}=d$; 
    \item every $A$-regular sequence in $\fa$, seen as a sequence in $\hat{\fa}$, is $\hat{A}$-regular. 
    \item $\depth \fa=d$ if and only if $\depth \hat{\fa}=d$. 
\end{enumerate}
\end{Lem}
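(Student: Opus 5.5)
The plan is to reduce everything to the local rings at the finitely many maximal ideals containing $\fa$. By \Cref{Lem: height_quot_decomp}, $\height\fa=d$ gives $\fa=\fq_1\cdots\fq_s$ with $\fq_i$ primary to distinct maximal ideals $\fm_1,\dots,\fm_s$. The key structural step, and the one I expect to be the main obstacle, is the identification
$$\hat A\;\cong\;\prod_{i=1}^s \widehat{A_{\fm_i}},$$
where $\widehat{A_{\fm_i}}$ is the $\fm_i$-adic completion of the local ring $A_{\fm_i}$.

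To prove this identification I proceed in three steps.
\begin{itemize}
\item For each $n$, the ideals $\fq_i^n$ are pairwise comaximal, since their radicals are distinct maximal ideals. The Chinese remainder theorem then gives $A/\fa^n=A/\prod_i\fq_i^n\cong\prod_i A/\fq_i^n$.
\item Since $\fm_i^{k}\subset\fq_i\subset\fm_i$ for some $k$, the $\fq_i$-adic and $\fm_i$-adic filtrations are cofinal.
\item Since $A/\fq_i^n$ is already local, $A/\fq_i^n\cong A_{\fm_i}/\fq_i^nA_{\fm_i}$.
\end{itemize}
Passing to the inverse limit yields the displayed isomorphism. Under it, the maximal ideals of $\hat A$ are $\fn_i=\fm_i\hat A$, one for each $i$, and $\hat A_{\fn_i}\cong\widehat{A_{\fm_i}}$.

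For (1), every maximal ideal of the finite type domain $A$ has height $d$, so $\dim A_{\fm_i}=d$. A Noetherian local ring and its completion have the same dimension, so $\dim\hat A=\max_i\dim\widehat{A_{\fm_i}}=d$. The same equality shows that each $\fn_i$ has height $d$. The radical of $\hat\fa$ is $\fn_1\cap\dots\cap\fn_s$: this holds because $\hat\fa$ lies in the Jacobson radical and $\hat A/\hat\fa\cong A/\fa$ is finite. Hence the only primes containing $\hat\fa$ are the $\fn_i$, and $\height\hat\fa=d$.

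For (2), let $\bx$ be an $A$-regular sequence in $\fa$. The map $A\to\hat A$ is flat, and $(\bx)\hat A\subset\hat\fa$ lies in the Jacobson radical, so it is proper. Then \Cref{Lem: properties_on_reg_seq}(3) shows that $\bx$ is $\hat A$-regular.

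For (3), one direction is immediate. If $\depth\fa=d$, take a regular sequence of length $d$ in $\fa$. By (2) it is $\hat A$-regular in $\hat\fa$, so $\depth\hat\fa\ge d$. On the other hand, \Cref{Lem: properties_on_reg_seq}(1) together with (1) gives $\depth\hat\fa\le\height\hat\fa=d$.

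For the converse I use \Cref{Lem: properties_on_reg_seq}(4) on both sides. On the side of $A$, the primes containing $\fa$ are the $\fm_i$, so
$$\depth\fa=\min_i\depth\fm_iA_{\fm_i}.$$
On the side of $\hat A$, the primes containing $\hat\fa$ are the $\fn_i$, so
$$\depth\hat\fa=\min_i\depth\fn_i\hat A_{\fn_i}=\min_i\depth\widehat{A_{\fm_i}}.$$
It then suffices to know that depth of the maximal ideal is preserved by passing to the completion of a Noetherian local ring. This holds because $\widehat{A_{\fm_i}}$ is faithfully flat over $A_{\fm_i}$ with residue field fiber, so regular sequences and $\operatorname{Ext}$-vanishing transfer in both directions; this is the depth analogue of \Cref{Prop: CM_basic_prop}(5). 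The two minima are therefore equal, which proves (3).
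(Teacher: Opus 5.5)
Your proposal is correct and follows essentially the same route as the paper. Both arguments use the decomposition $\hat A\cong\prod_i\widehat{A_{\fm_i}}$ for (1), flatness together with \Cref{Lem: properties_on_reg_seq}(3) for (2), and \Cref{Lem: properties_on_reg_seq}(1),(4) for (3). The only small difference is in the converse of (3). There you use that depth is unchanged under completion of a Noetherian local ring, whereas the paper uses the Cohen--Macaulay version, \Cref{Prop: CM_basic_prop}(5). This is the same mechanism, and it actually gives $\depth\fa=\depth\hat\fa$ in general. You also make explicit the use of \Cref{Lem: properties_on_reg_seq}(4) on the $\hat A$ side, which the paper leaves implicit when it equates $\depth\hat\fa=d$ with $\hat A$ being Cohen--Macaulay.
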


\begin{proof}
(1) Let $\fa=\prod_{i=1}^s \fq_i$ be the primary decomposition of $\fa$ as stated in \Cref{Lem: height_quot_decomp}. Let $\fm_i$ be the radical of $\fq_i$. Let $\hat{A}_i$ be the completion of $A_i$ at $\fm_iA_i$. 
Then 
$$\hat{A}=\varprojlim_{n}A/\fa^n \cong \varprojlim_{i} \prod_{i=1}^s A/\fq_i^n 
\cong \prod_{i=1}^s \varprojlim_{n} A/\fq_i^n \cong \prod_{i=1}^s \varprojlim A/\fm_i^n 
\cong \prod_{i=1}^s \hat{A}_i.$$
Since $\dim A=d$, we have $\dim \hat{A}_i=d$ for all $1\le i\le s$. It follows that $\dim \hat{A}=d$. 

By \Cref{Lem: height_quot_decomp}, $\height \fa=d$ implies that $\hat{A}/\hat{\fa}$ ($\cong A/\fa$) is a finite ring. So all prime ideals containing $\hat{\fa}$ are maximal. Meanwhile all maximal ideals of $\hat{A}$ are of the form $\fm_i\hat{A}$. We have $\height \fm_i\hat{A}=\height \fm_i\hat{A}_i=d$. That is, heights of all maximal ideals of $\hat{A}$ are $d$. Thus $\height \hat{\fa}=d$. 

(2) Note that $\hat{A}$ is a flat $A$-algebra. Then (2) follows from \Cref{Lem: properties_on_reg_seq} (3). 

(3) Obviously (2) implies $\depth \fa\le \depth \hat{\fa}$. This together with $\depth \hat{\fa}\le \height \hat{\fa}=d$ deduce $\depth \fa=d \Rightarrow \depth \hat{\fa}=d$. Conversely, suppose $\depth \hat{\fa}=d$, this is equivalent to say $\hat{A}$ is Cohen-Macaulay. Then $\hat{A}_i$, which coincides with the localization of $\hat{A}$ at $\fm_i \hat{A}$, is also Cohen-Macaulay for any $1\le i\le r$. By \Cref{Prop: CM_basic_prop} (5), $A_i$ are Cohen-Macaulay for all $1\le i\le r$, i.e., $\depth (\fm_i A_i, A_i)=d$. Now $\depth \fa=d$ follows from \Cref{Lem: properties_on_reg_seq} (4).  
\end{proof}

\begin{Lem}\label{Lem: max_reg_seq_eqv_fin_quot}
Let $\bx=\{x_1,\dots, x_n\}$ be a $\hat{A}$-regular sequence in $\hat{a}$, and $I=(\bx)$ the ideal generated by $\bx$. Then $\hat{A}/I$ is a finite ring if and only if $n=d$.  
\end{Lem}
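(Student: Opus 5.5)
The idea is to reduce finiteness of $\hat{A}/I$ to a dimension count, using \Cref{Lem: dim_height_depth_in_completion}(1) and the product decomposition $\hat{A}\cong\prod_{i=1}^s\hat{A}_i$ from its proof. Each $\hat{A}_i$ is a complete Noetherian local ring of dimension $d$ with finite residue field $A/\fm_i$.

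First I would record the following standard fact. If $(R,\fm)$ is a Noetherian local ring, $J\subset\fm$ is an ideal and $R/\fm$ is finite, then $R/J$ is finite if and only if $\dim R/J=0$. Indeed, $\dim R/J=0$ means $R/J$ is Artinian local, so $\fm^k\subset J$ for some $k$. Each $\fm^i/\fm^{i+1}$ is then a finite-dimensional vector space over the finite field $R/\fm$, which makes $R/J$ finite. Conversely, a finite ring has dimension $0$.

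Next I would pass to the local factors. Since $\bx\subset\hat{\fa}$, which lies in the Jacobson radical, the ideal $I$ decomposes as $\prod_i I\hat{A}_i$ with each $I\hat{A}_i$ proper. Hence $\hat{A}/I\cong\prod_i\hat{A}_i/I\hat{A}_i$. Localization is flat, so by \Cref{Lem: properties_on_reg_seq}(3) the image of $\bx$ in each $\hat{A}_i$ is $\hat{A}_i$-regular. In a Noetherian local ring, quotienting by a nonzerodivisor in $\fm$ drops the dimension by exactly one (Krull's principal ideal theorem together with the fact that a nonzerodivisor avoids all minimal primes). Applying this $n$ times gives $\dim\hat{A}_i/I\hat{A}_i=d-n$. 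Note that this argument does not need the Cohen--Macaulay property, so \Cref{Prop: CM_basic_prop}(3) is not required.

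Combining the two steps: $\hat{A}/I$ is finite if and only if every factor $\hat{A}_i/I\hat{A}_i$ is finite, if and only if $d-n=0$, that is, $n=d$.

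The step I expect to need the most care is the reduction to local factors, namely checking that $I\hat{A}_i$ is proper and that regularity passes to each factor. Both follow from $\bx\subset\hat{\fa}\subset\fm_i\hat{A}_i$.
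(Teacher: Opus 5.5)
Your proof is correct, but it follows a different route from the paper's.

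\textbf{The paper's route.} The paper argues globally in $\hat{A}$, using heights and radicals.
\begin{itemize}
\item If $\hat{A}/I$ is finite, then all primes over $I$ are maximal, so $\height I=d$. Krull's height theorem then gives $d\le n$, while $n\le\depth\hat{\fa}\le\height\hat{\fa}=d$.
\item If $n=d$, the paper asserts that every prime over $I$ is maximal and deduces $\sqrt{I}=\sqrt{\hat{\fa}}$. Hence $\hat{\fa}^m\subset I$ for some $m$. It concludes because $\hat{A}/\hat{\fa}^m\cong A/\fa^m$ is finite by \Cref{Lem: height_quot_decomp}.
\end{itemize}

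\textbf{Your route.} You pass to the local factors $\hat{A}_i$. There you compute $\dim\hat{A}_i/I\hat{A}_i=d-n$ exactly, by iterating the fact that a nonzerodivisor in the maximal ideal lowers the dimension by one (e.g.\ Atiyah--Macdonald, Corollary~11.18). You then use the criterion that a quotient of a Noetherian local ring with finite residue field is finite exactly when it has dimension $0$.

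\textbf{What your version buys.} A single computation settles both implications, and it gives $n\le d$ without appealing to depth. More substantively, it justifies a step the paper's converse leaves implicit: that every prime containing $I$ is maximal when $n=d$. This is exactly your count $\dim\hat{A}_i/I\hat{A}_i=0$. As you observe, it uses only the regularity of $\bx$, not the Cohen--Macaulay property.

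\textbf{What the paper's version buys.} It gets the inequality $d\le n$ from the number of generators of $I$ alone. It also obtains finiteness by exhibiting $\hat{A}/I$ as a quotient of the concrete finite ring $A/\fa^m$, rather than through the graded pieces $\fm^i/\fm^{i+1}$.
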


\begin{proof} 
Obviously $n\le \depth \hat{\fa} \le \height \hat{\fa}=d$. Suppose $\hat{A}/I$ is a finite ring. Then every prime ideal containing $I$ is maximal. So by the same arguments in the proof of \Cref{Lem: dim_height_depth_in_completion}, we deduce that $\height I=d$. Meanwhile, $\height I\le n$ by \cite{Atiyah-Macdonald}*{Corollary 11.16}. So $d\le n$. Now we conclude that $n=d$. 

Conversely suppose $n=d$. For an integer $m>0$, $\hat{A}/\hat{\fa}^m\cong A/\fa^m$. By the fact that $\height \fa^m=\height \fa=d$ and \Cref{Lem: height_quot_decomp}, we derive that $A/\fa^m$ is a finite ring. Hence it is reduced to showing that $\hat{\fa}^m\subset (\bx)$ for some positive integer $m$. It is easy to see both $I$ and $\hat{\fa}$ are contained in Jacobson radical of $\hat{A}$. In the mean time, every prime ideal containing $I$ or $\hat{\fa}$ is maximal. Hence both radicals of $I$ and $\hat{\fa}$ are $\prod_{i=1}^s \fm_i\hat{A}$. Thus $\hat{\fa}^m\subset (\bx)$ for certain $m>0$. This ends the proof of this lemma.   
\end{proof}

Summarizing all the above, we give the refined version of \Cref{Thm: exist_uniq_power_series}.

\begin{Thm}\label{Thm: exist_uniq_power_series_refined}
Let $\bx=\{x_1,...,x_n\}$ be a sequence of elements in $\hat{\fa}$. Write $I=(\bx)$. Let $E$ be a set of representatives of $\hat{A}/I$. Then each $y\in \hat{A}$ admits a power series expansion with respect to $\bx$ and $E$. Furthermore the following three statements are equivalent: 

(1) $E$ is finite and for any $y\in \hat{A}$, there exists a unique power series expansion of $y$ with respect to $\bx$ and $E$; 

(2) $\bx$ is a $\hat{A}$-quasi-regular and $n=d$; 

(3) $\bx$ is $\hat{A}$-regular and $n=d$.
\end{Thm}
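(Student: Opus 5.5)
Existence is exactly \Cref{Lem: exist_power_series} applied to the $\hat{\fa}$-adically complete ring $\hat{A}$ and the sequence $\bx\subset\hat{\fa}$. For the equivalences, the plan is to reduce everything to \Cref{Prop: uniq_expan_eqv_quasi-reg} and \Cref{Lem: max_reg_seq_eqv_fin_quot}, with the finiteness of $E$ the only new ingredient. Since $E$ is in bijection with $\hat{A}/I$, ``$E$ finite'' is the same as ``$\hat{A}/I$ is a finite ring''.

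(2)$\Leftrightarrow$(3) comes directly from \Cref{Lem: quasi-reg_eqv_reg}. Indeed, $\hat{A}$ is Noetherian, and $\bx\subset\hat{\fa}$ lies in the Jacobson radical of $\hat{A}$, as recalled before \Cref{Lem: dim_height_depth_in_completion}.

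(3)$\Rightarrow$(1). Assume $\bx$ is $\hat{A}$-regular with $n=d$. By \Cref{Lem: max_reg_seq_eqv_fin_quot}, $\hat{A}/I$ is finite, hence $E$ is finite. Uniqueness of expansions then follows from \Cref{Prop: uniq_expan_eqv_quasi-reg} (3)$\Rightarrow$(1), applied to the complete Noetherian ring $\hat{A}$.

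(1)$\Rightarrow$(3). Uniqueness of expansions gives, by \Cref{Prop: uniq_expan_eqv_quasi-reg}, that $\bx$ is $\hat{A}$-quasi-regular, hence $\hat{A}$-regular. Finiteness of $E$ means $\hat{A}/I$ is finite, and \Cref{Lem: max_reg_seq_eqv_fin_quot} then forces $n=d$.

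The main point to check is that \Cref{Prop: uniq_expan_eqv_quasi-reg} really applies. It was stated for a ring complete with respect to an ideal $\fa$ containing $\bx$, while the expansion itself is taken $I$-adically with $I\subseteq\hat{\fa}$. Convergence and the inductive construction in \Cref{Lem: exist_power_series} only use that $I^{i}\subseteq\hat{\fa}^i$ and that $\hat{A}$ is $\hat{\fa}$-adically complete. For the uniqueness direction, I would verify that two expansions representing the same element agree modulo $I^{i+1}$ at each stage. This requires that $\sum_{|u|>i}a_ux^u$ converges $\hat{\fa}$-adically into the closure of $I^{i+1}$, which equals $I^{i+1}$ because ideals of the Noetherian ring $\hat{A}$ are closed in the $\hat{\fa}$-adic topology (Krull). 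With this remark, all cited results apply verbatim, and the proof is essentially a bookkeeping assembly.
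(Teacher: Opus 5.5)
Your proposal is correct and is essentially the paper's proof. Existence comes from \Cref{Lem: exist_power_series}, uniqueness $\Leftrightarrow$ quasi-regularity $\Leftrightarrow$ regularity from \Cref{Prop: uniq_expan_eqv_quasi-reg} (via \Cref{Lem: quasi-reg_eqv_reg}), and finiteness of $E$ $\Leftrightarrow$ $n=d$ from \Cref{Lem: max_reg_seq_eqv_fin_quot}. Your extra remark is worthwhile: ideals of the Noetherian ring $\hat{A}$ are closed in the $\hat{\fa}$-adic topology, by Krull's intersection theorem since $\hat{\fa}$ lies in the Jacobson radical, and this justifies a step the paper's uniqueness argument uses only implicitly.
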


\begin{proof}
The existence of power series expansion with respect to $\bx$ and $E$ follows from \Cref{Lem: exist_power_series}. Applying \Cref{Prop: uniq_expan_eqv_quasi-reg} to $\hat{A}$ and $\hat{\fa}$, we derive that for any $y\in \hat{A}$, there exists a unique power series expansion of $y$ with respect to $\bx$ and $E$ if and only if $\bx$ is $\hat{A}$-quasi-regular, if and only if $\bx$ is $\hat{A}$-regular. Then by \Cref{Lem: max_reg_seq_eqv_fin_quot}, (1), (2) and (3) are equivalent. 
\end{proof}

\begin{Rmk}
 \Cref{Thm: exist_uniq_power_series} follows immediately from \Cref{Thm: exist_uniq_power_series_refined} and \Cref{Lem: dim_height_depth_in_completion}. {Actually we see that to ensure the finiteness of $E$ and uniqueness of the expansions in \Cref{Thm: exist_uniq_power_series}, our assumption $\depth \fa=d$ is necessary.}
\end{Rmk}

\section{automaticity and algebraicity}\label{Sect: automaticity}
In this section we first recall basic definitions and properties $q$-automatic sequences. After that, we explain how to interpret algebraicity of elements in $\hat{A}$ with respect to a fixed regular sequence and a set of representatives. By establishing several auxiliary facts we reduce the proof of \Cref{Thm: algebraic_eqv_automatic} to a key result (\Cref{Thm: free_mod_alg_version}), whose proof is postponed to \Cref{Sect:  Henselization}.
\subsection{Automatic sequences}

Let $\Delta$ be a finite set. Roughly speaking, a ($d$-dimensional) automatic sequence over $\Delta$ is a sequence generated by a deterministic finite automaton with output (DFAO). More concretely, for a positive integer $k$, let $\Sigma_k:=\{0,\dots,k-1\}$. A {\it $d$-dimensional $k$-DFAO} is a $6$-tuple $M=(Q, \Sigma_k^d, \delta, q_0, \Delta, \tau)$ where 
\begin{itemize}
    \item[] $Q$ is a finite set of states, 
    \item[] $\delta: Q \times \Sigma_k^d\rightarrow Q$ is the transition function, 
    \item[] $q_0\in Q$ is the initial state, 
    \item[] $\Delta$ is the output alphabet, 
    \item[] $\tau: Q \rightarrow \Delta$ is the output function.
\end{itemize}
For a state $q \in Q$ and a word $w=w_1\cdots w_n$ on $\Sigma_k^d$, we recursively define 
$$\delta(q,w)=\delta(\delta(q,w_1\cdots w_{n-1}),w_n).$$ 
For any $u=(u_1,\dots,u_d) \in \bN^d$, 
let $h:=\max\{\lfloor \log_k u_1 \rfloor, \dots, \lfloor \log_k u_d \rfloor\}$. That is, $u_i< k^{h+1}$ for all $1\le i\le d$, and $k^h\le u_i$ for some $i$. We view $u$ as a word on $\Sigma_k^d$ in the following way. Write each $u_i$ as 
$$u_i=\sum_{j=0}^{h} w_j(u_i)k^j,$$ 
where $w_j(u_i)\in \Sigma_k$. By our choice of $h$, such expressions are unique. Letting 
$$w_j(u):=(w_j(u_1),\dots, w_j(u_d))\in \Sigma_k^d,$$  
then $u$ is uniquely corresponding to the word $w(u):=w_h(u)\cdots w_0(u)$. With this correspondence, a $d$-dimensional $k$-DFAO $(Q, \Sigma_k^d, \delta, q_0, \Delta, \tau)$ generates an $d$-dimensional sequence on $\Delta$:  
$$\{\tau(\delta(q_0, w(u)))\}_{u\in\bN^d}.$$ 

\begin{Defn}\label{Defn: automatic_sequence}
A $d$-dimensional sequence $\{a_u\}_{u\in \bN^d}$ over a finite set $\Delta$ is called {\it $k$-automatic} if it is generated by a $d$-dimensional $k$-DFAO. A $d$-dimensional $k$-automatic sequence can also be characterized by its $k$-kernels.
\end{Defn}

\begin{Defn}
Given a sequence $\cS=\{a_u\}_{u\in \bN^d}$, integers $l\ge 0$ and $0\le v_i<k^l$ ($1\le i\le d$),  for any $u=(u_1,\dots,u_d)\in \bN^d$, let 
\begin{equation}\label{k-kernel}
b_u:=b_{(u_1,\dots,u_d)}=a_{(k^l u_1+v_1,\dots,k^l u_d+v_d)}.
\end{equation}
Then $\{b_u\}_{u\in \bN^d}$ is a subsequence of $\cS$. The {\it $k$-kernel}, denoted by $Ker_k(\cS)$, of $\cS$ is defined to be the collection of subsequences of $\cS$ given by (\ref{k-kernel}) for all $l\ge 0$ and $0\le v_i<k^l$.  
\end{Defn}

We have the equivalent description on $k$-automatic sequences. 

\begin{Lem}\label{Lem: k-kernel}
A sequence $\{a_u\}_{u\in\bN^d}$ is $k$-automatic if its $k$-kernel is a finite set.
\end{Lem}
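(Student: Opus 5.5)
We prove the stated direction: if the $k$-kernel of $\cS=\{a_u\}_{u\in\bN^d}$ is finite, then $\cS$ is generated by a $d$-dimensional $k$-DFAO. The approach is the standard one: the finitely many kernel sequences become the states of the automaton.

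Let $Q:=Ker_k(\cS)$, which is finite by hypothesis, and take $q_0:=\cS$ (the case $l=0$, $v=0$). For $T=\{b_u\}\in Q$, let $\tau(T):=b_{(0,\dots,0)}$. For a letter $w=(w_1,\dots,w_d)\in\Sigma_k^d$, define
$$\delta(T,w):=\{b_{ku+w}\}_{u\in\bN^d}.$$
We must check that $\delta(T,w)$ lies in $Q$. If $T$ is given by $b_u=a_{k^lu+v}$, then
$$b_{ku+w}=a_{k^{l+1}u+(k^lw+v)},$$
and each coordinate satisfies $0\le k^lw_i+v_i<k^{l+1}$. So $\delta(T,w)$ is again a kernel element.

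Next we show by induction on $n$ that for a word $w=w_h\cdots w_0$ (reading the most significant digit first), the state $\delta(q_0,w)$ is the sequence
$$u\mapsto a_{k^{h+1}u+N(w)},\qquad N(w):=\sum_{j=0}^h w_jk^j.$$
The step is exactly the computation above: appending a digit $w'$ on the right sends $v$ to $k^lw'+v$. The point requiring care is this digit order. The recursion $\delta(q,w_1\cdots w_n)=\delta(\delta(q,w_1\cdots w_{n-1}),w_n)$ processes the most significant digit first. However, the transition as defined peels off the least significant digit, since $\{b_{ku+w}\}$ fixes the lowest digit. The two conventions do not compose correctly as they stand.

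We handle this as follows. From the kernel computation, after reading the letters in the order they are fed, the accumulated offset is $v$ with the last-read letter in the highest position. So the automaton reads $w(u)$ in reverse, least significant digit first. We fix this in one of two ways. The first is to run the construction on the reversed word $w_0\cdots w_h$, which suffices because the class of automatic sequences is closed under reversal of the reading direction (a standard fact, cf. \cite{AS-auto-seqn}). The second, which is more self-contained, uses the finite set of equivalence classes under the relation $T\sim T'$ iff they agree after all suffix extensions, and builds the forward automaton on subsets of $Q$. I would prefer to cite the reversal result and state clearly which reading order Definition \ref{Defn: automatic_sequence} uses.

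Finally we evaluate the output. In reverse reading, after consuming all digits of $u$ the state is $u'\mapsto a_{k^{h+1}u'+u}$, and $\tau$ of this state is $a_u$. Leading zero digits do not matter, because they only add a zero multiple of a higher power of $k$. Hence $\tau(\delta(q_0,\cdot))$ generates $\cS$, and $\cS$ is $k$-automatic. The main obstacle is the digit-order bookkeeping; the finiteness of the state set is immediate from the hypothesis.
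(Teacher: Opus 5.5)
The paper gives no proof of this lemma; it is quoted as the standard kernel characterization of automatic sequences (cf.\ \cite{AS-auto-seqn}), so there is no in-paper argument to compare with. Your construction is the standard textbook one and is correct in substance. $Q=Ker_k(\cS)$ is closed under $T\mapsto\{b_{ku+w}\}_{u}$ by the bound $0\le k^lw_i+v_i<k^{l+1}$. Fed the digits of $u$ least significant first, your automaton ends in the state $u'\mapsto a_{k^{h+1}u'+u}$, whose output is $a_u$.

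Two points need cleaning up.

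First, your induction paragraph asserts $\delta(q_0,w_h\cdots w_0)=\{a_{k^{h+1}u+N(w)}\}_{u}$ for most-significant-first input. This is false, because the step $v\mapsto k^lw'+v$ puts the newly read digit in the \emph{highest} position, and you only retract it afterwards. State the correct invariant from the outset: $\delta(q_0,w_0w_1\cdots w_h)=\{a_{k^{h+1}u+N(w)}\}_{u}$.

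Second, the reversal theorem you want to cite is usually stated for $d=1$. It does extend, since its proof only uses that regular languages over the finite alphabet $\Sigma_k^d$ are closed under reversal, but you should say so. Your alternative via ``equivalence classes'' and ``subsets of $Q$'' is too vague to verify as written.

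You can avoid both issues with an explicit most-significant-first automaton. Let the states be the maps $g\colon Q\to\tau(Q)$, with initial state $\tau$, transitions $\delta'(g,w):=g\circ\delta(\cdot,w)$, and output $g\mapsto g(\cS)$. By induction, after reading $w_h\cdots w_j$ the state is $T\mapsto\tau(\delta(T,w_jw_{j+1}\cdots w_h))$. So after reading all of $w(u)$ the state is $T\mapsto T(u)$, the $u$-th term of $T$, and the output is $a_u$. Moreover $\delta'(\tau,(0,\dots,0))=\tau$, so leading zero letters are harmless. This makes the proof self-contained, with at most $|\tau(Q)|^{|Q|}$ states.
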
 

If $\cS=\{a_u\}_{u\in\bN^d}$ is $k$-automatic, the cardinality of its $k$-kernel is called the {\it complexity} of $\{a_u\}_{u\in\bN^d}$. For $1\le i\le r$, let $\cS_i=\{a_u^{(i)}\}_{u\in \bN^d}$ be sequences over a finite set $\Delta$. Let $a_u=(a_u^{(1)}, \dots, a_u^{(r)})$ be $r$-tuples. Write $\cS=\{a_u\}_{u\in \bN^d}$. 

\begin{Lem}\label{Lem: auto_tuples}
The sequence $\cS$ of $r$-tuples is $k$-automatic if and only if each $\cS_i$ is $k$-automatic. Furthermore the complexity of $\cS$ is no more than the product of complexities of all $\cS_i$. 
\end{Lem}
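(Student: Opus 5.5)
The plan is to work directly with $k$-kernels, using the characterization in \Cref{Lem: k-kernel}. I take its converse direction (every $k$-automatic sequence has a finite $k$-kernel) as the standard equivalence intended there.

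The key observation is that taking kernel subsequences commutes with forming tuples. Fix $l\ge 0$ and $v=(v_1,\dots,v_d)$ with $0\le v_j<k^l$. Write $\cS_{l,v}$ for the subsequence $\{a_{k^lu+v}\}_{u\in\bN^d}$, and define $\cS^{(i)}_{l,v}$ in the same way for $\cS_i$. Then $\cS_{l,v}$ is exactly the sequence of $r$-tuples whose $i$-th coordinate sequence is $\cS^{(i)}_{l,v}$.

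For the "if" direction and the complexity bound, I consider the map
\[
Ker_k(\cS)\to Ker_k(\cS_1)\times\cdots\times Ker_k(\cS_r),\qquad \cS_{l,v}\mapsto (\cS^{(1)}_{l,v},\dots,\cS^{(r)}_{l,v}).
\]
This map is well defined, because a sequence of tuples determines its coordinate sequences, so it does not depend on the choice of $(l,v)$ representing a given kernel element. It is injective, because the coordinate sequences in turn determine the tuple sequence. Hence if every $Ker_k(\cS_i)$ is finite, then $|Ker_k(\cS)|\le\prod_i|Ker_k(\cS_i)|$. In particular $\cS$ has a finite kernel, so it is $k$-automatic, and its complexity is at most the product of the complexities of the $\cS_i$.

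For the "only if" direction, I use the projection map $Ker_k(\cS)\to Ker_k(\cS_i)$ given by $\cS_{l,v}\mapsto\cS^{(i)}_{l,v}$. It is well defined for the same reason as above, and it is surjective by definition of the kernel. So $Ker_k(\cS_i)$ is finite whenever $Ker_k(\cS)$ is.

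If one prefers to avoid the converse of \Cref{Lem: k-kernel}, the "only if" direction can be done with automata instead. Given a DFAO generating $\cS$, replace its output function $\tau$ by $\pi_i\circ\tau$, where $\pi_i$ is the $i$-th projection; the new DFAO generates $\cS_i$.

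There is no real obstacle here; everything is bookkeeping. The one point needing care is that the kernel is a set of sequences, not of index pairs $(l,v)$. Different pairs may give the same subsequence, which is why I checked well-definedness above by noting that tuple sequences and their coordinate sequences determine each other.
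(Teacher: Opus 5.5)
Your proof is correct and follows the paper's argument. The paper calls the equivalence obvious and gets the complexity bound from $Ker_k(\cS)\subset Ker_k(\cS_1)\times\cdots\times Ker_k(\cS_r)$, which is your injective coordinate map; your checks of well-definedness on kernel elements and your note that the ``only if'' direction needs the converse of \Cref{Lem: k-kernel} (or the projected DFAO) fill in details the paper leaves implicit.
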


\begin{proof}
The first statement is obvious, the second statement follows immediately from 
$$Ker_k(\cS)\subset Ker_k(\cS_1)\times \cdots \times Ker_k(\cS_r).$$
\end{proof}

We also need the following result for $q$-automatic sequences over $\bF_q$. 

\begin{Lem}\cites{Salon87, Salon89}\label{Lem: Salon's thm}
Let $\{a_u\}_{u\in \bN^d}$ be a sequence over $\bF_q$. Then $\{a_u\}_{u\in \bN^d}$ is $q$-automatic if and only if its generating series $\sum_{u\in\bN^d}a_u X^u$ is algebraic over $\bF_q(X_1,\dots,X_d)$, where $\bF_q(X_1,\dots,X_d)$ stands for the field of rational functions over $\bF_q$ with variables $X_1,\dots,X_d$.    
\end{Lem}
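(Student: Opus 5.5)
This is Salon's multidimensional Christol theorem, cited from \cites{Salon87, Salon89}. If I had to reprove it, I would follow the standard route via Cartier operators, which handles both directions at once. For $0\le v_i<q$, define the $\bF_q$-linear operators $\Lambda_v$ on $\bF_q[[X_1,\dots,X_d]]$ by
$$\Lambda_v\Big(\sum_u a_uX^u\Big)=\sum_u a_{qu+v}X^u.$$
Every $f$ then decomposes as $f=\sum_{v}X^v(\Lambda_v f)^q$, using $a^q=a$ on $\bF_q$. For $f=\sum a_uX^u$, the $q$-kernel of $\{a_u\}$ corresponds exactly to the orbit of $f$ under the monoid generated by the $\Lambda_v$. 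So the key reformulation is: $\{a_u\}$ is $q$-automatic if and only if this orbit is finite.

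Automatic $\Rightarrow$ algebraic. Suppose the orbit $\{f=f_1,\dots,f_r\}$ is finite. Each $f_i$ decomposes as $f_i=\sum_v X^v f_{j(i,v)}^q$. Let $W$ be the $\bF_q(X)$-span of $f_1,\dots,f_r$ inside $\bF_q((X))$. The decomposition shows $W\subseteq \sum_v X^v W^q$. Raising to $q$-th powers repeatedly, all $f^{q^k}$ lie in the $\bF_q(X)$-span of the finitely many elements $f_j^{q^m}$ with $m\le r$. Concretely, I would use the Frobenius-twisted linear relation to show that $f,f^q,f^{q^2},\dots$ span a finite-dimensional space. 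The cleanest way is to note that the $\bF_q(X)$-space $V=\sum_i \bF_q(X)f_i$ satisfies $V\subseteq \sum_v X^v V^{q}$, hence $\dim_{\bF_q(X^q)}\!V^q$-type counting gives $f^{q^k}\in$ the span of $\{f_i^{q^k}\}$ and $\{f_i^{q^{k}}\}\subset\sum_v X^v\{f_i^{q^{k+1}}\}$. A linear dependence among $f,f^q,\dots,f^{q^{r}}$ over $\bF_q(X)$ then follows by dimension, which gives algebraicity.

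Algebraic $\Rightarrow$ automatic. Suppose $f$ is algebraic. By Ore's lemma, $f,f^q,\dots,f^{q^n}$ are $\bF_q[X]$-linearly dependent, giving a relation $\sum_{k=0}^n P_k f^{q^k}=0$ with $P_0\neq0$. Setting $g=f/P_0$, one obtains $g=\sum_{k\ge1}Q_k g^{q^k}$ with $Q_k=-P_kP_0^{q^k-2}$. Now consider the finite-dimensional $\bF_q$-space
$$\mathcal{U}=\Big\{\sum_{k=0}^n R_k g^{q^k}:\deg R_k\le N\Big\}$$
for suitable large $N$. Using $\Lambda_v(R\,h^{q})=\Lambda_v(R)h$ and $\deg\Lambda_v R\le \deg R/q$, one checks that $\Lambda_v$ maps $\mathcal U$ into itself once $N$ exceeds the degrees of $P_0$ and the $Q_k$. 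Here the degree should be taken per variable. Since $f=P_0g\in\mathcal U$ and $\mathcal U$ is finite, the orbit of $f$ is finite.

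The main obstacle is the degree bookkeeping in the stability of $\mathcal U$. It needs the right choice of $N$ and per-variable degree bounds, so that the $q$-th power shift absorbs the coefficients. The rest is formal.
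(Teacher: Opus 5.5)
Your sketch is correct and is the standard Cartier-operator proof (Christol--Kamae--Mend\`es France--Rauzy, extended to $d$ variables by Salon). The paper gives no proof of its own here: it cites Salon and alludes to this route in the remark that follows. Two points should be made explicit: the normalization $P_0\neq 0$ in Ore's lemma itself needs a Cartier step (apply a suitable $\Lambda_v$ to lower the smallest index with nonzero coefficient), and for $d\ge 2$ the element $g=f/P_0$ lies only in $\bF_q[[X_1,\dots,X_d]][1/P_0]$, so $\Lambda_v$ must first be extended there via $\Lambda_v(a\,b^q)=\Lambda_v(a)\,b$.
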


\begin{Rmk}
Traditionally there exist two ways to prove Christol's theorem. One is to use Cartier's operators 
$$\Lambda_v \left( \sum_{u\in\bN^d} a_uX^u \right)=\sum_{u\in \bN^d} a_{qu+v}^{1/q} X^u$$ 
where $v=(v_1,\dots,v_d)$ with $0\le v_i<q$, and the monoid generated by them. The other relies on a result of Fursternburg on diagonals of rational power series. Both ways demand that the coefficients are in a field. If we consider power series in Theorem \ref{Thm: exist_uniq_power_series}, then both ways seemingly do not work. 
\end{Rmk}

\subsection{Algebraic elements} 
We follow the notations and assumptions established in \Cref{Sect: power_ser_exp}. Precisely, we denote $A$ to be a finitely generated integral domain over $\bF_q$ with $\dim A=d$. Write $\fa$ for an ideal of $A$ such that $\depth \fa=d$. Let $\hat{A}$ be the $\fa$-adic completion of $A$, $\hat{\fa}=\fa\hat{A}$ be the completion of $\fa$ in $\hat{A}$. Fix $\bx=\{x_1,\dots,x_d\}$ to be an $\hat{A}$-regular sequence of length $d$ in $\hat{\fa}$, and take $E\subset \hat{A}$ to be a set of representatives of $\hat{A}/(\bx)$. For an element $y\in \hat{A}$, 
let $y=\sum_{u\in \bN^d} a_ux^u$ be the unique power series of $y$ with respect to $\bx$ and $E$ (\Cref{Thm: exist_uniq_power_series_refined}). 
\begin{Defn}
    With the above notations,  an element $y\in \hat{A}$ is called {\it $q$-automatic with respect to $\bx$ and $E$} if the sequence of coefficients $\{a_u\}_{u\in\bN^d}$ is a $q$-automatic sequence over $E$. When $\bx$ and $E$ are clear from the contents, we simple call this sequence {\it $q$-automatic}. 
\end{Defn}

\begin{Defn}
    An element $y\in \hat{A}$ is called  {\it algebraic (over $A$)} if there exists a polynomial $P(X)\in A[X]$ such that $P(y)=0$.
\end{Defn}
Denote by $A^a$ the subset of $\hat{A}$ consisting of elements algebraic over $A$. As said in \Cref{Sect: intro}, our main goal is to generalize Christol's theorem to the expansion of $y$ with respect to $\bx$ and $E$. That is, to show an element of $\hat{A}$ is $q$-automatic if and only if it is algebraic over $A$. Such an equivalence apparently depends on the choice of $\bx$ and $E$. It would be unreasonable to consider this equivalence if there exist non-algebraic elements in $\bx$ or $E$. So we always assume the elements of $\bx$ and $E$ are algebraic over $A$. 

The most interesting case happens when we choose $\bx$ to be a sequence of algebraic elements, which is possible due to 
\Cref{Lem: dim_height_depth_in_completion}. We also pay special attention to the situations that $E$ consists of algebraic elements. Such an $E$ exits. In fact, suppose $\hat{A}/(\bx)$, as an $\bF_q$-vector space, is of dimension $r$. Let $\bar{e}_1,\cdots,\bar{e}_r$ be an $\bF_q$-basis of $\hat{A}/(\bx)$. For each $\bar{e}_i$, choose $e_i\in A^a$ such that the reduction of $e_i$ modulo $(\bx)$ is $\bar{e}_i$. Such an $e_i$ exists because $A^a$ is dense in $\hat{A}$. Then by letting $E=\bF_qe_1+\cdots+\bF_qe_r$, we get a set $E$ with algebraic elements. Actually the set $E$ chosen in this way forms an $\bF_q$-vector space. 

Define $B=\bF_q[x_1,\dots, x_d]$ to be the subring of $\hat{A}$ that is generated by adjoining elements $x_1,\dots,x_d$ to $\bF_q$. Let $\fb=(x_1,\cdots,x_d)$ be the maximal ideal of $B$ generated by $x_1,\cdots,x_d$. One can check that $\fb=\hat{\fa}\cap B=\hat{\fm} \cap B$ for any maximal ideal $\hat{\fm}$ of $\hat{A}$. 

\begin{Lem}\label{Lem: B_poly_ring}
    The Krull dimension of $B$ is $d$. In particular, $x_1, \dots, x_d$ are algebraically independent over $\bF_q$, and $B$ is isomorphic to the polynomial ring of $d$ indeterminates over $\bF_q$.  
\end{Lem}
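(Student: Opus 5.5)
We must show $\dim B=d$, which gives the rest. $B=\bF_q[x_1,\dots,x_d]$ is a quotient of the polynomial ring $\bF_q[X_1,\dots,X_d]$, so $\dim B\le d$. Moreover, if $\dim B=d$, the kernel of $\bF_q[X]\to B$ must be zero: it is a prime ideal (since $B\subset\hat A$ and $\hat A$ need not be a domain, this must be checked). The cleaner route is to directly prove algebraic independence, which gives both claims at once: then $B\cong\bF_q[X_1,\dots,X_d]$ and $\dim B=d$.

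\textbf{Step 1: reduction to a polynomial relation.} Suppose $P\in\bF_q[X_1,\dots,X_d]$ is nonzero with $P(x_1,\dots,x_d)=0$ in $\hat A$. Write $P=\sum_{|u|\ge m}c_uX^u$, where $m$ is the lowest degree with some nonzero coefficient, and let $P_m=\sum_{|u|=m}c_uX^u\ne 0$ be the lowest-degree homogeneous part.

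\textbf{Step 2: pass to the associated graded ring.} Set $I=(\bx)$. Then
\[
\sum_{|u|=m}c_ux^u=-\sum_{|u|>m}c_ux^u\in I^{m+1},
\]
so the image of $P_m$ under the canonical map $(\hat A/I)[X]\to G_I(\hat A)$ vanishes in $I^m/I^{m+1}$. Since $\bx$ is $\hat A$-regular, \Cref{Prop: M_seq_isom_graded_mod} says this map is an isomorphism. Hence every coefficient $c_u$ with $|u|=m$ lies in $I$.

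\textbf{Step 3: contradiction.} Each such $c_u$ is an element of $\bF_q\subset\hat A$, and $I$ is a proper ideal by \Cref{Defn: reg_seq}(i). A nonzero element of $\bF_q$ is a unit, so it cannot lie in $I$. Thus all $c_u=0$ for $|u|=m$, contradicting $P_m\ne0$. Equivalently, this is the uniqueness statement of \Cref{Prop: uniq_expan_eqv_quasi-reg}: the expansion of $0$ with digits in a representative set containing $\bF_q$ would be nonunique. To use that directly, one needs $\bF_q\hookrightarrow\hat A/I$ injective, which again holds because $I$ is proper.

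\textbf{Conclusion.} Hence $x_1,\dots,x_d$ are algebraically independent over $\bF_q$, $B$ is isomorphic to a polynomial ring in $d$ variables, and $\dim B=d$.

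The only potential obstacle is making sure the graded-ring argument applies to a relation with coefficients in $\bF_q$ rather than in $E$. This is handled by \Cref{Prop: M_seq_isom_graded_mod} together with the properness of $I$, so no real difficulty arises.
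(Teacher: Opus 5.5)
Your proof is correct, but it takes a different route from the paper.

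\textbf{The paper's route.} The paper proves only $\dim B\ge d$ and treats the remaining claims as routine.
It localizes $\hat A$ at a maximal ideal $\hat\fm\supset\hat\fa$ and checks, by flatness, that $\bx$ stays regular in $\hat A_{\hat\fm}$. Hence $\hat A_{\hat\fm}$ is Cohen--Macaulay of dimension $d$, and $\hat A_{\hat\fm}/(\bx)$ is zero-dimensional by \Cref{Prop: CM_basic_prop}(3).
It then applies the dimension inequality $\dim\hat A_{\hat\fm}\le\dim B_\fb+\dim\hat A_{\hat\fm}/\fb\hat A_{\hat\fm}$ to the local map $B_\fb\to\hat A_{\hat\fm}$. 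This forces $\dim B_\fb\ge d$.

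\textbf{Your route.} You prove algebraic independence directly. You apply the graded isomorphism $(\hat A/I)[X]\cong G_I(\hat A)$ of \Cref{Prop: M_seq_isom_graded_mod} to the lowest-degree homogeneous part of a putative relation. The dimension statement then follows at once.

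\textbf{What each buys.} Your argument is more elementary and more general. It uses only that $\bx$ is quasi-regular and that $I$ is proper. So it shows that any quasi-regular sequence generating a proper ideal in a ring containing a field $k$ is algebraically independent over $k$. It needs neither that the length equals $\dim\hat A$, nor Cohen--Macaulayness, nor the fibre-dimension inequality.

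It also makes explicit the step the paper leaves implicit: passing from $\dim B=d$ to injectivity of $\bF_q[X_1,\dots,X_d]\to B$. Your remark in the first paragraph that the kernel's primality ``must be checked'' is unnecessary, and you don't use it. Every nonzero ideal $J$ of the polynomial ring satisfies $\dim\bF_q[X_1,\dots,X_d]/J<d$, so $\dim B=d$ alone forces the kernel to vanish.

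The paper's route, in exchange, records that $B_\fb\to\hat A_{\hat\fm}$ has zero-dimensional fibre. That is in the spirit of the quasi-finiteness argument used later to show $A^h$ is finite over $B^h$.
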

\begin{proof}
    Only the first statement requires a proof. Since it is easy to see that the $\dim B$ is less than or equal to $d$, we are reduced to showing $\dim B\geq d$. For this, note that since $\{x_1, \dots, x_d\}$ is a regular sequence of $A$ in $\hat{\fa}$. Take $\hat{\fm}$ to be any maximal ideal containing $\hat{\fa}$, then  $\{x_1, \dots, x_d\}$ is again a regular sequence in the localization $\hat{A}_{\hat{\fm}}$. In fact, according to \Cref{Defn: reg_seq}, we know that $\{x_1, \dots, x_d\}$ is regular if and only if the morphism of $\hat{A}$-module 
    \[
    x_{i+1}: \hat{A}/(x_1,\dots, x_i) \to \hat{A}/(x_1,\dots, x_i) , \quad \bar{a}\mapsto \bar{a}x_{i+1}
    \]
    is injective  for every $i=0, \dots, d-1$ (when $i=0$, we take $x_1: \hat{A}\to \hat{A}$). Recall that $\hat{A}_{\hat{\fm}}$ is a flat $\hat{A}$-module, hence tensoring with $\hat{A}_{\hat{\fm}}$ will preserves the injectivity of the induced map 
    \[
     x_{i+1}: \hat{A}_{\hat{\fm}}/(x_1,\dots, x_i) \to \hat{A}_{\hat{\fm}}/(x_1,\dots, x_i).
    \]
    This tells that $\{x_1, \dots, x_d\}$ is also regular in the local ring $\hat{A}_{\hat{\fm}}$. In particular, it is Cohen-Macaulay for $\dim \hat{A}_{\hat{\fm}}=\dim A=d$ (cf. \Cref{Defn: Cohen-Macaulay}).

    Now consider the natural homomorphism of local rings $B_\fb\to \hat{A}_{\hat{\fm}}$. According to the inequality \cite{Eisenbud}*{Theorem~10.10}
    \[
    d=\dim \hat{A}_{\hat{\fm}}\leq \dim B_\fb + \dim \hat{A}_{\hat{\fm}}/\fb \hat{A}_{\hat{\fm}}
    \]
    it suffices to show $ \dim \hat{A}_{\hat{\fm}}/\fb \hat{A}_{\hat{\fm}}=0$. Notice that  
    $
    \hat{A}_{\hat{\fm}}/\fb \hat{A}_{\hat{\fm}}=\hat{A}_{\hat{\fm}}/(\bx)
    $. The regularity of the sequence $\{x_1, \dots, x_d\}$ in $\hat{A}_{\hat{\fm}}$ implies that $\dim \hat{A}_{\hat{\fm}}/(\bx)=0$ by \Cref{Prop: CM_basic_prop}(3). Hence the desired equality $\dim B=\dim B_\fb=d$ follows. 
\end{proof}

Let $\hat{B}$ be the completion of $B$ in $\hat{A}$, which coincides with $\fb$-adic completion of $B$. The above lemma tells that  $\hat{B}$ is the power series ring $\bF_q[[x_1,\dots,x_d]]$. Write $\hat{\fb}:=\fb \hat{B}$. 

\begin{Lem}\label{Lem: free_mod}
Assume $E$ is an $\bF_q$-vector space. Let $e_1,\dots,e_r$ be an $\bF_q$-basis of $E$. Then $$\hat{A}=\hat{B}e_1\oplus \cdots \oplus \hat{B}e_r$$
as $\hat{B}$-modules. In particular, $\hat{A}$ is a free $\hat{B}$-module of rank $r=\dim_{\bF_q} \hat{A}/(\bx)$.   
\end{Lem}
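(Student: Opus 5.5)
The plan is to use the unique power series expansion of \Cref{Thm: exist_uniq_power_series_refined} together with the $\bF_q$-linearity of $E$.

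\emph{Surjectivity.} Let $y\in\hat{A}$, and write $y=\sum_{u\in\bN^d}a_ux^u$ with $a_u\in E$, as in \Cref{Thm: exist_uniq_power_series_refined}. Since $E$ is an $\bF_q$-vector space with basis $e_1,\dots,e_r$, each coefficient can be written uniquely as
$$a_u=\sum_{i=1}^r c_{u,i}e_i,\qquad c_{u,i}\in\bF_q.$$
Set $f_i:=\sum_{u}c_{u,i}x^u$. By \Cref{Lem: B_poly_ring}, each $f_i$ is a well-defined element of $\hat{B}=\bF_q[[x_1,\dots,x_d]]$. The partial sums
$$y_n=\sum_{|u|\le n}a_ux^u=\sum_{i=1}^r\Big(\sum_{|u|\le n}c_{u,i}x^u\Big)e_i$$
converge $\hat{\fa}$-adically. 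The $\hat{\fb}$-adic topology on $\hat{B}$ is finer than the topology induced from $\hat{A}$, since $\fb\subset\hat{\fa}$. Hence the inner sums converge in $\hat{A}$ to $f_i$, and we get $y=\sum_i f_ie_i$. So $\hat{A}=\sum_i\hat{B}e_i$.

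\emph{Directness.} Suppose $\sum_i f_ie_i=0$ with $f_i=\sum_u c_{u,i}x^u\in\hat{B}$. Expanding termwise as above gives
$$0=\sum_u\Big(\sum_i c_{u,i}e_i\Big)x^u,$$
a power series expansion of $0$ with coefficients $\sum_i c_{u,i}e_i$ in $E$, because $E$ is closed under $\bF_q$-linear combinations. Since $0\in E$, the trivial expansion $0=\sum_u 0\cdot x^u$ is also a power series expansion of $0$ with respect to $\bx$ and $E$. By the uniqueness in \Cref{Thm: exist_uniq_power_series_refined}, every coefficient $\sum_i c_{u,i}e_i$ is $0$. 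As the $e_i$ are linearly independent, all $c_{u,i}=0$, and therefore every $f_i=0$.

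\emph{Rank.} Finally, $E\to\hat{A}/(\bx)$ is an $\bF_q$-linear bijection, because the quotient map is linear and $E$ is a subspace. Hence $r=\dim_{\bF_q}\hat{A}/(\bx)$.

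The only delicate point is the topological step in the surjectivity argument: checking that evaluating a formal series of $\hat{B}$ inside $\hat{A}$ is compatible with rearranging $\sum_u a_ux^u$ into the $r$ sums $\sum_i(\cdot)e_i$. This holds because each finite-level truncation modulo $\hat{\fa}^{n}$ involves only finitely many monomials, since $x^u\in\hat{\fa}^{|u|}$.
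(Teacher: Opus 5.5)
Your proof is correct and follows the same route as the paper: take the unique $E$-expansion of $y$, write each digit in the basis $e_1,\dots,e_r$, and regroup to get $y=\sum_i y^{(i)}e_i$ with $y^{(i)}\in\hat{B}$, with directness coming from uniqueness of the expansion. You also make explicit two points the paper leaves implicit: the continuity of $\hat{B}\to\hat{A}$ needed for the regrouping, and the uniqueness argument obtained by comparing with the trivial expansion of $0$.
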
 

\begin{proof}
For any $y\in \hat{A}$, by Theorem \ref{Thm: exist_uniq_power_series}, there exists a unique expansion 
$$ y=\sum_{u \in \bN^d}a_ux^u$$
with $a_u\in E$. Write $a_u$ as $a_u=a_u^{(1)}e_1+\cdots +a_u^{(r)}{e_r}$ with $a_u^{(i)}\in \bF_q$. Then 
$$y=\sum_{u\in \bN^d} \sum_{i=1}^r a_u^{(i)}e_i x^u=\sum_{i=1}^r \sum_{u\in \bN^d}a_u^{(i)}x^u.$$
Set $y^{(i)}:=\sum_{u\in \bN^d}a_u^{(i)}x^u\in \bF_q[[x_1,\dots,x_d]]=\hat{B}$. Then for every $y\in \hat{A}$, there exist unique $y^{(1)},\dots,y^{(r)}\in\bF_q[[x_1,\dots,x_d]]$ such that $y=y^{(1)}e_1+\cdots y^{(r)}e_r$. Hence we have  $\hat{A}=\hat{B}e_1\oplus \cdots \oplus \hat{B}e_r$.
\end{proof}

\begin{Lem} \label{Lem: auto_seq_eqv_alg_summand}
Let $y\in \hat{A}$ and $y=y^{(1)}e_1+\cdots y^{(r)}e_r$ with $y^{(i)}\in \hat{B}$. Then 
$y$ is $q$-automatic if and only if each $y^{(i)}$ ($1\le i\le r$) is algebraic.
\end{Lem}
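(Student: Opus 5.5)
The plan is to translate automaticity of the $E$-valued sequence $\{a_u\}$ into automaticity of $r$ scalar sequences, and then apply Salon's theorem to each of them.

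\textbf{Step 1: identify the coordinate sequences.} As in the proof of \Cref{Lem: free_mod}, write each coefficient as $a_u=\sum_{i=1}^r a_u^{(i)}e_i$ with $a_u^{(i)}\in\bF_q$. Uniqueness of the decomposition $\hat{A}=\bigoplus_i \hat{B}e_i$ forces
\[
y^{(i)}=\sum_{u\in\bN^d}a_u^{(i)}x^u\in\hat{B}=\bF_q[[x_1,\dots,x_d]].
\]
So the $y^{(i)}$ in the statement are exactly the generating series of the coordinate sequences $\cS_i=\{a_u^{(i)}\}_{u\in\bN^d}$.

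\textbf{Step 2: reduce automaticity of $\{a_u\}$ to that of the $\cS_i$.} The map $E\to\bF_q^r$, $a\mapsto(a^{(1)},\dots,a^{(r)})$, is a bijection of finite alphabets. Applying a bijection of alphabets termwise does not change the $q$-kernel up to relabeling, so it preserves $q$-automaticity. Hence $\{a_u\}$ is $q$-automatic if and only if the sequence of $r$-tuples $\{(a_u^{(1)},\dots,a_u^{(r)})\}_u$ is $q$-automatic. By \Cref{Lem: auto_tuples}, this holds if and only if every $\cS_i$ is $q$-automatic.

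\textbf{Step 3: apply Salon's theorem.} By \Cref{Lem: B_poly_ring}, $x_1,\dots,x_d$ are algebraically independent over $\bF_q$ and $\hat{B}=\bF_q[[x_1,\dots,x_d]]$. \Cref{Lem: Salon's thm} then says that $\cS_i$ is $q$-automatic if and only if $y^{(i)}$ is algebraic over $\bF_q(x_1,\dots,x_d)$. Clearing denominators, this is the same as being algebraic over $B=\bF_q[x_1,\dots,x_d]$. Chaining Steps 2 and 3 gives the equivalence.

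The only point needing care is what ``algebraic'' means for $y^{(i)}\in\hat{B}$ in the statement. I read it as algebraic over $B$, which is the meaning that makes Salon's theorem apply directly. If instead it were meant as algebraic over $A$, I would need to show that an element of $\hat{B}$ algebraic over $A$ is algebraic over $B$. That requires $A$ to be algebraic over $B$ inside $\hat{A}$, which I expect to be handled later in the paper via \Cref{Thm: free_mod_alg_version}. For this lemma I would state explicitly that algebraicity is over $B$, so the proof reduces to bookkeeping plus \Cref{Lem: auto_tuples} and \Cref{Lem: Salon's thm}.
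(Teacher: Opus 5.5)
Your proposal is correct and follows the paper's proof. Like the paper, you pass from $\{a_u\}$ to the coordinate sequences $\{a_u^{(i)}\}$ via \Cref{Lem: auto_tuples} and then apply \Cref{Lem: Salon's thm} (which the paper uses only implicitly) to $y^{(i)}\in\hat{B}=\bF_q[[x_1,\dots,x_d]]$; you just spell out the alphabet bijection $E\to\bF_q^r$ and the role of \Cref{Lem: B_poly_ring}. Your reading of ``algebraic'' as algebraic over $B$ is the intended one, as the sentence right after the lemma confirms, and the passage from $B$ to $A$ is exactly what \Cref{Thm: free_mod_alg_version} is for.
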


\begin{proof}
Let $\{a_u\}_{u\in \bN^d}$ and $\{a_u^{(i)}\}$ be as in the proof of \Cref{Lem: free_mod}. Then by \Cref{Lem: auto_tuples}, $\{a_u\}_{u\in \bN^d}$ is $q$-automatic if and only if for all $1\le i\le r$, $\{a_u^{(i)}\}$ are $q$-automatic, if and only if $y^{(i)}$ are algebraic. 
\end{proof}

By Lemma \ref{Lem: auto_seq_eqv_alg_summand}, to prove Theorem \ref{Thm: algebraic_eqv_automatic}, it is equivalent to show $y$ is algebraic over $A$ if and only if all $y^{(i)}$ are algebraic over $B$. Let $B^a$ denote the subring of $\hat{B}$ consisting of elements algebraic over $B$. The following result is key to the proof of Theorem \ref{Thm: algebraic_eqv_automatic}. 
\begin{Thm}\label{Thm: free_mod_alg_version}
$A^a=B^ae_1\oplus \cdots \oplus B^ae_r$ is a free $B^a$-module of rank $r$. 
\end{Thm}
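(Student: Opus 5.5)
The plan is to identify both sides of the asserted equality with henselizations, and then to use the fact that a finite flat module over a henselian local ring that becomes free after completion is free, with a basis lifting any basis mod the maximal ideal. The inclusion $B^ae_1\oplus\cdots\oplus B^ae_r\subseteq A^a$ is the easy direction, but it needs a preliminary ring-theoretic fact.

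\textbf{Step 1: $A^a$ is a subring of $\hat{A}$.} Since $A\to\hat{A}$ is flat and $A$ is a domain, every nonzero $a\in A$ is a nonzerodivisor on $\hat{A}$. So if $y$ satisfies $P(y)=0$ with leading coefficient $a\neq 0$, then $ay$ is integral over $A$. Conversely, if $ay$ is integral for some nonzero $a$, then $y$ is algebraic. Thus
\[
A^a=\{y\in\hat{A}:\ \exists\, 0\neq a\in A \text{ with } ay \text{ integral over } A\},
\]
and this set is a ring because integral closures are rings.

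\textbf{Step 2: the easy inclusion.} By hypothesis $x_1,\dots,x_d\in A^a$, so $B\subseteq A^a$. Every element of $B^a$ is algebraic over the ring $A[x_1,\dots,x_d]\subseteq A^a$, and by transitivity of integrality (after clearing nonzerodivisor denominators as in Step 1) it lies in $A^a$. Since $e_i\in A^a$ and $A^a$ is a ring, we get $B^ae_1\oplus\cdots\oplus B^ae_r\subseteq A^a$. Directness of the sum is inherited from \Cref{Lem: free_mod}.

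\textbf{Step 3: the hard inclusion via henselization.} Let $y\in A^a$ and write $y=\sum y^{(i)}e_i$ with $y^{(i)}\in\hat{B}$; we must show each $y^{(i)}\in B^a$. The plan has three parts.
\begin{enumerate}
\item Show that $A^a$ coincides with the henselization $(A,\fa)^h$ inside $\hat{A}$, and that $B^a=(B,\fb)^h$. The idea is that for the excellent rings $A$ and $B$, the henselization is algebraically closed in the completion, by Artin approximation. Conversely, every element of the henselization is algebraic, since it lies in an étale neighborhood, which is a finitely generated algebra of the same dimension over a domain.
\item Show that $A^h:=A^a$ is a finite $B^h:=B^a$-module. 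Note that $A^h/\fb A^h\cong\hat{A}/(\bx)$ is finite, so $B^h\to A^h$ is quasi-finite over the closed point. Both rings are ind-étale over finitely generated algebras, and $B^h$ is henselian local. Applying Zariski's main theorem to a finitely generated subalgebra containing $e_1,\dots,e_r$ and $y$, the part lying over the closed point splits off as a finite factor. Since $A^h$ is henselian along $\hat{\fa}$ and its spectrum is connected to $V(\hat{\fa})$ in the appropriate sense, this finite factor exhausts $A^h$.
\item Conclude freeness. Because $B^h\to\hat{B}$ is faithfully flat and $A^h\otimes_{B^h}\hat{B}\cong\hat{A}$ is free by \Cref{Lem: free_mod}, the finite module $A^h$ is flat, hence free. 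The $e_i$ reduce to a basis of $A^h/\fb A^h=\hat{A}/(\bx)$, so by Nakayama they form a $B^h$-basis. Comparing with the unique $\hat{B}$-coordinates then gives $y^{(i)}\in B^h=B^a$.
\end{enumerate}

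The main obstacle is part (2), the finiteness of $A^h$ over $B^h$, and in particular the precise application of Zariski's main theorem together with the henselian splitting. Part (1) for the non-local pair $(A,\fa)$ also needs care: there I would reduce to the maximal ideals $\fm_i$ using the product decomposition of $\hat{A}$ from \Cref{Lem: dim_height_depth_in_completion}.
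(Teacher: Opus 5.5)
Your overall route is the paper's. You identify $A^a=A^h$ and $B^a=B^h$, prove that $A^h$ is a finite $B^h$-module, and finish with Nakayama plus the $\hat{B}$-freeness of $\hat{A}$ from \Cref{Lem: free_mod}. Your flatness-descent in 3(3) is a valid, slightly longer variant of the paper's ending. There, Nakayama shows that $e_1,\dots,e_r$ generate $A^h$, and their independence over $B^h$ is read off from independence over $\hat{B}$ inside $\hat{A}$.

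The gap is in 3(2), which is the crux, and as sketched it does not go through. Zariski's main theorem cannot be applied to $A^h$ itself, since $A^h$ is not known to be of finite type over $B^h$. Applying it to a finitely generated subalgebra $S\subset A^h$ containing $e_1,\dots,e_r,y$ runs into two problems.
\begin{enumerate}
\item Quasi-finiteness of $B^h\to S$ over the closed point does not follow from finiteness of $A^h/\fb A^h$. You do not know that $S/\fb S\to A^h/\fb A^h$ is injective, i.e.\ that $\fb A^h\cap S=\fb S$, and that is essentially what you are trying to prove.
\item Even granting a splitting $S=S_{\mathrm{fin}}\times C$, the henselian/connectedness argument only shows that $C$ dies in $A^h$. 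It gives no reason why $S_{\mathrm{fin}}$ should be all of $A^h$.
\end{enumerate}
At best you would learn that $A^h$ is a union of finite $B^h$-subalgebras, i.e.\ integral over $B^h$, not finite. To extract $y\in\bigoplus B^h e_i$ from finiteness of $S$, you would still need $S\otimes_{B^h}\hat{B}\cong\hat{A}$, which is again essentially the claim.

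The fix, which is what the paper does, is to apply the quasi-finite-implies-finite principle to a finite-type algebra whose henselization \emph{is} $A^h$, not to a subalgebra of $A^h$.
\begin{enumerate}
\item Since $x_i\in A^h\cap\fa\hat{A}=\fa A^h$ (by faithful flatness of $A^h\to\hat{A}$), and $A^h$ is the filtered colimit of \'etale neighborhoods, there is an \'etale neighborhood $(A',\fa')$ of $(A,\fa)$ with $x_1,\dots,x_d\in\fa'$.
\item Then $(A')^h=A^h$ by \Cref{Lem: henselization_of_etale_neighborhood}. Also $A^h\cong\prod_i (A'_{\fm_i'})^h$ by \Cref{Lem: henselization_radical_intersection}, where $\fm_1',\dots,\fm_s'$ are the maximal ideals of $A'$ containing $\fa'$.
\item The finite-type map $B\to A'$ is quasi-finite at each $\fm_i'$. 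The residue field is finite over $\bF_q$. Moreover $\dim A'_{\fm_i'}/\fb A'_{\fm_i'}=\dim \hat{A}_i/(\bx)\hat{A}_i=0$, because $A_{\fm_i}\to A'_{\fm_i'}$ is \'etale with trivial residue extension, so $A'_{\fm_i'}$ has the same completion $\hat{A}_i$ as $A_{\fm_i}$, and dimension is preserved under completion.
\item Now \Cref{Lem: henselization_and_quasi-finite} (Stacks 05WP, which is where Zariski's main theorem really enters) makes each $B^h\to (A'_{\fm_i'})^h$ finite. Hence $A^h$ is finite over $B^h$.
\end{enumerate}
The elements $e_i$ and $y$ play no role in this step. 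With this replacement for 3(2), the rest of your argument goes through.
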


The proof of Theorem \ref{Thm: free_mod_alg_version} is left to the next section. To study $A^a, B^a$ and their relationship, we will introduce the henselization of a ring at an ideal. In conclusion, we see that:   

\subsection{\texorpdfstring{Proof of \Cref{Thm: algebraic_eqv_automatic}}{Proof of Theorem~1.2}}\label{Sect: pf_thm_1.2}
Suppose $y$  to be an element of $\hat{A}$. Expand $y=y^{(1)}e_1+\cdots y^{(r)}e_r$, $y^{(i)}\in \hat{B}$ according to \Cref{Lem: free_mod}. Then by \Cref{Lem: auto_seq_eqv_alg_summand} we know that $y$ is $q$-automatic if and only if each $y^{(i)}$ is algebraic over $B$. Then using \Cref{Thm: free_mod_alg_version} one finds that this happens precisely when $y$ itself is algebraic over $A$. This establishes the desired equivalence and finishes the proof of the theorem.

\section{\texorpdfstring{Henselization and proof of \Cref{Thm: free_mod_alg_version}}{Henselization}}\label{Sect:  Henselization}
The main purpose of this section is to prove \Cref{Thm: free_mod_alg_version}. 
To that end, we begin by recalling the notion of the henselization of a pair. 
After establishing the necessary properties of this construction, we prove in 
\Cref{Prop: A^h=A^a} that the algebraic closure \(A^{a}\) of \(A\) in \(\hat{A}\) 
coincides precisely with the henselization of \(A\). This identification provides 
a crucial bridge, allowing us to translate the statement of 
\Cref{Thm: free_mod_alg_version} into an equivalent formulation for henselian rings. 
In view of the close analogy between henselization and completion, the proof of 
\Cref{Thm: free_mod_alg_version} is then completed in \Cref{Sect: proof_of_free_B^a_mod}.
\subsection{Henselization and algebraic closure}
In this section, a {\it pair} $(A,\fa)$ consists of a ring $A$ and an ideal $\fa$ in $A$. A \emph{morphism of pairs} $\varphi: (A,\fa)\to (A',\fa')$ is a ring homomorphism from $A$ to $A'$ such that $\varphi(\fa)\subset \fa'$.

\begin{Prop}\label{Prop: Henselian_pair}
    Let $(A, \fa)$ be a pair, for a polynomial $f\in A[T]$ let $\bar{f}$ denote the corresponding residue polynomial in the quotient ring $(A/\fa)[T]$. Then the following are equivalent: 
    \begin{enumerate}
        \item The ideal $\fa$ is contained in the Jacobson radical of $A$. For any monic polynomial $f\in A[T]$ and a factorization $\bar{f}=g_0h_0$, with $g_0$ and $h_0$ coprime monic polynomials in the quotient ring $(A/\fa)[T]$, there exists a factorization $f=gh$ in $A[T]$ with $g,h$ monic and $\bar{g}=g_0$, $\bar{h}=h_0$. 
        \item Given an \'etale ring homomorphism $A\to A'$ and an $A$-algebra map $\sigma: A'\to A/\fa$, there exists an $A$-algebra map $A'\to A$ lifting $\sigma$. 
        \item For any finite $A$-algebra $B$ the map $B\to B/IB$ induces a bijection on idempotents. 
    \end{enumerate}
\end{Prop}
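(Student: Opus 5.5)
This is a standard characterization of henselian pairs (see the Stacks Project, Tag 09XI, where the ideal written $I$ in (3) is our $\fa$). I would prove it by the cycle (1)$\Rightarrow$(3)$\Rightarrow$(2)$\Rightarrow$(1).

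\textbf{(2)$\Rightarrow$(1).} For the Jacobson radical condition, take $x\in\fa$ and the étale localization $A\to A_{1+x}$. Since $1+x$ is a unit modulo $\fa$, the quotient map induces an $A$-algebra map $\sigma: A_{1+x}\to A/\fa$. A lift $A_{1+x}\to A$ forces $1+x\in A^\times$, hence $\fa$ lies in the Jacobson radical.

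For the factorization, let $f$ be monic of degree $n$ with $\bar f=g_0h_0$, where $\deg g_0=m$ and $\deg h_0=n-m$. Let $A'$ be the $A$-algebra generated by the non-leading coefficients of a generic monic $g$ of degree $m$ and a generic monic $h$ of degree $n-m$, modulo the coefficient equations of $f=gh$. The Jacobian of this system is (up to sign) the resultant $\mathrm{Res}(g,h)$. Localizing $A'$ at $\mathrm{Res}(g,h)$ gives an étale $A$-algebra. The point $(g_0,h_0)$ defines $\sigma$ into $A/\fa$, because $g_0,h_0$ are coprime, so their resultant is a unit. A lift of $\sigma$ gives the required factorization.

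\textbf{(1)$\Rightarrow$(3).} \emph{Injectivity.} Let $B$ be finite over $A$. By lying over and going up, every maximal ideal of $B$ contracts to a maximal ideal of $A$, so $\fa B$ lies in the Jacobson radical of $B$. If two idempotents $e,e'$ have the same image modulo $\fa B$, then $e(1-e')$ and $e'(1-e)$ are idempotents in $\fa B$, hence zero, and so $e=e'$.

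\emph{Surjectivity, monogenic case.} Suppose $B=A[T]/(f)$ with $f$ monic. An idempotent of $B/\fa B=(A/\fa)[T]/(\bar f)$ corresponds, via the Chinese remainder theorem, to a coprime monic factorization $\bar f=g_0h_0$. Lift it to $f=gh$ using (1). Then $(g,h)B+\fa B=B$, and Nakayama's lemma (with $\fa B$ in the Jacobson radical) gives $(g,h)=B$. The CRT idempotent for $B\cong A[T]/(g)\times A[T]/(h)$ then lifts the given one.

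\emph{Surjectivity, general case.} Let $\bar e\in B/\fa B$ be idempotent and lift it to $b\in B$. Set $B'=A[b]\subset B$, a quotient of some $A[T]/(f)$ with $f$ monic. Since $b^2-b\in\fa B$ and $B$ is finite over $A$, a Cayley--Hamilton argument gives $(b^2-b)^N\in\fa B'$ for some $N$. So $\bar b$ is idempotent modulo a nilpotent ideal in $B'/\fa B'$, and I replace it by a genuine idempotent there. I then lift that idempotent to $A[T]/(f)$ by the monogenic case and push it forward to $B$. Its image in $B/\fa B$ agrees with $\bar e$ modulo nilpotents, and idempotents that differ by a nilpotent are equal.

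\textbf{(3)$\Rightarrow$(2).} I expect this step to be the main obstacle. The plan is as follows.

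1. Since $A\to A'$ is étale, it is quasi-finite. Zariski's Main Theorem then gives a finite $A$-algebra $C$ and an open immersion $\Spec A'\hookrightarrow\Spec C$ (after a standard reduction to finite presentation).
2. The section $\sigma$ determines a closed subscheme of $\Spec(C/\fa C)$ lying in the étale locus. Because it is cut out by a section of an unramified map, it is open and closed, so it is given by an idempotent $\bar e\in C/\fa C$.
3. By (3), lift $\bar e$ to an idempotent $e\in C$. The factor $Ce$ is finite over $A$ and, after the usual shrinking, étale over $A$. Its reduction modulo $\fa$ is $A/\fa$ via $\sigma$.
4. A finite étale $A$-algebra whose reduction modulo $\fa$ is isomorphic to $A/\fa$ is isomorphic to $A$ itself. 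This follows from Nakayama's lemma (the rank is $1$ near $V(\fa)$) together with $\fa\subset$ Jacobson radical, where the latter is itself a consequence of (3) applied to $B=A/(x)$-type algebras. This yields the required $A$-algebra map $A'\to Ce\cong A$ lifting $\sigma$.

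The delicate points are applying ZMT and checking that the idempotent isolates exactly the component through $\sigma$. If this becomes too technical, I would instead cite Stacks Tag 09XI for this implication.
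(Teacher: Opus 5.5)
The paper gives no argument for this proposition; it only cites Stacks Tag 09XI and Raynaud, so your sketch has to stand on its own. Your (2)$\Rightarrow$(1) and the injectivity half of (1)$\Rightarrow$(3) are fine. The (3)$\Rightarrow$(2) outline can also be completed, using the global form of Zariski's Main Theorem. There the ``usual shrinking'' is automatic: $\Spec(Ce)\setminus\Spec A'$ is closed, and its image in $\Spec A$ is closed and misses $V(\fa)$. That image is therefore empty once $\fa$ lies in the Jacobson radical, which (3) gives via $B=A/(1+x)$ with $x\in\fa$.

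The genuine gap is surjectivity in (1)$\Rightarrow$(3), at two points.
\begin{enumerate}
\item \emph{Monogenic case.} Idempotents of $(A/\fa)[T]/(\bar f)$ do not in general come from coprime monic factorizations of $\bar f$. The rank of the summand an idempotent cuts out can vary over $\Spec(A/\fa)$, while monic factors have constant degree. For $A=k\times k$, $\fa=0$, $f=T$, the only monic factorizations are $1\cdot T$ and $T\cdot 1$, and neither yields the idempotent $(1,0)$.
\item \emph{General case, more seriously.} The idempotent you construct lives in $B'/\fa B'$, which is only a quotient of $(A/\fa)[T]/(\bar f)$. Idempotents do not lift along quotients, so the monogenic case says nothing about it. Concretely, let $k$ have characteristic $\neq 2$, $A=k[s]$, $\fa=0$, $B=k\times k$ with $s\mapsto 1$, and $b=(1,0)$. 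The monic $f=T^2-T+\tfrac{1-s}{4}$ kills $b$. Yet $A[T]/(f)\cong k[Y]$ (with $Y=2T-1$, $Y^2=s$) is a domain, so no idempotent of $A[T]/(f)$ maps to $b$.
\end{enumerate}
The choice of $f$ matters, and your argument leaves it arbitrary.

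The missing idea is to let your own Cayley--Hamilton relation choose $f$.
\begin{enumerate}
\item You have $\chi(b^2-b)=0$ with $\chi(X)=X^N+a_1X^{N-1}+\cdots+a_N$ and all $a_i\in\fa$. Put $f(T)=\chi(T^2-T)$. Then $f$ is monic, $f(b)=0$, and $\bar f=T^N(T-1)^N$ is an explicit factorization into coprime monic factors.
\item Lift it by (1) to $f=gh$. Nakayama on the finite $A$-module $A[T]/(f)$ gives $(g,h)=A[T]$. The Chinese remainder theorem then yields an idempotent $\epsilon\in A[T]/(f)$ with $\epsilon\equiv 0\bmod g$ and $\epsilon\equiv 1\bmod h$.
\item Modulo $\fa$, the element $\epsilon-T$ is $-T$ modulo $T^N$ and $1-T$ modulo $(T-1)^N$, hence nilpotent.
\item So the image of $\epsilon$ under $T\mapsto b$ is an idempotent of $B$ whose reduction differs from $\bar e$ by a nilpotent. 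Two idempotents $u,v$ with $u-v$ nilpotent coincide, because $(u-v)^3=u-v$.
\end{enumerate}
This bypasses both the general idempotent--factorization correspondence and the lifting through the quotient $B'/\fa B'$.
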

\begin{proof}
    See \cite{stacks-project}*{\href{https://stacks.math.columbia.edu/tag/09XI}{Lemma 09XI}} or \cite{Raynaud}*{XI, \S2, Proposition~1}. 
\end{proof}

\begin{Defn}
    A pair $(A,\fa)$ satisfying any one of the equivalent conditions in \Cref{Prop: Henselian_pair} is called a \emph{henselian pair}. 
\end{Defn} 

If $A$ is $\fa$-adically complete, then $(A,\fa)$ is a henselian pair (\cite{stacks-project}*{\href{https://stacks.math.columbia.edu/tag/0ALJ}{Lemma 0ALJ}}). For a general pair $(A, \fa)$, which is not necessarily henselian, one can talk about the ``smallest henselian pair'' that contains $(A, \fa)$. Strictly speaking, we consider the next concept. 

\begin{Defn}\label{Defn: henselization_pair}
    Let $(A, \fa)$ be a pair, a \emph{henselization} of $(A, \fa)$ is a henselian pair $(A^h, \fa^h)$ together with a morphism of pairs $\iota: (A, \fa)\to (A^h, \fa^h)$ which satisfies the universal property: 
    
    For every henselian pair $(B, \fb)$ and every morphism $v: (A, \fa)\to (B, \fb)$, $v$ factors uniquely through a morphism $u: (A^h, \fa^h)\to (B, \fb)$, i.e., there is a unique morphism of pair $u$ such that the following diagram is commutative 
    \[
    \begin{tikzcd}
        (A, \fa) \arrow[r, "\iota"] \arrow[rd, "v"']& (A^h, \fa^h) \arrow[d, "\exists ! u"]\\
        & (A, \fa)
    \end{tikzcd}
    \]
    We will usually denote the henselization of $(A, \fa)$ by $(A, \fa)^h$, or even by $A^h$ if there is no danger of confusion. It is easy to see that, when exists, the henselization of a pair is unique up to isomorphism.
\end{Defn}

For the existence of the henselization, we recall the concept of colimits of diagrams and \'etale neighborhoods of pairs. 

\begin{Defn}
Let $\cI$ and $\cC$ be categories. A ($\cI$-)\emph{diagram} in $\cC$ is a functor $M: \cI \to \cC$. We call \emph{index category} of $M$. For each object $i\in \cI$, denote by $M_i$ the object in $\cC$ corresponding to $i$. The \emph{colimit} of the $\cI$-diagram $M$ in $\cC$ is an object $\colimit_{\cI} M$ (or simply $\colimit M$) of $\cC$ together with morphisms $s_i: M_i \to \colimit M$ indexed by objects $i$ in $\cI$ such that 
\begin{enumerate}
    \item for $\phi: i \to i'$ a morphism in $I$, we have $s_i =s_{i'} \circ M(\phi)$; 
    \item for any object $W$ in $\cC$ and any family of morphisms $t_i: M_i \to W$ indexed by $i\in Ob(\cI)$ such that for all $\phi: i \to i'$ in $\cI$ we have $t_{i}=t_{i'} \circ M(\phi)$, there exists a unique morphism $t: \colimit M \to W$ such that $t_i=t \circ s_i$ for every object $i$ of $\cI$.   
\end{enumerate}
\end{Defn} 

\begin{Defn}
A pair over $(R, I)$ is a pair $(S, J)$ together with a morphism $\varphi: (R, I) \to (S, J)$. A pair $(S, J)$ over $(R, I)$ is called an \emph{\'etale neighborhood} of $(R, I)$ if $S$ is an \'etale $R$-algebra, $J=IS$, and the induced residue ring map $R/I \to S/J$ is isomorphic.    
\end{Defn}

Given a pair $(R, I)$, let $\cC$ be the category consisting of pairs over $(R, I)$, and $\cD$ the category consisting of \'etale neighborhoods of $(R, I)$. Then $\cD$ is a subcategory of $\cC$. Denote by $F: \cD \to \cC$ the inclusion functor. Then 

\begin{Lem}\label{Lem: henselization_local}
    Fixing a pair $(R, I)$, let $\cC$ and $\cD$ be as above. 
    \begin{enumerate}
        \item The category $\cD$ is directed. 
        \item $R^h=\colimit F$.
    \end{enumerate}
\end{Lem}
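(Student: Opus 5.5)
We will prove both parts with the standard construction of the henselization as a filtered colimit of étale neighborhoods, in the spirit of \cite{stacks-project} and \cite{Raynaud}. The main tools are: étale algebras are stable under base change and composition; an étale algebra is of finite presentation, so it descends to a stage of a filtered colimit; and characterization (2) of \Cref{Prop: Henselian_pair}.

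\emph{Part (1).} The category $\cD$ is nonempty, since $(R,I)$ is an étale neighborhood of itself.

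Upper bounds: given étale neighborhoods $(S,IS)$ and $(S',IS')$, put $S''=S\otimes_R S'$.
\begin{itemize}
\item $S''$ is étale over $R$.
\item Its residue ring is $S/IS\otimes_{R/I}S'/IS'\cong R/I\otimes_{R/I}R/I=R/I$.
\item The two coprojections make $(S'',IS'')$ dominate both pairs.
\end{itemize}

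Equalizing parallel morphisms: let $f,g\colon S\to S'$ be two morphisms of neighborhoods. Since $R\to S$ is étale, the multiplication map $S\otimes_R S\to S$ identifies $\Spec S$ with an open and closed subscheme of $\Spec(S\otimes_R S)$. So $S$ is a localization $(S\otimes_R S)_e$ at an idempotent $e$. Set
\[
S''=S'\otimes_{S\otimes_R S}S,
\]
where $S'$ is viewed as an $S\otimes_R S$-algebra via $(f,g)$. Then:
\begin{itemize}
\item $S''$ is the localization $S'_{(f,g)(e)}$, hence étale over $S'$ and over $R$.
\item $f$ and $g$ agree after composing with $S'\to S''$.
\item Modulo $I$ both $f$ and $g$ induce the identity of $R/I$. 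Hence the image of $e$ in $S'/IS'=R/I$ is $1$, and $S''/IS''\cong R/I$.
\end{itemize}
Thus $\cD$ is directed (filtered).

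\emph{Part (2).} Define $R^h=\colimit F$, i.e.\ the filtered colimit of the rings $S$, with ideal $I^h=IR^h$. Then $R^h/I^h=\colimit S/IS=R/I$. We check three things.

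\emph{$I^h$ lies in the Jacobson radical.} For $x\in IS$, the localization $S_{1+x}$ is again an étale neighborhood, because $1+x$ becomes $1$ modulo $I$. So $1+x$ is invertible in $R^h$.

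\emph{Lifting property (2) of \Cref{Prop: Henselian_pair}.} Let $R^h\to A'$ be étale, together with an $R^h$-algebra map $\sigma\colon A'\to R/I$.
\begin{itemize}
\item By finite presentation, $A'\cong A_0\otimes_S R^h$ for some neighborhood $S$ and some étale $S$-algebra $A_0$. The map $\sigma$ restricts to $\sigma_0\colon A_0\to R/I$.
\item Since $A_0/IA_0$ is étale over $R/I$ and $\sigma_0$ is a section, $\ker(A_0/IA_0\to R/I)$ is generated by an idempotent $\bar\epsilon$.
\item Choose $h\in A_0$ lifting $1-\bar\epsilon$. Then $(A_0)_h$ is étale over $R$ with residue ring $R/I$, so it is an étale neighborhood of $(R,I)$ lying over $S$.
\item It therefore admits a map $(A_0)_h\to R^h$ compatible with $S\to R^h$. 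Tensoring with $R^h$ over $S$ gives the required map $A'\to R^h$ lifting $\sigma$.
\end{itemize}

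\emph{Universal property.} Let $(B,J)$ be a henselian pair and $v\colon(R,I)\to(B,J)$ a morphism.
\begin{itemize}
\item For each neighborhood $S$, apply (2) to $S\otimes_R B$ and the map $S\otimes_R B\to R/I\otimes_R B\to B/J$. This gives an $R$-algebra map $S\to B$ compatible with $v$.
\item If these lifts are unique, they are automatically compatible with the transition maps. They then assemble into a unique $u\colon R^h\to B$.
\item Uniqueness of $u$ is clear, since any such $u$ is determined on every $S$.
\end{itemize}

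The step I expect to be the main obstacle is the uniqueness of these lifts. My first attempt: two $B$-sections $s,t$ of the étale algebra $S\otimes_R B$ agreeing modulo $J$ correspond to idempotents in $(S\otimes_R B)\otimes_B B$. Using that $\Spec$ of a section of an étale map is open and closed, their equality locus is an open and closed subset of $\Spec B$ containing $V(J)$. Because $J$ lies in the Jacobson radical, every maximal ideal of $B$ lies in $V(J)$, so this open and closed set must be all of $\Spec B$. Hence $s=t$.

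The descent step in the lifting argument is the other delicate point. There the care needed is in choosing $h$ so that the residue ring becomes exactly $R/I$.
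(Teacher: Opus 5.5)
Your proposal is correct and is essentially the standard construction of \cite{stacks-project}*{Lemma 0A02}, which is all the paper's proof consists of. The steps match: filteredness of $\cD$ via tensor products and localization at the diagonal idempotent of $S\otimes_R S$; henselianity of $\colimit F$ via descent of \'etale algebras and splitting off the idempotent cut out by the section; and the universal property via lifting plus uniqueness of sections over a henselian pair.

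Your uniqueness step can be stated more cleanly in algebraic terms. Suppose $s,t\colon S\otimes_R B\to B$ are sections agreeing modulo $J$, and write $\ker s=(1-e)$ with $e$ idempotent. Then $1-t(e)$ is an idempotent lying in $J$, hence in the Jacobson radical, so $t(e)=1$. Thus $t$ factors through $(S\otimes_R B)_e\cong B$, which forces $t=s$.
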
 

\begin{proof}
See \cite{stacks-project}*{\href{https://stacks.math.columbia.edu/tag/0A02}{Lemma 0A02}} and its proof. 
\end{proof}

In the following, we fix a pair $(R,I)$ and one of its \'etale neighborhoods $(R', I')$. Let $\cD'$ be the category of \'etale neighborhoods of $(R', I')$. Since any \'etale neighborhood of $(R', I')$ is also an \'etale neighborhood of $(R, I)$. The category $\cD'$ is a subcategory of $\cD$. Let $G: \cD' \to \cD$ be the corresponding inclusion functor. The following fact is well known to the experts (at least for local case). For the convenience of the reader, a proof is given. 

\begin{Lem}\label{Lem: henselization_of_etale_neighborhood}
Let $(R, I)$, $(R', I')$ be as the above, and let $\cD$ and $\cD'$ be the associated categories. The inclusion functor $G$ is a final functor (See \cite{MacLane} for definition of final functors). Hence 
$$(R')^h:=\colimit (F \circ G) = \colimit F=:R^h.$$
\end{Lem}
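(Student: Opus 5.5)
To show that $G$ is final, I will verify the standard criterion from \cite{MacLane}. For every object $(S,J)$ of $\cD$, the comma category $(S,J)\downarrow G$ must be nonempty and connected. Since $\cD$ and $\cD'$ are directed by \Cref{Lem: henselization_local}(1), with at most one morphism between any two objects over $(R,I)$ (étale neighborhoods have no nontrivial automorphisms compatible with the identification of residue rings), connectedness reduces to a simpler statement. It suffices to show that for every étale neighborhood $(S,J)$ of $(R,I)$ there is an étale neighborhood $(S',J')$ of $(R',I')$ together with a morphism $(S,J)\to(S',J')$ of pairs over $(R,I)$. Given two such arrows into objects of $\cD'$, directedness of $\cD'$ then provides a common target, and uniqueness of morphisms gives commutativity.

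For the existence step I take the tensor product. Set $S'=S\otimes_R R'$ and $J'=I'S'$ ($=IS'$, since $I'=IR'$). The map $R'\to S'$ is étale as a base change of the étale map $R\to S$. For the residue condition, compute
\[
S'/J'\cong (S/IS)\otimes_{R/I}(R'/IR')\cong (R/I)\otimes_{R/I}(R/I)\cong R/I\cong R'/I'.
\]
The composite map $R'/I'\to S'/J'$ is therefore an isomorphism, so $(S',J')$ is an étale neighborhood of $(R',I')$. The canonical map $S\to S'$ sends $J=IS$ into $IS'=J'$, and it is compatible with the structure maps from $(R,I)$, so it is a morphism in $\cD$ (equivalently in $\cC$).

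The remaining point is the uniqueness of morphisms between étale neighborhoods, which makes the comma categories connected. I would justify it by the rigidity of étale maps. Two $R$-algebra maps $S\to T$ of étale $R$-algebras that agree modulo $IT$ (both induce the fixed identification with $R/I$) correspond to two sections of the étale map $T\to S\otimes_R T$. Such sections are determined by open-and-closed idempotents, which are detected after reduction mod $I$ only if $(T,IT)$ is henselian. This is the main obstacle, since $T$ need not be henselian. To handle it I would instead appeal directly to the proof of Stacks 0A02, which constructs the colimit over a category that is directed in the sense of filtered categories. Finality of $G$ then needs only the existence step above together with the fact that two parallel arrows $(S,J)\rightrightarrows G(S',J')$ are equalized by some morphism in $\cD'$. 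That fact follows because the equalizer locus is an open and closed subscheme containing $V(J')$, and localizing $S'$ at a suitable element (keeping the residue isomorphism) gives an object of $\cD'$. With finality established, the equality of colimits $\colimit(F\circ G)=\colimit F$ is the standard consequence for final functors, and the left-hand side is $(R')^h$ by \Cref{Lem: henselization_local}(2) applied to $(R',I')$.
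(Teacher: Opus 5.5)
Your final argument is correct, but it proves connectedness of the comma categories $(S,J)\downarrow G$ differently from the paper. The existence step via $S\otimes_R R'$ is the same in both.

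For connectedness, the paper takes two objects $\phi'\colon S\to S'$ and $\phi''\colon S\to S''$ and forms $S'\otimes_{S\otimes_R R'}S''$. This ring is étale over $R'$, because $S\otimes_R R'\to S'$ is a map of étale $R'$-algebras and hence étale, and base change preserves étaleness. Its reduction modulo $I'$ is $R'/I'$. The two composites from $S$ agree in it by the very construction of the tensor product. So one cospan gives both the common target and the commutativity, without using filteredness of $\cD'$ or any equalizer.

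You instead use filteredness of $\cD'$ (\Cref{Lem: henselization_local}(1) applied to $(R',I')$) to get a common target $T$. You then equalize the resulting parallel arrows $a,b\colon S\rightrightarrows T$, which is the standard cofinality criterion for a functor out of a filtered category. Your equalizing step is valid, and it is worth writing out:
\begin{itemize}
\item Since $S$ is étale over $R$, the diagonal is open and closed, giving an idempotent $e_\Delta\in S\otimes_R S$ with $(s\otimes 1-1\otimes s)e_\Delta=0$.
\item Its image $e=(a\otimes b)(e_\Delta)\in T$ satisfies $(a(s)-b(s))e=0$.
\item Also $e\equiv 1 \bmod I'T$, because $a$ and $b$ are $R/I$-algebra maps out of $S/IS\cong R/I$ and so agree modulo $I'T$.
\item Hence $T\to T_e$ is a morphism in $\cD'$ (the residue ring is unchanged) that equalizes $a$ and $b$.
\end{itemize}
Your route is the generic cofinality argument but relies on more input (Stacks 0A02 and the unramified diagonal). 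The paper's pushout is more economical and self-contained.

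You should delete your opening reduction outright rather than treat it as an unproven step. The claim that there is at most one morphism between two étale neighborhoods is false outside the local, connected setting. Take $g\in I$ not nilpotent. Then $S=R\times R_g\times R_g$ is an étale neighborhood of $(R,I)$, since $R_g/IR_g=0$. Swapping the last two factors is a nontrivial $R$-algebra automorphism of $S$ that is compatible with the residue identification. So commutativity in the comma category cannot come from rigidity; it genuinely needs the equalizer step (or the paper's pushout).
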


\begin{proof}
The isomorphism in the middle of the equation is an immediate consequence if $G$ is a final functor. Hence the proof is reduced to checking that $G$ is final. For this, we will need to show the following: 
\begin{enumerate}
    \item for each object $(S, J)$ of $\cD$, there exists an object $(S',J')$ of $\cD'$ and a morphism $(S, J)\to G(S', J')=(S', J')$ in $\cD$.
    \item for any two such morphisms, say $\phi': (S, J)\to (S', j')$ and $\phi'': (S, J)\to (S'', j'')$, there exist finitely many objects $\{(S'_i, J'_i)\}_{i=0, \dots, n}$ such that 
    \begin{enumerate}
        \item $n$ is even, $(S_0', j'_0)=(S', J')$, $(S'',J'')=(S'_n, J'_n)$;
        \item for each even integer $m\in \{1, \dots, n\}$, there are two morphisms $f_{m-1}, (S_m', J_m')\to (S_{m-1}', J_{m-1}')$ and $f_{m}: (S_m', J_m')\to (S_{m+1}', J_{m+1}')$
        \item for each $i$, a morphism $\phi_i: (S, J)\to G(S_i', J_i')=(S_i', J_i')$; moreover, $\phi_1=\phi'$, $\phi_n=\phi''$
    \end{enumerate}
    and the above objects together with the morphisms fit into a commutative diagram as follow 
    \[
    \begin{tikzcd}
        &&&(S, J)\arrow[dlll, "\phi_0=\phi'"']\arrow[dll, "\phi_1"']\arrow[dl, "\phi_2"']\arrow[rd, "\phi_{2k}"]\arrow[drrr, "\phi_n"]&&&\\
        (S'_0, J'_0) \arrow[r, "f_0"']& (S_1', J_1') & (S_2', J_2')\arrow[l, "f_1"] \arrow[r, "f_2"']& \cdots & (S_{2k}', J_{2k}') \arrow[l, "f_{2k-1}"] \arrow[r, "f_{2k}"']& \cdots & (S_n', J_n')\arrow[l, "f_{n-1}"]
    \end{tikzcd}
    \]
\end{enumerate}
For (1), take any object $(S,J)$ of $\cD$, let $S'=S \otimes_R R'$ and $J'=I'(S \otimes_R R')$. Then by basic property of \'etale morphism is stable under tensor product, we know that $S'$ is \'etale over $R'$. Also notice that $S'/J'=(S\otimes_R R')/(S\otimes_R I')=S\otimes_R R'/I'=S/J$. Hence $(S', J')$ is an object of $\cD'$. Moreover, the tensor construction of $(S', J')$ induces a morphism $(S, J) \to (S' J')=G(S', J')$. 

As for (2), suppose  $\phi': (S, I) \to (S', J')$ and $\phi'': (S, I) \to (S'', J'')$ to be two morphisms in $\cD$, where $(S', J')$ and $(S'', J'')$ are objects of $\cD'$. Consider the tensor product $S'\otimes_{S \otimes_R R'} S''$. We claim that $S'\otimes_{S \otimes_R R'} S''$ is \'etale over $R'$. Indeed, in the commutative diagram 
\[
\begin{tikzcd}
    & & S'\arrow[rd, "g'"] & \\
    R' \arrow[r, "\id\otimes \iota"]  & R'\otimes_R S \arrow[rd, "g\otimes \phi''"'] \arrow[ru, "f \otimes \phi'"]  & & S'\otimes_{S \otimes_R R'} S''\\
    & & S'' \arrow[ru, "f'"']&
\end{tikzcd}
\]
every homomorphism is \'etale. Here $\iota: R\to S$, $f: R'\to S'$ and $g: R'\to S''$ are the natural homomorphisms give rise the \'etale structures. And  $f\otimes \phi': R'\otimes_R S\to S'$ and $g\otimes \phi'': R'\otimes_R S\to S''$ are naturally induced homomorphisms. Since both $R'\otimes_R S$ and $S'$ are all \'etale over $R'$, and since $f\otimes \phi'$ is a homomorphism $R'$-algebras, thus it is \'etale by the standard property of \'etale maps (see \cite{stacks-project}*{\href{https://stacks.math.columbia.edu/tag/00U7}{Lemma 00U7}}). By the same argument $g\otimes \phi''$ is also \'etale. Then as the homomorphisms induced by tensor product, $f'$ and $g'$ are also \'etale. This verifies our claim. 

Now consider the pair $(S_1', J_1'):=(S'\otimes_{S \otimes_R R'} S'', I'(S'\otimes_{S \otimes_R R'} S''))$, note that 
\[
S'\otimes_{S \otimes_R R'} S''/I'(S'\otimes_{S \otimes_R R'} S'')=S'/I'S'\otimes_{S\otimes_R R'} S''= (S\otimes_R R')/I(S\otimes R')\otimes_{S\otimes_R R'} S''=S''/I''=R'/I'. 
\]
This tells that $(S_1', J_1')$ is an \'etale neighborhood of $(R', I')$, hence an object in $\cD'$.

We take $n=2$, and denote $(S_i', J_i')$ $(i=0,1,2)$ as (a). Consider the next diagram 
\[
\begin{tikzcd}
(S, J) \arrow[r, equal] \arrow[d, "\phi'"] & (S, J) \arrow[d, "\phi_1"] & (S, J) \arrow[l, equal] \arrow[d, "\phi''"] \\
(S', J') \arrow[r, "g'"]  & (S'\otimes_{S \otimes_R R'} S'', I'(S' \otimes_{S \otimes_R R'} S'')) 
& (S'', J'') \arrow[l, "f'"'] 
\end{tikzcd} 
\]
where the homomorphism $\phi_1$ is the composite of $S\to R'\otimes_R S\overset{f}{\to}S' \overset{g'}{\to}S'\otimes_{S \otimes_R R'} S''$. Thus the  commutativity of this diagram follows from that of the previous diagram. Hence this commutative diagram matches the requirements of (b) and (c) of (2). As an immediate consequence, $G: \cD' \to \cD$ is a final functor, and this ends the proof of the lemma. 
\end{proof}

\begin{Lem}\label{Lem: completion_henselization_flat_I}
Let $A$ be a Noetherian ring, let $\fa$ be an ideal of $A$. Let $(A^h, \fa^h)$ and $(\hat{A}, \hat{\fa})$ be the associated henselization and completion respectively. 
Then: 
  \begin{enumerate} 
    \item $\fa^h=\fa A^h$, $A/\fa^i\cong A^h/\fa^iA^h$ for any $i>0$; 
    \item $\widehat{A^h}\cong \hat{A}$, where $\widehat{A^h}$ is the $\fa^h$-adic completion of $A^h$; 
    \item $A^h$ is flat over $A$, $\hat{A}$ is faithfully flat over $A^h$; 
    \item if $\fa$ is contained in the Jacobson radical of $A$, then both $A^h$ and $\hat{A}$ are faithfully flat over $A$.
  \end{enumerate}
\end{Lem}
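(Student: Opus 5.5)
The plan is to use the explicit description $A^h=\colimit F$ from \Cref{Lem: henselization_local}. Here $F$ runs over the \'etale neighborhoods $(S,J)$ of $(A,\fa)$, and the category $\cD$ of these is directed. All four statements should follow from standard facts about \'etale maps and directed colimits, together with the corresponding facts for the completion.

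For (1), fix an \'etale neighborhood $(S,J)$, so $J=\fa S$ and $A/\fa\to S/\fa S$ is an isomorphism. Since $A\to S$ is flat, $\fa^i S/\fa^{i+1}S\cong \fa^i/\fa^{i+1}\otimes_{A/\fa} S/\fa S\cong \fa^i/\fa^{i+1}$. Induction on $i$ with the five lemma then gives $A/\fa^i\cong S/\fa^iS$. A directed colimit of isomorphisms is an isomorphism and commutes with tensor products, so $A/\fa^i\cong A^h/\fa^iA^h$. Likewise $\fa^h=\colimit \fa S=\fa A^h$. The one point to check is that the ideal attached to the colimit pair is exactly $\colimit J$; this is the construction in the cited Stacks lemma.

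For (2), the completion depends only on the inverse system of quotients, so by (1) we have $\widehat{A^h}=\varprojlim A^h/\fa^iA^h\cong\varprojlim A/\fa^i=\hat A$. For (3), each $S$ is flat over $A$, and a directed colimit of flat modules is flat, so $A^h$ is flat over $A$.

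The faithful flatness of $\hat A$ over $A^h$ is the main obstacle. One cannot simply cite the fact that completion is faithfully flat over a Noetherian ring for an ideal in its Jacobson radical, because $A^h$ need not obviously be Noetherian. I would first try to prove that $A^h$ is Noetherian; this is a known theorem for henselizations of Noetherian pairs, in Stacks 0AGV. Then $\hat A=\widehat{A^h}$ is flat over $A^h$. Since $(A^h,\fa A^h)$ is henselian, $\fa A^h$ lies in the Jacobson radical of $A^h$ by \Cref{Prop: Henselian_pair}(1), and the standard criterion (\cite{Atiyah-Macdonald}*{Ex.~10.7} or Matsumura) gives faithful flatness. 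If one prefers to avoid Noetherianity of $A^h$, an alternative is available. Flatness of $A^h\to\hat A$ can be deduced from flatness of the finite-level maps $S\to\hat S=\hat A$. Faithfulness then follows because every maximal ideal of $A^h$ contains $\fa A^h$ and hence comes from a maximal ideal of $A^h/\fa A^h\cong\hat A/\hat\fa$, so it lies under a maximal ideal of $\hat A$.

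For (4), assume $\fa$ lies in the Jacobson radical of $A$. Then $\hat A$ is faithfully flat over $A$ by the classical result for Noetherian rings (\cite{Atiyah-Macdonald}*{Prop.~10.14, Ex.~10.7}). Now $A\to A^h$ is flat by (3), and $A\to A^h\to\hat A$ is faithfully flat. For a flat map, faithfulness is equivalent to every maximal ideal lying under some prime. Any maximal ideal $\fm$ of $A$ has a prime of $\hat A$ over it, and the contraction of that prime to $A^h$ lies over $\fm$. So $A^h$ is faithfully flat over $A$.
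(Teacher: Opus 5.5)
Your proposal is correct. It differs from the paper mainly in that you give arguments where the paper only cites. The paper refers to Stacks 0AGU for (1), Stacks 0AGV for (2) and (3), and Greco's Theorem 6.5 for (4).

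For (1)--(3) you reconstruct what lies behind the Stacks lemmas, working from the colimit description $A^h=\colimit S$ over \'etale neighborhoods. Flatness of $S$ gives $\fa^iS/\fa^{i+1}S\cong \fa^i/\fa^{i+1}$, hence $A/\fa^i\cong S/\fa^iS$. You then pass to the directed colimit and compare completions. Your first route to faithful flatness of $A^h\to\hat A$ still cites 0AGV for Noetherianity of $A^h$, so there it agrees with the paper.

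Your second route to that faithful flatness is a genuine improvement and worth keeping. Each $S$ is Noetherian with $\hat S=\hat A$, so $\hat A$ is flat over every $S$. Flatness then passes to $A^h=\colimit S$: lift any relation $\sum r_k m_k=0$ to some $S$, trivialize it there, and push the trivialization back to $A^h$. You state this step without justification; it is standard, but you should say why it holds. Faithfulness follows because $\fa A^h$ lies in the Jacobson radical and $A^h/\fa A^h\cong \hat A/\hat{\fa}$. This route avoids Noetherianity of $A^h$ entirely, and in fact recovers it by faithfully flat descent from $\hat A$.

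The step that genuinely departs from the paper is (4). Instead of invoking Greco, you get it formally from (3) plus the classical faithful flatness of $\hat A$ over $A$ when $\fa$ is in the Jacobson radical. Every maximal ideal of $A$ lies under a prime of $\hat A$, hence under its contraction to $A^h$.

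The trade-off is this: the paper's citations are shorter. Your version makes the lemma self-contained modulo standard facts about completions of Noetherian rings, and it shows that (4) needs no input beyond (3).
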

\begin{proof}
    See \cite{stacks-project}*{\href{https://stacks.math.columbia.edu/tag/0AGU}{Lemma 0AGU}} for (1), \cite{stacks-project}*{\href{https://stacks.math.columbia.edu/tag/0AGV}{Lemma 0AGV}} for 
    (2), (3), and \cite{Greco}*{Theorem~6.5} for (4). 
\end{proof}

Since the map $A^h\rightarrow \hat{A}$ is faithfully flat, it is injective. If $\bigcap_{i=1}^{\infty}\fa^i=(0)$, then the map $A\rightarrow \hat{A}$ is injective, which implies the injectivity of the morphism $A\rightarrow A^h$. In this case we consider $A$ and $A^h$ as subrings of $\hat{A}$. In particular the condition $\bigcap_{i=1}^{\infty}\fa^i=(0)$ holds when $A$ is a Noetherian domain or a Noetherian local ring. 

Recall that in \Cref{Sect: automaticity}, if $A$ is a domain, we let $A^a$ be the {\it algebraic closure} of $A$ in $\hat{A}$, i.e., the set of elements of $\hat{A}$ that are annihilated by a nonzero polynomial over $A$. The next result reveals the relation between $A^h$ and $A^a$. 

\begin{Prop}\label{Prop: A^h=A^a}
Let $A$ be a finitely generated integral domain over a field, $\fa$ an ideal of $A$ satisfying $\depth \fa=\dim A=d$. Let $(A^h, \fa^h)$ and $(\hat{A},\hat{\fa})$ be henselization and completion of $(A,\fa)$ respectively, and let $A^a$ be the algebraic closure of $A$ in $\hat{A}$. Then $A^h=A^a$.   
\end{Prop}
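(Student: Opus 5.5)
We prove the two inclusions $A^h\subseteq A^a$ and $A^a\subseteq A^h$ separately, viewing both as subrings of $\hat{A}$ through the injective faithfully flat map $A^h\to\hat{A}$ of \Cref{Lem: completion_henselization_flat_I}.

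\emph{The inclusion $A^h\subseteq A^a$.} By \Cref{Lem: henselization_local}, $A^h$ is the filtered colimit of the étale neighborhoods $(S,\fa S)$ of $(A,\fa)$. So every $z\in A^h$ is the image of an element $s$ of some étale $A$-algebra $S$ under the canonical map $S\to A^h\subseteq\hat{A}$. An étale algebra is of finite type and quasi-finite over $A$. By Zariski's main theorem, the subring $A[s]\subseteq S$ is therefore generically finite over $A$: it is contained in a localization of a finite $A$-algebra. Hence $s$ satisfies a polynomial $P\in A[T]$ with nonzero leading coefficient, say $c\,s^n+\cdots=0$ in $S$.

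A simpler route is to base change to $K=\operatorname{Frac}(A)$. Then $S\otimes_A K$ is étale, hence a finite product of finite separable field extensions of $K$, so $s\otimes 1$ is algebraic over $K$. Clearing denominators gives a nonzero $P\in A[T]$ with $P(s)$ killed by some nonzero $a\in A$. Since $\hat{A}$ is flat over the domain $A$, nonzero elements of $A$ are nonzerodivisors on $\hat{A}$. Therefore the image $z$ of $s$ satisfies $P(z)=0$ in $\hat{A}$, and $z\in A^a$.

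\emph{The inclusion $A^a\subseteq A^h$.} Let $y\in\hat{A}$ be algebraic. Then $C=A[y]\subseteq\hat{A}$ is a domain of finite type with $\operatorname{Frac}(C)$ finite over $K$. We want to show that the map $C\to\hat{A}$ factors through $A^h$. The plan is to use Artin approximation for henselian pairs, or equivalently Popescu's theorem: $A$ is excellent, so $A\to\hat{A}$ is regular and $A^h\to\hat{A}$ is regular too. In particular $\hat{A}$ is a filtered colimit of smooth $A^h$-algebras, and $(A^h,\fa^h)$ has the approximation property. Write $C=A[T]/\mathfrak{p}$ with $\mathfrak{p}=(f_1,\dots,f_m)$. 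Then $y$ is a solution in $\hat{A}$ of the system $f_j=0$, so approximation produces $y'\in A^h$ with $f_j(y')=0$ and $y'\equiv y \bmod \fa^N$ for every prescribed $N$. The remaining task is to upgrade this to $y'=y$.

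For that step we use the hypotheses $\dim A=\depth\fa=d$, which make every maximal ideal of $\hat{A}$ lie over $\fa$ and make $\hat{A}$ reduced. By \Cref{Lem: dim_height_depth_in_completion}, $\hat{A}$ is a finite product of complete local rings $\hat{A}_i$ of dimension $d$. The fibre $C\otimes_A K$ is finite, so $C$ has only finitely many points over the generic point, and only finitely many of their residue fields embed in $\operatorname{Frac}$ of each component of $\hat{A}$. Choosing $N$ large enough that the finitely many roots of the minimal polynomial of $y$ inside $\hat{A}$ are already separated modulo $\fa^N$, we get $y'=y$. The separation uses that the differences of distinct roots are nonzero elements of $\hat{A}$, and $\bigcap_N\hat{\fa}^N=0$.

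Concretely, I would argue as follows. Take $P\in A[T]$ nonzero with $P(y)=0$. By \Cref{Lem: completion_henselization_flat_I}, $A^h\to\hat{A}$ is faithfully flat, so $A^h$ is a domain-like subring, reduced with finitely many minimal primes. Over each factor $\hat{A}_i$, the roots of $P$ are finite in number after dividing out the leading coefficient, which is a nonzerodivisor.

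The main obstacle is exactly this uniqueness step: passing from an approximate solution in $A^h$ to the exact one. It requires either Artin–Popescu approximation, which is a heavy input, or a direct argument. The direct argument I would try first is to factor $P$ over the henselian pair $(A^h,\fa^h)$ using condition (1) of \Cref{Prop: Henselian_pair}, after reducing to the monic separable case by normalizing and localizing at the discriminant. This works where the roots are simple modulo $\fa$ but breaks down at inseparability and ramification. That breakdown is why I expect an appeal to approximation, together with the faithful flatness in \Cref{Lem: completion_henselization_flat_I}, to be necessary.
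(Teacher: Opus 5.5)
Your proof is correct, but for the hard inclusion $A^a\subseteq A^h$ it takes a genuinely different route from the paper.

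\textbf{How the two routes differ.} The paper first localizes, using $A^h\cong\prod_i A_i^h$ (\Cref{Lem: henselization_radical_intersection}) and $\hat A\cong\prod_i\hat A_i$. It then treats a normal local $A$, where $\hat A$ is a domain. There a nonzero polynomial has finitely many roots, so Artin's theorem plus separation of roots forces $\hat y\in A^h$. Finally it removes normality by passing to the normalization $C$ and a descent diagram.

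You stay global. Approximation for the henselian pair $(A^h,\fa^h)$ gives a root $y'\in A^h$ of $P$ with $y'\equiv y \bmod \hat{\fa}^N$. You then replace ``$\hat A$ is a domain'' by two facts: $\hat A$ is reduced, and the leading coefficient of $P$ is a nonzerodivisor on $\hat A$ (flatness of $A\to\hat A$), hence lies in no minimal prime. Together these bound the roots of $P$ in $\hat A$ by $(\deg P)^t$, where $t$ is the number of minimal primes. Krull's intersection theorem then gives $y'=y$.

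\textbf{What each approach buys.} Your route needs no normalization and no descent. This matters, because the paper's descent step as written uses that $C$ is faithfully flat over $A$. That fails whenever $A$ is not normal: if $A\subseteq C\subseteq\operatorname{Frac}(A)$ is faithfully flat and $b/a\in C$, then $b\in aC\cap A=aA$, so $C=A$. Your argument also never actually uses $\depth\fa=\dim A$. The price is a heavier approximation input. For a non-local pair you need Popescu's theorem, or else Artin's theorem applied factor by factor after the paper's product decomposition.

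Your generic-fibre argument for $A^h\subseteq A^a$ is also sound. It is more careful than the paper's remark that étale $A$-algebras are finite $A$-modules, which is false in general (e.g.\ $A\to A_f$).

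\textbf{Side claims to correct.} None of these is load-bearing.
\begin{itemize}
\item $A[y]\subseteq\hat A$ need not be a domain, because $\hat A$ need not be one: idempotents of $\hat A\cong\prod_i\hat A_i$ are algebraic. So work with the ideal of relations and an arbitrary nonzero $P$, as in your ``concretely'' paragraph, rather than a minimal polynomial.
\item Reducedness of $\hat A$ does not follow from $\depth\fa=\dim A$. Cohen--Macaulayness excludes embedded primes but not nilpotents at minimal primes. Reducedness comes from excellence of $A$, i.e.\ regularity of $A\to\hat A$, which you invoke anyway.
\item Your last paragraph misplaces the difficulty. Approximation is what produces $y'$. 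The uniqueness step needs only finiteness of the root set and $\bigcap_N\hat{\fa}^N=0$, so nothing remains open there, and the Hensel-factorization detour can be dropped.
\end{itemize}
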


\Cref{Prop: A^h=A^a} should be well known to experts. However, due to the lack of reference, we will provide a proof to this result at the end of this subsection after some preparation.

Recall that a Noetherian ring $R$ is said to be \emph{excellent} if it is a universally catenary G-ring, and for every finitely generated $R$-algebra $R'$, the regular locus ${\rm Reg}(R')$ in $\Spec(R')$ is open. To save the length of this paper, we refer the interested reader to the classical book of Matsumura \cite{Matsumura-commutative-ring-theory-1986}*{\S34} for the detailed definitions of the terminologies. The needed properties about excellent rings are cited below. 

\begin{Prop}\label{Prop: excellent}
    \begin{enumerate}
        \item The class of excellent rings contains fields, Noetherian complete local rings and the integer ring $\bZ$.
        \item The class of excellent rings is closed under localization, finitely generated extensions of rings, and passing to quotients. 
        \item Assume $R$ to be an excellent ring, and let $S$ be a finitely generated integral $R$-algebra. Take $S'$ to be the integral closure of $R$ in the fraction field of $S$. Then $S'$ is a finitely generated module of $R$. 
    \end{enumerate}
\end{Prop}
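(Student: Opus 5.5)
For (1) and (2) I will simply cite the standard references and not reprove anything. Fields and $\bZ$ are excellent because they are Dedekind domains of characteristic zero or fields. Complete Noetherian local rings are excellent, which is \cite{Matsumura-commutative-ring-theory-1986}*{\S34}. Stability of excellence under localization, passage to quotients and finitely generated algebras is also in \cite{Matsumura-commutative-ring-theory-1986}*{\S34}, and ultimately in EGA~IV, \S7.8. The real content is item (3), which I will deduce from the fact that excellent rings are Nagata, i.e.\ universally Japanese.

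I read ``finitely generated integral $R$-algebra'' as follows: $S$ is a domain that is finitely generated as an $R$-algebra and integral over $R$. Hence $S$ is a finite $R$-module. The first step replaces $R$ by its image $R_0 \subset S$. This ring is a domain and a quotient of $R$, so it is excellent by (2). The integral closure of $R$ in $\operatorname{Frac}(S)$ coincides with that of $R_0$, because the action of $R$ on any $R$-module factors through $R_0$. Moreover, a finitely generated $R_0$-module is a finitely generated $R$-module. So we may assume $R \subset S$ is a domain. Since $S$ is finite over $R$, the extension $L := \operatorname{Frac}(S)$ is a finite field extension of $K := \operatorname{Frac}(R)$. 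By transitivity of integrality, the integral closure $S'$ of $R$ in $L$ also equals the integral closure of $S$ in $L$.

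The second step is to show that $R$ is Japanese, meaning that for every finite extension $L/K$ the integral closure of $R$ in $L$ is a finite $R$-module. This is exactly the assertion $S'$ finite over $R$. Here I would invoke the theorem that a Noetherian universally catenary G-ring with open regular loci in finitely generated algebras is a Nagata ring. This is \cite{Matsumura-commutative-ring-theory-1986}*{\S34}, the statement that excellent rings are Nagata, or EGA~IV, 7.8.3(vi).

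If one wants an outline rather than a bare citation, I would argue through Nagata's criterion. A Noetherian domain $R$ is Japanese provided two conditions hold for every finite extension domain $R'$ of $R$. First, the normal locus of $R'$ contains a nonempty open set. Second, the formal fibres of $R$ are geometrically reduced, which is the G-ring condition. The first condition follows from the openness of the regular locus of $R'$, which is guaranteed by excellence, together with the fact that the generic point is regular. The second condition is part of the definition of excellence.

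I expect the main obstacle to be this implication from excellence to the Nagata property. It is a genuinely nontrivial theorem, resting on Nagata's criterion and the analytic unramifiedness of local rings of $R$. For that reason I would cite it rather than reprove it. The remaining reductions in the first step are routine.
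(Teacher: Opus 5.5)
Your proposal matches the paper, which gives no argument of its own for this proposition and simply refers to Matsumura's \S34. Your reduction of (3) to the theorem that excellent (indeed quasi-excellent) rings are Nagata, hence Japanese, is exactly what that reference supplies.

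One caveat on scope: you read ``integral'' as also meaning ``integral over $R$''. If it means only ``integral domain'', you need one more line: any element of $\operatorname{Frac}(S)$ integral over $R_0$ lies in the relative algebraic closure $L_0$ of $K$ in $\operatorname{Frac}(S)$, and $L_0$ is finite over $K$ because $\operatorname{Frac}(S)$ is a finitely generated field extension of $K$, so the Japanese property of $R_0$ again gives finiteness. In the paper's only use of (3), $S=R$, so the two readings agree there.
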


The next key lemma is due to Artin, we cite it in the form that is suitable for our application. 

\begin{Lem}\cite{Artin-approximation}*{Theorem~(I.10), Artin’s approximation theorem}
    Let $R$ be a field or an excellent discrete valuation ring, and let $(A, \fp)$ be a pair such that $A$ is an $R$-algebra of finite type, and $\fp$ is a prime ideal of $A$. Take $(A^h, \fp^h)$ and $(\hat{A}, \hat{\fp})$ to be the henselization and completion respectively. Given an arbitrary system of polynomial equations 
    \begin{equation}
        f(T)=0, \quad T=(T_1, \dots, , T_N)
    \end{equation}
    with coefficients in $A^h$, a solution $\hat{t}=(\hat{t}_1, \dots, \hat{t}_N)$ with $\hat{t}_i\in \hat{A}$, and an integer $c$, there exists a solution $t=(t_1, \dots, t_N)$ with $t_i\in A^h$ with 
    \[
    t_i\equiv \hat{t}_i \ (\bmod\ \fp^c) \text{ for all }i. 
    \]
\end{Lem}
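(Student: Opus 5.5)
The plan is the modern route to Artin approximation, via General Néron Desingularization (Popescu's theorem). Artin's original argument is not needed. The idea is to factor the formal solution through a smooth $A^h$-algebra, and then use the henselian property of $(A^h,\fp^h)$ to lift a section from a truncation.

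\textbf{Step 1: excellence.} The ring $R$ is a field or an excellent DVR, and $A$ is of finite type over $R$. By \Cref{Prop: excellent}(1),(2), $A$ is excellent. In particular $A$ is a Noetherian G-ring, so the completion map $A\to\hat{A}$ is a regular homomorphism: it is flat with geometrically regular fibres. This uses the standard fact that for a G-ring the $I$-adic completion map is regular for every ideal $I$, not only for maximal ideals.

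\textbf{Step 2: regularity of $A^h\to\hat{A}$.} By \Cref{Lem: completion_henselization_flat_I}, $\hat{A}$ is also the completion of $A^h$ and is faithfully flat over it. By \Cref{Lem: henselization_local}, $A^h$ is a filtered colimit of étale $A$-algebras. Take a prime $\fq$ of $A^h$ lying over $\fp_0\subset A$. The fibre of $A^h\to \hat{A}$ over $\fq$ is a localization of $\hat{A}\otimes_A\kappa(\fp_0)$ base changed along $\kappa(\fp_0)\to\kappa(\fq)$. Since $A\to A^h$ is ind-étale, $\kappa(\fq)$ is a separable algebraic extension of $\kappa(\fp_0)$. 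Geometric regularity is stable under such base change, so $A^h\to\hat{A}$ is regular.

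\textbf{Step 3: factor the solution through a smooth algebra.} By Popescu's theorem, $\hat{A}$ is a filtered colimit of smooth $A^h$-algebras. The solution $\hat t$ defines an $A^h$-algebra map $A^h[T]/(f)\to\hat{A}$. Since $A^h[T]/(f)$ is finitely presented, this map factors as $A^h[T]/(f)\to C\to\hat{A}$ with $C$ smooth over $A^h$.

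\textbf{Step 4: lift a truncated section.} Compose to get
\[
C\to\hat{A}\to\hat{A}/\fp^c\hat{A}\cong A^h/\fp^cA^h,
\]
using \Cref{Lem: completion_henselization_flat_I}(1). The pair $(A^h,\fp^cA^h)$ is henselian, since henselianity depends only on the radical of the ideal. For a henselian pair, any $A^h$-algebra map from a smooth algebra to the quotient lifts to an $A^h$-algebra map $C\to A^h$. This is the smooth analogue of \Cref{Prop: Henselian_pair}(2), reduced to the étale case via the local structure of smooth morphisms (Stacks, tag 07M7). Let $t$ be the image of $T$ under $A^h[T]/(f)\to C\to A^h$. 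Then $f(t)=0$, and $t_i\equiv\hat t_i$ modulo $\fp^c\hat{A}$ by construction.

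The main obstacle is Step 3. Popescu's theorem is deep, and I would cite it rather than reprove it. Artin's original proof avoided it using Néron's desingularization in the discrete valuation case together with an induction on dimension, which is why the hypothesis restricts $R$ to a field or an excellent DVR. A secondary point needing care is Step 2, where the henselization is along a non-maximal prime $\fp$. There I would verify the fibrewise separability claim directly from the colimit description in \Cref{Lem: henselization_local}.
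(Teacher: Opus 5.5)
Your argument is correct, but it is not the paper's route. The paper gives no proof of this lemma; it simply cites Artin's Theorem (1.10). Artin's proof predates Popescu and works directly with henselizations of local rings essentially of finite type over a field or an excellent DVR, using Néron's desingularization in the DVR case together with an induction on dimension.

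Your proof is the modern one: regularity of $A^h\to\hat{A}$, then Popescu's General Néron Desingularization to factor $A^h[T]/(f)\to\hat{A}$ through a smooth $A^h$-algebra $C$, then lifting $C\to A^h/\fp^cA^h$ along the henselian pair $(A^h,\fp^cA^h)$. What it buys is generality. It proves approximation for every Noetherian henselian pair whose completion map is regular. In particular it covers the statement literally as written, for the henselization of the possibly non-local pair $(A,\fp)$. Artin's (1.10), by contrast, is formulated for the henselization of the local ring $A_\fp$, which is the only case the paper actually uses, in \Cref{Prop: algebraic_closure}. The price is that your key input, Popescu's theorem, is considerably deeper than the statement being proved, so nothing is gained in self-containedness.

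You can also drop Step 2 entirely. The coefficients of $f$ already lie in some étale neighborhood $(A',\fp A')$ of $(A,\fp)$. Such an $A'$ is of finite type over $R$, hence excellent, and it has the same completion $\hat{A}$, since $A/\fp^n\cong A'/\fp^nA'$ for all $n$. Apply Popescu to the regular map $A'\to\hat{A}$, then base change the resulting smooth $A'$-algebra to $A^h$ before lifting the section in Step 4.
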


As the first step of proving \Cref{Prop: A^h=A^a}, we consider the local case. Until \Cref{Prop: algebraic_closure_revisited}, we make the following assumption: 
\begin{center}
    \emph{$A$ is a local domain with maximal ideal $\fm$ which comes from a localization of a finitely generated algebra over a field. }
\end{center}

\begin{Prop}\label{Prop: algebraic_closure}
Take $(A^h, \fm^h)$ and $(\hat{A}, \hat{\fm})$ to be the henselization and completion of $(A, \fm)$ respectively. Assume that $A$ is a normal integral domain, then  $A^h$ is the algebraic closure of $A$ in $\hat{A}$. 
\end{Prop}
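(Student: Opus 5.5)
The plan is to prove the two inclusions $A^h\subseteq A^a$ and $A^a\subseteq A^h$ inside $\hat{A}$. Both $A$ and $A^h$ sit in $\hat{A}$ by \Cref{Lem: completion_henselization_flat_I} and the remarks after it.

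For $A^h\subseteq A^a$, I would use \Cref{Lem: henselization_local}, which writes $A^h$ as the filtered colimit of étale neighborhoods $(S,\fm S)$ of $(A,\fm)$. Any $z\in A^h$ is then the image of some element of such an $S$. Since $S$ is étale over $A$, it is in particular of finite type, so $z$ is a root of the image of a polynomial expressing a relation over $A$. More concretely, by the local structure of étale algebras, $S$ is a localization of $A[T]/(f)$ with $f$ monic and $f'$ invertible. The image of $T$ in $\hat{A}$ is therefore a root of a nonzero monic $f\in A[T]$. Each element of $S$ is a fraction $g(T)/h(T)$ whose denominator becomes a unit in $\hat{A}$, and algebraic elements of $\hat{A}$ form a ring. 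The fraction field of $A$ is a domain, and $\hat A$ is a domain in the normal local case, since excellent normal local rings have normal, hence domain, completions. So the inverse of an algebraic unit is again algebraic, because its minimal relation can be reversed. This gives $z\in A^a$.

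For $A^a\subseteq A^h$, take $y\in\hat{A}$ with $P(y)=0$ for some nonzero $P\in A[T]$. Since $\hat{A}$ is a domain, I may replace $P$ by an irreducible factor over $\mathrm{Frac}(A)$ that $y$ kills. Let $L=\mathrm{Frac}(A)(y)\subseteq \mathrm{Frac}(\hat A)$, which is a finite extension. Let $A'$ be the integral closure of $A$ in $L$; by \Cref{Prop: excellent}(3) it is a finite $A$-module. Because $A$ is normal, $A'\cap \hat{A}$ is the relevant object. Concretely, $y$ times a suitable $0\neq c\in A$ is integral over $A$, so I may assume $y$ is integral, with $y$ killed by $c$-cleared denominators handled at the end by dividing in $A^h$. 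We then obtain a ring map $A'':=A[y]\to\hat{A}$, finite over $A$, and hence a prime $\fq=A''\cap\hat{\fm}$ lying over $\fm$. The key step is to show that $A\to A''_{\fq}$ induces an isomorphism on henselizations, so that $y\in A''_\fq\subseteq (A''_{\fq})^h=A^h$.

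To do this I would combine Zariski's Main Theorem with normality and analytic unramifiedness. The map $A\to \hat{A}$ factors through $A''_\fq$, and the completion $\widehat{A''_\fq}$ maps to $\hat{A}$. Since $A$ is normal and excellent, $\hat{A}$ is a normal domain, so $A\to A''_\fq$ is birational onto its image after normalization, and ZMT makes $A\to A''_\fq$ essentially a localization at a point over which $A$ is unramified. That in turn gives an isomorphism of henselizations via \Cref{Lem: henselization_of_etale_neighborhood}. An alternative I would try is Artin approximation: apply it to the single equation $P(T)=0$ over $A^h$ with solution $\hat t=y$ and large $c$. This yields $t\in A^h$ with $P(t)=0$ and $t\equiv y \bmod \fm^c$. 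Since $P$ has finitely many roots in the domain $\hat{A}$, choosing $c$ large enough that distinct roots are separated mod $\fm^c$ (Krull intersection) forces $t=y$. I expect this Artin-approximation route to be the cleanest, and the main obstacle to be justifying that $\hat{A}$ is a domain (normality of the completion of an excellent normal local ring), which is exactly where the normality hypothesis enters.
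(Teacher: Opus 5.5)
Your Artin-approximation route is essentially the paper's proof. The paper likewise notes that $\hat{A}$ is a domain because $A$ is excellent and normal, and gets $A^h\subseteq A^a$ from the fact that elements of étale neighborhoods are algebraic; your local-structure argument here is more careful than the paper's assertion that étale algebras are finite. It then applies Artin approximation together with the finiteness of the root set of a nonzero polynomial in the domain $\hat{A}$. The only difference is that the paper takes $f\in A^h[t]$ rather than $P\in A[T]$, which gives the slightly stronger fact that $A^h$ is algebraically closed in $\hat{A}$.

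Drop the Zariski Main Theorem sketch, because its key claim that $A\to A''_{\fq}$ induces $(A''_{\fq})^h\cong A^h$ is false in general. Take $A=\bF_q[x]_{(x)}$ with $q$ odd and $y=x\sqrt{1+x}\in\hat{A}$. Then $A''_{\fq}$ is the local ring of a node, and its henselization is not a domain, whereas $A^h$ is. So that route would at least require passing to the normalization first.
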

\begin{proof}
    First, according to the construction of $A$, we know it is excellent and normal, hence $\hat{A}$ is a domain.  Notice that if $B$ is an \'etale $A$ algebra, then $B$ is finite $A$-module. Thus every element $b\in B$ is algebraic over $A$. Hence $A^h$ is  algebraic over $A$ by \Cref{Lem: henselization_local}(2).  
    
    To show $A^h$ is the algebraic closure of $A$ in $\hat{A}$, we still need to show that $A^h$ is algebraically closed inside $\hat{A}$. Assume $\hat{y}\in \hat{A}$ to be an element which satisfies the polynomial equation
    \[
    f(\hat{y})=0, \quad f(t)\in A^h[t].
    \] 
    By Artin's approximation theorem above, for every positive integer $c$, there exists an element $y^h_c\in A^h$ satisfying  
    \begin{equation}\label{Eqn: Artin_approx}
        f(y^h_c)=0, \quad y^h_c\equiv \hat{y}\ \bmod\ (\fm^h)^c.
    \end{equation}
    By the fundamental theorem of algebra we know that $f(t)$ has at most $\deg(f)$ many roots in $\hat{A}$ or its fraction field. Hence as $c$ increases (for example, when $c\gg 0$ so that $|(\fm^h)^c|$ is smaller than the distance of any two distinct roots of $f(t)$), \eqref{Eqn: Artin_approx} tells that $\hat{y}=y^h_c$ is inside $A^h$. Hence every polynomial over $A^h$ has a root inside $A^h$, i.e., $A^h$ is algebraically closed in $\hat{A}$.  
\end{proof}

Next, we want to remove the ``normal'' condition in \Cref{Prop: algebraic_closure}. For this, we consider a local pair $(A,\fm)$ such that $A$ is not necessarily normal. Let $C$ be the integral closure of $A$ in its quotient field. Then $C$ is a normal integral domain, and is a finite $A$-module by \Cref{Prop: excellent}(3). Write $\fc:=\fm C $. Let $\hat{C}$ (resp. $C^h$) be the completion (resp. henselization) of $C$ at $\fc$. We show that the algebraic closure $C^a$ of $C$ in $\hat{C}$ coincides with $C^h$. Two facts on henselizations are needed. 

\begin{Lem}\label{Lem: henselization_radical_intersection}
Let $(R,I)$ be a pair, $(R^h, I^h)$ be the henselization of $(R, I)$. 
\begin{enumerate}
    \item If $(S,J)$ is the henselization of $(R, \sqrt{I})$, then $(R^h,I^h)=(S,IS)$, and $J=\sqrt{IS}$. 
    \item If $I=I_1 \cap \cdots \cap I_n$, where $I_1,\dots, I_n$ are ideals of $R$ such that $I_i+I_j=R$ for all $1\le i<j\le n$, then the canonical map 
    $$(R,I)^h \to \prod_{i=1}^n (R,I_i)^h$$
    is an isomorphism. 
\end{enumerate}
\end{Lem}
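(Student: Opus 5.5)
For both parts I will work with the description of henselizations as henselian pairs satisfying a universal property (\Cref{Defn: henselization_pair}), together with criterion (1) of \Cref{Prop: Henselian_pair}. The key fact I expect to use repeatedly is that whether a pair $(S,J)$ is henselian depends on $J$ only through $\sqrt{J}$.

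For part (1), I first check this fact. The condition ``$J$ lies in the Jacobson radical'' holds for $J$ if and only if it holds for $\sqrt J$. For the idempotent-lifting criterion (3) of \Cref{Prop: Henselian_pair}, note that for a finite $S$-algebra $B$ the kernel of $B/JB\to B/\sqrt J B$ is a nil ideal. Idempotents lift uniquely along nil ideals, so $B\to B/JB$ is bijective on idempotents if and only if $B\to B/\sqrt J B$ is. Hence $(S,J)$ is henselian if and only if $(S,\sqrt J)$ is.

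Now let $(S,J)$ be the henselization of $(R,\sqrt I)$.
\begin{itemize}
\item I will show that $(S,IS)$ is henselian. By the colimit description (\Cref{Lem: henselization_local}), $J=\sqrt I\,S$, and $\sqrt{IS}=\sqrt{\sqrt I\, S}$ because $(\sqrt I)^n\subset I$ when $\sqrt I$ is finitely generated. In general I would instead use that every element of $\sqrt I$ has a power in $I$. So $(S,IS)$ has the same radical ideal as $(S,J)$ and is henselian by the fact above. This also gives $J=\sqrt{IS}$, once we know $J$ is radical. That holds because $S/J\cong R/\sqrt I$ is reduced: henselization does not change the quotient, by \Cref{Lem: completion_henselization_flat_I}(1) or directly from the étale-neighborhood colimit.
\item For the universal property, take a morphism $(R,I)\to(T,K)$ with $(T,K)$ henselian. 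Then $(T,\sqrt K)$ is henselian and $\sqrt I\mapsto\sqrt K$. So the map factors uniquely through $(S,J)\to(T,\sqrt K)$, and this ring map sends $IS$ into $K$.
\item Uniqueness among maps of pairs $(S,IS)\to(T,K)$ holds because any such map is also a map $(S,J)\to(T,\sqrt K)$.
\end{itemize}
Therefore $(S,IS)$ satisfies the universal property of $(R,I)^h$.

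For part (2), set $P=\prod_i (R,I_i)^h$ with ideal $\prod_i I_i^h$. I will first show that $P$ is a henselian pair. This follows from criterion (3): a finite algebra over a finite product splits as a product, and so does its set of idempotents. There is a morphism $(R,I)\to P$, since $I\subset I_i$ for each $i$, and by the universal property this induces $\alpha:(R,I)^h\to P$.

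For the inverse map I use the Chinese remainder theorem. Put $H=(R,I)^h$. Then $H/IH\cong R/I\cong\prod_i R/I_i$. The idempotents of $R/I$ lift uniquely to orthogonal idempotents $\epsilon_i$ of $H$, because $IH$ lies in the Jacobson radical of $H$ and $(H,IH)$ is henselian; concretely, I apply criterion (3) with $B=H$. This gives $H\cong\prod_i H\epsilon_i$. Each factor $(H\epsilon_i, I H\epsilon_i)$ is henselian, being a quotient of a henselian pair by a finite algebra.

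The ideal $IH\epsilon_i$ has the same radical as $I_iH\epsilon_i$, since $H\epsilon_i/IH\epsilon_i\cong R/I_i$. By the fact from part (1), $(H\epsilon_i,I_iH\epsilon_i)$ is henselian, so the universal property gives maps $(R,I_i)^h\to H\epsilon_i$. Taking their product yields $\beta:P\to H$. Both composites $\alpha\circ\beta$ and $\beta\circ\alpha$ restrict to the identity on $R$. By the uniqueness in the universal properties, applied factor by factor for $P$, they are therefore identities.

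The main obstacle is the bookkeeping in part (2). One must check that $\alpha$ maps $\epsilon_i$ to the $i$-th unit vector; this follows from uniqueness of idempotent lifts. One must also check that the idempotent-splitting really identifies each factor with a henselian pair for $I_i$.
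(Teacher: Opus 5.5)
Your argument is correct, but it does not follow the paper's route. The paper proves nothing here: it cites Greco (Proposition~8.6) for (1) and the Stacks Project (Lemma~0H7Q) for (2).

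You derive both parts directly from the universal property, using two ingredients:
\begin{itemize}
\item Whether $(S,J)$ is henselian depends only on $\sqrt{J}$, because idempotents lift uniquely along the nil ideal $\sqrt{J}B/JB$.
\item The Chinese remainder theorem together with idempotent lifting in $H=(R,I)^h$ splits $H$ into factors $H\epsilon_i$, each henselian along $I_i$.
\end{itemize}
Your approach gives a self-contained proof valid for arbitrary rings. It needs only the construction facts $I^h=IR^h$ and $R^h/IR^h\cong R/I$. These come from the \'etale-neighborhood colimit of Stacks~0A02, not literally from \Cref{Lem: henselization_local}. The paper's citations buy brevity.

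Two small remarks on part (2).
\begin{itemize}
\item You actually have $IH\epsilon_i=I_iH\epsilon_i$. The ideal $I_i$ maps to zero in $H\epsilon_i/IH\epsilon_i\cong R/I_i$, and $I\subset I_i$. So the radical fact from part (1) is not needed there.
\item The step $\alpha\circ\beta=\mathrm{id}_P$ does require $\alpha(\epsilon_i)=e_i$, the $i$-th unit idempotent of $P$. This holds because, modulo the ideals, $\alpha$ induces the CRT isomorphism $R/I\to\prod_j R/I_j$. Idempotent lifts in the henselian pair $P$ are unique. Hence $\alpha\circ\beta$ preserves each factor $e_iP=(R,I_i)^h$. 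On that factor it is an $R$-endomorphism of pairs, hence the identity by uniqueness.
\end{itemize}
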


\begin{proof}
(1) See \cite{Greco}*{Proposition 8.6}. (2) See \cite{stacks-project}*{\href{https://stacks.math.columbia.edu/tag/0H7Q}{Lemma 0H7Q}}. 
\end{proof}

\begin{Lem}
    Under the above setups, $C^h$ is the algebraic closure of $C$ in $\hat{C}$. 
\end{Lem}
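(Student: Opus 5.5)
We reduce to the local normal case of \Cref{Prop: algebraic_closure}, using that $C$ is semi-local and that henselization and completion both split over the maximal ideals. Since $C$ is a finite $A$-module (\Cref{Prop: excellent}(3)) and $A$ is local with maximal ideal $\fm$, the ring $C$ is semi-local. Its maximal ideals $\fn_1,\dots,\fn_s$ are exactly the primes lying over $\fm$, so $\sqrt{\fc}=\fn_1\cap\cdots\cap\fn_s$ with $\fn_i+\fn_j=C$ for $i\neq j$.

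\textbf{Step 1: decomposition.} By \Cref{Lem: henselization_radical_intersection}(1), the henselization of $(C,\fc)$ equals that of $(C,\sqrt{\fc})$ as rings. By \Cref{Lem: henselization_radical_intersection}(2) it then splits as
\[C^h\cong\prod_{i=1}^s (C,\fn_i)^h .\]
Each factor $(C,\fn_i)^h$ is the henselization of the local ring $C_{\fn_i}$, because every \'etale neighborhood of $(C,\fn_i)$ receives a map from one of $C_{\fn_i}$ and vice versa; this is the same cofinality argument as in \Cref{Lem: henselization_of_etale_neighborhood}. Likewise $\hat C\cong\prod_i\widehat{C_{\fn_i}}$ (the computation in the proof of \Cref{Lem: dim_height_depth_in_completion}(1)). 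Each $C_{\fn_i}$ is a normal local domain obtained by localizing a finitely generated algebra over a field, so \Cref{Prop: algebraic_closure} gives that $C_{\fn_i}^h$ is the algebraic closure of $C_{\fn_i}$ in $\widehat{C_{\fn_i}}$, which is a domain.

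\textbf{Step 2: comparison of closures.} We need $C^a=C^h$ inside $\hat C$.

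For $C^h\subset C^a$: $C^h$ is a colimit of \'etale $C$-algebras. Any element of an \'etale algebra is integral over the image of some standard-\'etale presentation, and in any case it satisfies a nonzero polynomial over $C$. Hence its image in $\hat C$ is algebraic over $C$.

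For $C^a\subset C^h$: take $\hat y=(\hat y_i)_i\in\hat C$ with $P(\hat y)=0$ for some nonzero $P\in C[T]$. Then $P(\hat y_i)=0$ in $\widehat{C_{\fn_i}}$, and the image of $P$ in $C_{\fn_i}[T]$ is still nonzero because $C\subset C_{\fn_i}$ is injective ($C$ is a domain). So $\hat y_i\in C_{\fn_i}^h$ by Step 1, and therefore $\hat y\in\prod_i C^h_{\fn_i}=C^h$.

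\textbf{Main obstacle.} The delicate point is making sure all identifications are compatible inside $\hat C$. The product decomposition of $C^h$ must map onto the product decomposition of $\hat C$ via the canonical faithfully flat map of \Cref{Lem: completion_henselization_flat_I}(3). In addition, the henselization of the pair $(C,\fn_i)$ must be identified with the local henselization of $C_{\fn_i}$ compatibly with the completions. We would check both by the universal property: each factor $\widehat{C_{\fn_i}}$ is henselian, so the maps $C^h\to\widehat{C_{\fn_i}}$ are uniquely determined and coincide with the projections.

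A second subtlety is that \Cref{Prop: algebraic_closure} is stated for polynomials over $A^h$. We only need polynomials over $C_{\fn_i}$, which is the weaker case, so that direction suffices.
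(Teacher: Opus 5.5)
Your proposal is correct and follows essentially the paper's route. You split $\hat C$ and $C^h$ into products over the maximal ideals $\fn_i$ via \Cref{Lem: henselization_radical_intersection}, apply \Cref{Prop: algebraic_closure} to each normal local factor, and reassemble; your remarks on identifying $(C,\fn_i)^h$ with the henselization of $C_{\fn_i}$, on compatibility of the decompositions inside $\hat C$, and on polynomials staying nonzero under localization fill in points the paper leaves implicit. Your separate argument for $C^h\subset C^a$ is redundant, since it already follows factorwise from Step 1, and its phrase about ``standard-\'etale presentations'' is garbled. The clean reason is that an \'etale $C$-algebra $S$ is flat, hence torsion-free over the domain $C$, and $S$ tensored with the fraction field of $C$ is a finite algebra over that field.
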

\begin{proof}
Since $\fm$ is the maximal ideal of $A$ and $C$ is a finite $A$-algebra, the prime ideals of $C$ containing $mC$ are all maximal. Suppose they are denoted as $\fn_1, \dots, \fn_n$, then the radical $\sqrt{\fm C}$ of $\fm C$ is $\prod_{i=1}^n \fn_i$. For $1\le i\le n$, let $\hat{C}_i$ be the completion of $C$ at $\fn_i$ respectively. It is easy to see that 
$$
\hat{C}\cong \prod_{i=1}^n \hat{C}_i.
$$ 
It then follows from the definition of algebraic closure that $C^a$ is isomorphic to the direct product of the algebraic closures $C^a_i$ of $C$ in $\hat{C}_i$. Since $C$ is normal, $C^a_i=C^h_i$ by \Cref{Prop: algebraic_closure}, where $C^h_i$ is the henselization of $(C, \fn_i)$. On the other hand, by \Cref{Lem: henselization_radical_intersection}(2) we have 
$$C^h \cong \prod_{i=1}^k C_i^h.$$ 
Hence the lemma follows from the equality
$$C^a=\prod_{i=1}^k C^a_i=\prod_{i=1}^k C^h_i=C^h.$$  
\end{proof}

We will need the (faithful) flatness of completion and henselization. 

\begin{Lem}\label{Lem: completion_henselization_flat}
With $(A, \fm)$, $(C, \fc)$ and $(C_i, \fn_i)$ as the above, we have 
    \begin{enumerate}
        \item $C$ is faithfully flat over $A$;
        \item $A^h$ and $\hat{A}$ are faithfully flat over $A$.
        \item $C^h$ and $\hat{C}$ are faithfully flat over $C$.
    \end{enumerate}
\end{Lem}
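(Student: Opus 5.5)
Parts (2) and (3) reduce directly to \Cref{Lem: completion_henselization_flat_I}(4). Part (1) needs a separate argument, and that is where I expect the real difficulty.

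For (2), $A$ is a Noetherian local ring and $\fm$ is its maximal ideal, so $\fm$ is the Jacobson radical of $A$. Then \Cref{Lem: completion_henselization_flat_I}(4) applied to the pair $(A,\fm)$ immediately gives that $A^h$ and $\hat{A}$ are faithfully flat over $A$.

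For (3), I would first show that $\fc=\fm C$ lies in the Jacobson radical of $C$. Since $C$ is a finite $A$-module by \Cref{Prop: excellent}(3), $C$ is integral over $A$. Hence every maximal ideal $\fn$ of $C$ contracts to a maximal ideal of $A$, which must be $\fm$. So $\fm C\subset\fn$ for every maximal ideal $\fn$, i.e.\ $\fc\subset\bigcap_i\fn_i$, the Jacobson radical of $C$. Since $C$ is Noetherian (being finite over $A$), \Cref{Lem: completion_henselization_flat_I}(4) applied to $(C,\fc)$ yields that $C^h$ and $\hat{C}$ are faithfully flat over $C$.

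For (1), I would split faithful flatness into flatness plus surjectivity of $\Spec C\to\Spec A$ on maximal ideals.
\begin{enumerate}
\item The surjectivity half is routine. The map $A\to C$ is injective, because $A$ is a domain contained in its fraction field, and $C$ is integral over $A$, so lying-over holds. Moreover $\fm C\neq C$, since some $\fn_i$ lies over $\fm$.
\item For flatness, $A$ is local and $C$ is a finitely generated $A$-module. By the local criterion, flatness is equivalent to freeness, i.e.\ to $\mathrm{Tor}_1^A(A/\fm,C)=0$.
\end{enumerate}
This freeness is the main obstacle. Since $C$ lies in the fraction field of $A$, its rank is $1$, so a free $C$ would force $C=A$. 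Thus the argument only goes through when $A$ is already normal (or, more generally, when $C$ happens to be $A$-free).

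So at this step I would first try to check whether the later use of (1) in the paper only needs the weaker statements. These are: $A\subset C$ injective, $\fm C$ contained in the Jacobson radical of $C$, and injectivity of $\hat{A}\to\hat{C}$. The last follows because $\hat{C}\cong C\otimes_A\hat{A}$ for finite $C$, and $\hat{A}$ is flat over $A$ by (2), so tensoring $A\hookrightarrow C$ with $\hat{A}$ preserves injectivity. If those suffice, I would prove (1) in that form. Otherwise I would add normality of $A$ as a hypothesis, in which case $C=A$ and (1) is trivial.
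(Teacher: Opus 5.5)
Your treatment of (2) and (3) is the same as the paper's. Both are read off from \Cref{Lem: completion_henselization_flat_I}(4), and your check that $\fc=\fm C$ lies in the Jacobson radical of $C$ is exactly what the paper leaves implicit.

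On (1) you are right and the paper is wrong. Its proof says $C$ is a finite $A$-module ``hence is flat over $A$'', and that inference is false; only the lying-over half of its argument is sound. A finite flat module over the Noetherian local ring $A$ is free, and $C\otimes_A\operatorname{Frac}(A)=\operatorname{Frac}(A)$ forces rank one, hence $C=A$. A concrete example is $A=\bF_q[t^2,t^3]_{(t^2,t^3)}$. It occurs in the paper's own setting, since $\fa=(t^2,t^3)\subset\bF_q[t^2,t^3]$ has $\depth\fa=\dim\bF_q[t^2,t^3]=1$. Here $C=\bF_q[t]_{(t)}$ and $C/\fm C\cong\bF_q[t]/(t^2)$ has dimension $2$ over $A/\fm$, so $C$ is not flat. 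Statement (1) therefore cannot be proved for non-normal $A$.

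Where your proposal still has a gap is the salvage. The later arguments need more than injectivity of $A\to C$ and $\hat A\to\hat C$. The proofs of \Cref{Lem: iota_inj} and \Cref{Lem: ff_descent} use base changes of (1), namely faithful flatness of $C^h=C\otimes_A A^h$ over $A^h$ and of $\hat C=C\otimes_A\hat A$ over $\hat A$, to run faithfully flat descent. Both fail whenever $A$ is not normal, because flatness descends along the faithfully flat maps $A\to A^h$ and $A\to\hat A$. So your weakened list does not support the paper's argument as written. Your fallback of assuming $A$ normal makes $C=A$, which empties \Cref{Prop: algebraic_closure_revisited}, and that non-normal case is the whole point of this lemma.

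The correct repair is to drop (1) and use instead the faithful flatness of $A^h\to\hat A$ from \Cref{Lem: completion_henselization_flat_I}(3).
\begin{enumerate}
\item By \Cref{Lem: henselization_base_change} and \Cref{Lem: completion_base_change}, $C^h\otimes_{A^h}\hat A\cong C\otimes_A\hat A\cong\hat C$.
\item Tensoring $0\to A^h\to C^h\to C^h/A^h\to 0$ with $\hat A$ therefore identifies $(C^h/A^h)\otimes_{A^h}\hat A$ with $\hat C/\hat A$.
\item The map $M\to M\otimes_{A^h}\hat A$ is injective for every $A^h$-module $M$. Taking $M=C^h/A^h$ gives $C^h\cap\hat A=A^h$ inside $\hat C$.
\end{enumerate}
This is exactly what the diagram chase in \Cref{Prop: algebraic_closure_revisited} needs. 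Likewise, $\iota$ is injective simply because it is $M\to M\otimes_{A^h}\hat A$ with $M=C^h\otimes_{A^h}C^h$.
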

\begin{proof}
    By the construction of $C$, it is a finite module of $A$ by \Cref{Prop: excellent}(3), hence is flat over $A$. Moreover, due to the going-up theorem \cite{Atiyah-Macdonald}*{Theorem~5.11} the natural morphism $\Spec(C)\to \Spec(A)$ is surjective. Thus the faithfully flatness follows from the fact that a surjective flat morphism is faithfully flat \cite{MilEC}*{Proposition~2.7}. This proves (1). The rest statements are immediate consequences of \Cref{Lem: completion_henselization_flat_I}(4).
\end{proof}

In general, for an ideal $I$ of a ring $R$ (not necessarily Noetherian) and an $R$-module $M$, the natural composition 
\[
\hat{R}\otimes_R M\to \hat{R}\otimes_R \hat{M}\to \hat{R}\otimes_{\hat{R}} \hat{M}=\hat{M}
\]
needs not to be either injective or surjective. But under certain finiteness conditions we have the following.

\begin{Lem}\cite{Atiyah-Macdonald}*{Proposition~10.13}\label{Lem: completion_base_change}
    If $R$ is Noetherian and $M$ is a finitely generated module of $R$, then the above map $\hat{R}\otimes_R M\to \hat{M}$ is isomorphic. 
\end{Lem}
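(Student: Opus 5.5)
The plan is to reduce to finite free modules via a presentation and then compare the two functors $M\mapsto \hat{R}\otimes_R M$ and $M\mapsto \hat{M}$ on the category of finitely generated $R$-modules. Since $R$ is Noetherian and $M$ is finitely generated, I will choose a finite presentation
\[
R^m \xrightarrow{\ \alpha\ } R^n \xrightarrow{\ \beta\ } M \to 0 .
\]
The natural map $\hat{R}\otimes_R N\to \hat{N}$ is functorial in $N$, so it gives a commutative diagram whose top row is $\hat{R}\otimes_R R^m\to \hat{R}\otimes_R R^n\to \hat{R}\otimes_R M\to 0$ and whose bottom row is $\widehat{R^m}\to\widehat{R^n}\to \hat{M}\to 0$, with the natural maps as vertical arrows.

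First I will show that both rows are exact. The top row is exact because the tensor product is right exact. For the bottom row I need that $I$-adic completion is exact on finitely generated modules over a Noetherian ring. I will obtain this from the Artin--Rees lemma: if $0\to M'\to M''\to M'''\to 0$ is exact with finitely generated terms, then the filtration $\{M'\cap I^kM''\}$ on $M'$ is $I$-stable and hence defines the same topology as $\{I^kM'\}$. The inverse limit of the exact sequences
\[
0\to M'/(M'\cap I^kM'')\to M''/I^kM''\to M'''/I^kM'''\to 0
\]
stays exact because the left-hand system has surjective transition maps (the Mittag-Leffler condition). I can apply this to the short exact sequences $0\to\ker\beta\to R^n\to M\to 0$ and $0\to\ker\alpha\to R^m\to\ker\beta\to 0$. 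Both are made of finitely generated modules since $R$ is Noetherian, and splicing them gives exactness of the bottom row. I expect this step, exactness of completion via Artin--Rees, to be the main obstacle; everything else is formal.

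Next I will check the two leftmost vertical maps. Completion commutes with finite direct sums, and $\hat{R}\otimes_R R\to\hat{R}$ is the identity. Hence $\hat{R}\otimes_R R^k\to\widehat{R^k}=\hat{R}^k$ is an isomorphism for every $k$.

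Finally, I will conclude with a diagram chase, which is a form of the five lemma for right exact rows. Surjectivity of $\hat{R}\otimes_R M\to\hat{M}$ follows because $\widehat{R^n}\to\hat{M}$ is surjective and factors through the isomorphism $\hat{R}\otimes_R R^n\cong\widehat{R^n}$. For injectivity, I take an element of $\hat{R}\otimes_R M$ mapping to $0$ and lift it to $\hat{R}\otimes_R R^n$. Its image in $\widehat{R^n}$ lies in the image of $\widehat{R^m}$ by exactness of the bottom row. Pulling this back through the isomorphism $\hat{R}\otimes_R R^m\cong\widehat{R^m}$ shows that the lift comes from $\hat{R}\otimes_R R^m$, so the original element is $0$. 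Therefore $\hat{R}\otimes_R M\to\hat{M}$ is an isomorphism.
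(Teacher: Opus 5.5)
Your proof is correct and is essentially the standard argument of Atiyah--Macdonald, Proposition~10.13, which the paper simply cites without giving its own proof: exactness of completion on finitely generated modules over a Noetherian ring via Artin--Rees, followed by a five-lemma-style chase on a presentation of $M$. The only cosmetic difference is that Atiyah--Macdonald use a short exact sequence $0\to N\to F\to M\to 0$ together with surjectivity of $\hat{R}\otimes_R N\to\hat{N}$ for finitely generated $N$, whereas you present $\ker\beta$ by a second free module so that both left vertical maps are isomorphisms.
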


There is a similar result for henselization. 

\begin{Lem}\cite{stacks-project}*{\href{https://stacks.math.columbia.edu/tag/0DYE}{Lemma 0DYE}}\label{Lem: henselization_base_change}
    Let $(R, I)\to (S, J)$ be a map of pairs such that the prime ideals of $S$ containing $IS$ are in bijection with those containing $J$. Let $(R^h, I^h)\to (S^h, J^h)$ be the induced map of henselizations. If $S$ is integral over $R$, then the naturally induced map $R^h\otimes_R S\to S^h$ is an isomorphism. 
\end{Lem}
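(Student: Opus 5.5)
The plan is to show directly that $S':=R^h\otimes_R S$, together with the ideal $JS'$, satisfies the universal property of \Cref{Defn: henselization_pair} for the pair $(S,J)$. There are three steps.

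\textbf{Step 1: reduce to the ideal $IS$.} The hypothesis says that $V(IS)=V(J)$ in $\Spec S$, hence $\sqrt{IS}=\sqrt{J}$. By \Cref{Lem: henselization_radical_intersection}(1), the henselization of a pair depends only on the radical of the ideal, up to replacing the ideal by its extension. So it suffices to prove that
\[
(S,IS)^h\cong R^h\otimes_R S .
\]

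\textbf{Step 2: $(S', I^hS')$ is henselian.} Here $I^hS'=IS'$ by \Cref{Lem: completion_henselization_flat_I}(1).
\begin{itemize}
\item First take $S$ finite over $R$. Then $S'$ is finite over $R^h$, and every finite $S'$-algebra $T$ is finite over $R^h$. Since $(R^h,I^h)$ is henselian, criterion (3) of \Cref{Prop: Henselian_pair} gives that idempotents of $T$ and of $T/I^hT$ correspond bijectively, and $T/I^hT=T/IS'T$. Hence $(S',IS')$ satisfies criterion (3).
\item The Jacobson radical condition also holds: maximal ideals of $S'$ contract to maximal ideals of $R^h$ by integrality, and those contain $I^h$.
\item For general integral $S$, write $S$ as the filtered colimit of its finite $R$-subalgebras $S_\lambda$. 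Then $S'=\colimit (R^h\otimes_R S_\lambda)$. Henselian pairs are stable under filtered colimits: a monic polynomial and a coprime factorization modulo the ideal are defined at a finite stage, so criterion (1) of \Cref{Prop: Henselian_pair} passes to the colimit.
\end{itemize}

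\textbf{Step 3: the universal property.} Let $(S,IS)\to(T,K)$ be a map to a henselian pair.
\begin{itemize}
\item \emph{Existence.} The composite $(R,I)\to(T,K)$ factors uniquely through $R^h$. Tensoring with $S\to T$ gives a map $S'\to T$ compatible with $S$.
\item \emph{Uniqueness.} By \Cref{Lem: henselization_local}, $S'=\colimit_{(R',I')}(R'\otimes_R S)$, the colimit running over the étale neighborhoods $(R',I')$ of $(R,I)$. Each $(R'\otimes_R S, I'(R'\otimes_R S))$ is an étale neighborhood of $(S,IS)$, since
\[
(R'\otimes_R S)/I' \cong R'/I'\otimes_R S \cong S/IS .
\]
By criterion (2) of \Cref{Prop: Henselian_pair}, an $S$-algebra map from an étale neighborhood of $(S,IS)$ into the henselian pair $(T,K)$ is the unique lift of the induced map to $T/K$. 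Such maps are therefore unique, and uniqueness passes to the colimit.
\end{itemize}
Together, Steps 2 and 3 identify $(S',IS')$ with $(S,IS)^h$. The natural map $R^h\otimes_R S\to S^h$ is then this identification.

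The main obstacle I expect is Step 2, namely that henselianity ascends along integral maps. I would prove the finite case via the idempotent criterion exactly as above, rather than via factorization of polynomials, and then pass to integral extensions by the colimit argument. Some care is also needed to check that the colimit description in Step 3 is compatible with the tensor product, which holds because tensor products commute with filtered colimits.
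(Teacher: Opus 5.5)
Your plan is correct and is essentially the argument behind the Stacks lemma that the paper cites instead of proving. Henselianity ascends along the integral map $R^h\to R^h\otimes_R S$ (your Step 2). It depends on the ideal only up to radical (your Step 1). The universal properties then identify $R^h\otimes_R S$ with $S^h$. Stacks does this last step by building an inverse $S^h\to R^h\otimes_R S$ from the universal property of $S^h$; checking the universal property of $R^h\otimes_R S$ directly, as you do, amounts to the same thing. Your proof of Step 2 (the finite case via the idempotent criterion (3), then a filtered colimit for integral $S$) is fine, and it uses only the criteria listed in \Cref{Prop: Henselian_pair}.

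The one step not justified as written is uniqueness in Step 3. Criterion (2) of \Cref{Prop: Henselian_pair} only asserts that a lift \emph{exists}. So it does not show that an $S$-algebra map from an \'etale neighborhood of $(S,IS)$ into $(T,K)$ is determined by its reduction modulo $K$.

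That uniqueness is in fact true. For $S\to S_1$ \'etale, the kernel of multiplication $S_1\otimes_S S_1\to S_1$ is generated by an idempotent $e$. If $s_1,s_2\colon S_1\to T$ agree modulo $K$, then $(s_1\otimes s_2)(e)$ is an idempotent of $T$ lying in $K$, hence in the Jacobson radical of $T$. It is therefore $0$, which forces $s_1=s_2$.

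You do not need this at all, however. A ring map $R^h\otimes_R S\to T$ is determined by its restrictions to $S$ and to $R^h$. The first is prescribed. The second is a morphism of pairs $(R^h,I^h)\to(T,K)$ extending $R\to T$, because $I^h=IR^h$ and $I$ maps into $K$. It is therefore unique by the universal property of $(R,I)^h$. With this replacement you can drop the colimit description in Step 3, and the proof is complete.
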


Since $A$ is a subring of $C$, and $C$ is a finitely generated module of $A$, using \Cref{Lem: completion_henselization_flat} and \Cref{Lem: completion_base_change} one sees that the naturally induced map $\hat{A}\hookrightarrow \hat{C}$ is injective. Using the same arguments, with \Cref{Lem: completion_base_change} replaced by \Cref{Lem: henselization_base_change}, one also gets injection $A^h\hookrightarrow C^h$. Notice that $A^h\hookrightarrow \hat{A}$ is also injective. Consider the following commutative diagram of additive groups:
\begin{equation}\label{Eqn: 21}
    \begin{tikzcd}
    A^h \arrow[r, hookrightarrow ] \arrow[d, hookrightarrow]& C^h \arrow[r, "d^h"] \arrow[d, hookrightarrow]& C^h \otimes_{A^h} C^h \arrow[d, "\iota"]\\
    \hat{A} \arrow[r, hookrightarrow]& \hat{C}\arrow[r, "\hat{d}"] & \hat{C}\otimes_{\hat{A}} \hat{C}
\end{tikzcd}
\end{equation}
where every map is the naturally defined ring homomorphism, except for $d^h$  (resp. $\hat{h}$), which is the homomorphism of $A^h$-modules (resp. $\hat{A}$-modules) induced by $c\mapsto c\otimes 1-1\otimes c$; and for the ring homomorphism $\iota$, which is induced by $c_1\otimes_{A^h}c_2\mapsto c_1\otimes_{\hat{A}} c_2$.

\begin{Lem}\label{Lem: iota_inj}
    The map $\iota$ is injective. 
\end{Lem}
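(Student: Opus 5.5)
The plan is to identify $\iota$ with a base-change map and use faithful flatness. Let $M:=C^h\otimes_{A^h}C^h$, an $A^h$-module. First I show that $\iota$ is, up to the canonical isomorphism, the map $M\to \hat{A}\otimes_{A^h}M$, $m\mapsto 1\otimes m$. Once this is established, injectivity follows at once: by \Cref{Lem: completion_henselization_flat_I}(3), $\hat{A}$ is faithfully flat over $A^h$, and for any faithfully flat $A^h\to\hat{A}$ the map $M\to \hat{A}\otimes_{A^h}M$ is injective for every $A^h$-module $M$.

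The identification proceeds in two steps.

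\emph{Step 1.} Compare $\hat{C}$ with $\hat{A}\otimes_{A^h}C^h$. By \Cref{Lem: henselization_base_change} (applicable since $C$ is integral, indeed finite, over $A$, and since all primes of $C$ over $\fm C$ are maximal, the bijection hypothesis holds), we have $C^h\cong A^h\otimes_A C$. Hence
\[
\hat{A}\otimes_{A^h}C^h\cong \hat{A}\otimes_{A^h}A^h\otimes_A C\cong \hat{A}\otimes_A C\cong \hat{C},
\]
where the last isomorphism is \Cref{Lem: completion_base_change}, using that $C$ is a finite $A$-module and $A$ is Noetherian. I will check that this composite is the natural map induced by $\hat{A}\to\hat{C}$ and $C^h\to\hat{C}$, i.e.\ the maps appearing in \eqref{Eqn: 21}. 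This holds because every map involved is the canonical one coming from $C\to\hat{C}$.

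\emph{Step 2.} Use associativity of tensor products:
\[
\hat{C}\otimes_{\hat{A}}\hat{C}\cong (\hat{A}\otimes_{A^h}C^h)\otimes_{\hat{A}}(\hat{A}\otimes_{A^h}C^h)\cong \hat{A}\otimes_{A^h}(C^h\otimes_{A^h}C^h).
\]
Under this isomorphism, $c_1\otimes_{\hat{A}}c_2$ for $c_i\in C^h$ corresponds to $1\otimes(c_1\otimes_{A^h}c_2)$, which is exactly $\iota$ composed with the identification.

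I expect the main obstacle to be the bookkeeping in Step 1: verifying that the abstract isomorphisms are compatible with the concrete inclusions used in \eqref{Eqn: 21}, and confirming the hypothesis of \Cref{Lem: henselization_base_change}. For the hypothesis, both sets of primes (those containing $\fm C$ and those containing $\fc=\fm C$) coincide trivially, since $\fc$ is defined as $\fm C$. For the compatibility, I would check it on elementary tensors $a\otimes c$ with $a\in\hat{A}$ and $c\in C$; both routes send such a tensor to $ac\in\hat{C}$. Since these tensors generate, this suffices.
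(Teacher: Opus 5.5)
Your argument is correct. Its first half coincides with the paper's: the paper also writes $\hat{C}=C^h\otimes_{A^h}\hat{A}$ and identifies $\iota$ with $\id_M\otimes(A^h\hookrightarrow\hat{A})$ for $M=C^h\otimes_{A^h}C^h$. Your Steps 1--2 simply make this explicit, including the input $C^h\cong A^h\otimes_A C$ from \Cref{Lem: henselization_base_change}.

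The two routes diverge at the last step. The paper tensors the injection $A^h\hookrightarrow\hat{A}$ with $M$, so it needs $M$ to be flat over $A^h$. It obtains this by base change from the assertion in \Cref{Lem: completion_henselization_flat}(1) that $C$ is faithfully flat over $A$. You instead use that $A^h\to\hat{A}$ is faithfully flat (\Cref{Lem: completion_henselization_flat_I}(3)), hence universally injective. So $M\to\hat{A}\otimes_{A^h}M$ is injective for every $A^h$-module $M$, and no property of $M$ is needed.

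This difference matters. The paper's flatness input fails precisely in the non-normal case, which is the case this lemma exists for. A finite flat module over a Noetherian local ring is free, and $C\subset\mathrm{Frac}(A)$ free of rank one over $A$ forces $C=A$. For instance, with $A=k[t^2,t^3]_{(t^2,t^3)}$ and $C=k[t]_{(t)}$, the element $t\otimes1-1\otimes t\in C\otimes_A C$ is nonzero, already modulo $\fm$. It is nevertheless killed by $t^2$, so $M$ is not flat over $A^h$.

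Your route depends only on a correct fact and therefore actually establishes the lemma. What it does not give is flatness of $C^h$ over $A^h$, which the paper reuses in \Cref{Lem: ff_descent}; that step needs its own justification.
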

\begin{proof}
    According to \Cref{Lem: completion_base_change} and \Cref{Prop: excellent}(3) we can write $\hat{C}=C^h\otimes_{A^h} \hat{A}$. Under this identity the map 
    \[
        \iota:  C^h \otimes_{A^h} C^h\otimes_{A^h} A^h\cong C^h \otimes_{A^h} C^h \to  (C^h\otimes_{A^h} \hat{A})\otimes_{\hat{A}} (C^h\otimes_{A^h} \hat{A})\cong (C^h\otimes_{A^h} C^h)\otimes_{A^h} \hat{A}
    \]
    is induced by sending elements $(c_1\otimes_{A^h}c_2)\otimes_{A^h} a$ to $c_1\otimes_{A^h}c_2\otimes_{A^h} a$, i.e., $\iota={\rm id}_{C^h\otimes_{A^h} C^h}\otimes {\rm id_{A^h}}$. 
    
    Since $A^h \hookrightarrow \hat{A}$ is injective, the injectivity of $\iota$ is a consequence of the faithfully flatness of $C^h\otimes_{A^h} C^h$ as an $A^h$-module. In the following commutative diagram of rings
    \[
    \begin{tikzcd}
        C^h\otimes_{A^h} C^h & C^h=C\otimes_A A^h \arrow[l] & \arrow[l]C\\
        C^h \arrow[u, "g"]& A^h \arrow[u, "f^h"] \arrow[l, "f^h"']& A \arrow[u, "f"] \arrow[l]
    \end{tikzcd}
    \]
    where $C^h=C\otimes_A A^h$ for \Cref{Lem: henselization_base_change}. The natural inclusion map $f$ is faithfully flat by \Cref{Lem: completion_henselization_flat}(1), hence so are the induced maps $f^h$ and $g$ due to the fact that faithfully flatness is stable under base change \cite{Matsumura-commutative-alg}*{(4.B)}. Hence  the composition $g\circ f^h$ is faithfully flat and our desired property is true. 
\end{proof}

\begin{Lem}\label{Lem: ff_descent}
    In \eqref{Eqn: 21} the kernel of $d^h$ (resp. $\hat{d}$) is $A^h$ (resp. $\hat{A}$). 
\end{Lem}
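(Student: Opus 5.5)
This is faithfully flat descent (the Amitsur complex) applied to the extensions $A^h\to C^h$ and $\hat{A}\to\hat{C}$. The plan is to prove it in the abstract: if $R\to S$ is a faithfully flat ring map, then the kernel of $d\colon S\to S\otimes_R S$, $s\mapsto s\otimes 1-1\otimes s$, is exactly the image of $R$.

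First we check the hypotheses in our two cases.

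For $A^h\to C^h$, we have $C^h\cong C\otimes_A A^h$ by \Cref{Lem: henselization_base_change}. This map is faithfully flat, being the base change of the faithfully flat map $A\to C$ from \Cref{Lem: completion_henselization_flat}(1). This was already observed in the proof of \Cref{Lem: iota_inj}.

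For $\hat{A}\to\hat{C}$, \Cref{Lem: completion_base_change} gives $\hat{C}\cong C\otimes_A\hat{A}$, since $C$ is a finite $A$-module by \Cref{Prop: excellent}(3). So $\hat{A}\to\hat{C}$ is again a base change of $A\to C$, hence faithfully flat.

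Both maps are also injective, so we may regard $A^h\subset C^h$ and $\hat{A}\subset\hat{C}$.

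Next we prove the descent statement. Clearly $R\subset\ker d$, because $r\otimes1=1\otimes r$ in $S\otimes_R S$. For the reverse inclusion, consider the complex of $R$-modules
\[
0\to R\to S\xrightarrow{d} S\otimes_R S.
\]
By faithful flatness, it is exact if and only if it becomes exact after applying $S\otimes_R-$. After this base change, the complex becomes
\[
0\to S\to S\otimes_R S\to S\otimes_R S\otimes_R S,
\]
with the map $s'\otimes s\mapsto s'\otimes s\otimes1-s'\otimes1\otimes s$. This base-changed complex has a section $S\otimes_R S\to S$, $s'\otimes s\mapsto s's$, and a contracting homotopy $s'\otimes s\otimes t\mapsto s'\otimes st$. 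A direct computation with this homotopy shows that any element killed by the second map lies in the image of $S$. Hence the base-changed complex is exact, and so is the original one. Therefore $\ker d=R$.

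Applying this to $R=A^h$, $S=C^h$ and to $R=\hat{A}$, $S=\hat{C}$ gives the two claims.

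I expect the main obstacle to be bookkeeping rather than anything conceptual. One needs to verify that the identifications $C^h\cong C\otimes_A A^h$ and $\hat{C}\cong C\otimes_A\hat{A}$ are compatible with the maps $d^h$ and $\hat{d}$ in \eqref{Eqn: 21}, and to check the homotopy identity carefully. Both are routine.
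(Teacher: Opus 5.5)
You follow the same route as the paper: show that $A^h\to C^h$ and $\hat A\to\hat C$ are faithfully flat, then use exactness of $0\to R\to S\to S\otimes_R S$ for faithfully flat $R\to S$. The paper cites Milne for the last step; your homotopy argument for it is fine.

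The gap is the input, and the paper has the same gap. You take faithful flatness of $A\to C$ from \Cref{Lem: completion_henselization_flat}(1), but that claim is false, because a finite module need not be flat. Over the local ring $A$, a finite flat module is free. Let $K$ be the fraction field of $A$. Since $C\otimes_A K=K$, $C$ would be free of rank one, say $C=Ac_0$. Then $c_0^2\in Ac_0$ forces $c_0\in A$, so $C=A$. Thus $A\to C$ is flat exactly when $A$ is already normal, i.e.\ exactly when the lemma says nothing. You also cannot get flatness of $A^h\to C^h=C\otimes_A A^h$ (or of $\hat A\to\hat C$) some other way. It would descend along the faithfully flat map $A\to A^h$ (resp.\ $A\to\hat A$) to flatness of $A\to C$.

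In fact the statement itself is false, so this step cannot be repaired. Let $A=\bF_q[t^3,t^5]_{(t^3,t^5)}$, whose normalization is $C=\bF_q[t]_{(t)}$. Using $t^6\in A$, then $t^5\in A$, then $t^6\in A$, we get
\[
t^7\otimes 1=t\otimes t^6=t^6\otimes t=1\otimes t^7\quad\text{in } C\otimes_A C,
\]
while $t^7\notin A$ because $7\notin 3\bN+5\bN$. The same identities hold in $C^h\otimes_{A^h}C^h$ and in $\hat C\otimes_{\hat A}\hat C=\bF_q[[t]]\otimes_{\bF_q[[t^3,t^5]]}\bF_q[[t]]$. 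Hence $t^7\in\ker\hat d\setminus\hat A$. Likewise $t^7\in\ker d^h\setminus A^h$: its nonzero class in $C/A$ survives in $(C/A)\otimes_A A^h=C^h/A^h$ by faithful flatness of $A\to A^h$.

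What \Cref{Prop: algebraic_closure_revisited} actually needs is only $C^h\cap\hat A=A^h$ inside $\hat C$, and that is true. We have $\hat C/\hat A\cong (C^h/A^h)\otimes_{A^h}\hat A$. For every $A^h$-module $N$ the map $N\to N\otimes_{A^h}\hat A$ is injective, because $A^h\to\hat A$ is faithfully flat (\Cref{Lem: completion_henselization_flat_I}(3)). No flatness of $A\to C$ is needed for this.
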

\begin{proof}
    By the same arguments in the proof of \Cref{Lem: iota_inj}, one deduces that $C^h$ (resp. $\hat{C}$) is faithfully flat over $A^h$ (resp. $\hat{A}$). Then the desired result follows from the classical descent theory (see \cite{MilEC}*{Proposition~2.18} for instance). 
\end{proof}

\begin{Prop}\label{Prop: algebraic_closure_revisited}
\Cref{Prop: algebraic_closure} is still true without assuming $A$ is normal.  
\end{Prop}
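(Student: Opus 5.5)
Both inclusions must be checked for the non-normal local domain $A$ with maximal ideal $\fm$. The inclusion $A^h\subset A^a$ is the same as in \Cref{Prop: algebraic_closure}. By \Cref{Lem: henselization_local}(2), every element of $A^h$ comes from an étale neighborhood $B$ of $(A,\fm)$. An element $b\in B$ is algebraic over $A$: $B$ is of finite type and quasi-finite over $A$, so $B$ is generically finite over $A$, and $b$ satisfies a nonzero polynomial over $A$ after clearing denominators. Since $A$ is a domain, its image in $\hat A$ is still annihilated by a nonzero polynomial over $A$. The real content is the reverse inclusion $A^a\subset A^h$, and the plan is to deduce it from the normal case via descent along $A\to C$.

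Let $\hat y\in\hat A$ be algebraic over $A$. Its image in $\hat C$ (via the injection $\hat A\hookrightarrow\hat C$) is algebraic over $A$, hence over $C$. By the preceding lemma, which identifies the algebraic closure of $C$ in $\hat C$ with $C^h$, there is some $z\in C^h$ whose image in $\hat C$ equals $\hat y$. Now use diagram \eqref{Eqn: 21}.

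\begin{itemize}
\item Since $\hat y$ lies in $\hat A$, \Cref{Lem: ff_descent} gives $\hat d(\hat y)=0$.
\item By commutativity of \eqref{Eqn: 21}, this means $\iota(d^h(z))=\hat d(\hat y)=0$.
\item \Cref{Lem: iota_inj} says $\iota$ is injective, so $d^h(z)=0$.
\item \Cref{Lem: ff_descent} applied to $d^h$ then gives $z\in A^h$.
\end{itemize}

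Because $A^h\to C^h\to\hat C$ and $A^h\to\hat A\to\hat C$ agree and $\hat A\to\hat C$ is injective, the image of $z$ in $\hat A$ is $\hat y$. Hence $\hat y\in A^h$, and $A^a=A^h$.

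The main obstacle is a compatibility point: the lemma for $C$ is about elements algebraic over $C$ in $\hat C$, where $\hat C$ is the completion of $C$ at $\fm C$. That it applies here needs the identification $\hat C\cong \hat A\otimes_A C$ from \Cref{Lem: completion_base_change}, so that the $\fm C$-adic completion is the one appearing in \eqref{Eqn: 21}. It also needs $\hat C\cong\prod\hat C_i$ with each $C_{\fn_i}$ normal excellent, so that \Cref{Prop: algebraic_closure} applies to each factor. One should also check that "algebraic over $C$" is correctly interpreted componentwise. Since $\hat C$ need not be a domain, a nonzero polynomial over $C$ stays nonzero in each factor because $C\hookrightarrow \hat C_i$, so this causes no trouble. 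Once these identifications are in place, the remaining steps are formal.
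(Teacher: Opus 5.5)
Your proof is correct and is essentially the paper's argument: the same diagram chase through \eqref{Eqn: 21}, using $C^a=C^h$ for the normalization $C$ together with \Cref{Lem: ff_descent} and \Cref{Lem: iota_inj}. The differences are minor: you start from an element algebraic over $A$ rather than from a root of a polynomial over $A^h$, and you spell out the inclusion $A^h\subset A^a$; your quasi-finiteness argument for that inclusion is sounder than the paper's remark in the proof of \Cref{Prop: algebraic_closure} that \'etale $A$-algebras are finite $A$-modules, which is false in general even though the conclusion drawn from it holds.
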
 
\begin{proof}
    To save notations, we will identify every element of a ring in \eqref{Eqn: 21} with its image via any (composition of) maps in this diagram. To prove this result, we need to show that for every nonzero polynomial $f(T)\in A^h[T]$, each root $a$ in $\hat{A}$ is also contained in $A^h$. But this follows from a diagram chase of \eqref{Eqn: 21} and \Cref{Lem: iota_inj} and \Cref{Lem: ff_descent}. To be more precise, we consider $f(T)$ as a polynomial in $C^h[T]$. Let $a$ be a $\hat{C}$-solution of $f(T)$, then by \Cref{Prop: algebraic_closure}, $a$ is automatically in $C^h$. Since $a\in \hat{A}$ by our assumption, we have $\hat{d}(a)=0$ according to \Cref{Lem: ff_descent}. Using the injectivity of $\iota$ (by \Cref{Lem: iota_inj}) and the commutativity of the diagram, it follows that $d^h(a)=0$ as well. But by \Cref{Lem: ff_descent} again this tells $a\in A^h$. This finishes the proof. 
\end{proof}

As promised we give the proof of \Cref{Prop: A^h=A^a}, notice that now we do not assume $A$ to be local. 

{\it Proof of \Cref{Prop: A^h=A^a}.} Let $\fa=\prod_{i=1}^s \fq_i$ be the primary decomposition of $\fa$. For any $1\le i\le s$, let $\fm_i$ be the radical of $\fq_i$. Then $\fm_1, \dots, \fm_s$ are all the maximal ideals of $A$ containing $\fa$, and $\sqrt{\fa}=\fm_1 \cdots \fm_s$. Let $A_i$ be the localization of $A$ at $\fm_i$, and $A_i^h=(A_i, \fm_iA_i)^h$. Then by \Cref{Lem: henselization_radical_intersection}, 
$$A^h=(A, \sqrt{\fa})^h \cong \prod_{i=1}^s (A, \fm_i)^h \cong \prod_{i=1}^s A_i^h.$$  
Meanwhile $\prod_{i=1}^s A^a_i \cong A^a$. Since each $A_i$ is local, \Cref{Prop: algebraic_closure_revisited} implies that $A^h_i=A^a_i$. Hence we conclude that $A^h=A^a$.  \hfill$\Box$

\subsection{Proof of Theorem \ref{Thm: free_mod_alg_version}} \label{Sect: proof_of_free_B^a_mod}
We recall the settings. Let $A$ be a finitely generated integral domain over $\bF_q$, $\fa$ an ideal of $A$ such that $\depth \fa=\dim A=d$. Let $(A^h,\fa^h)$ and $(\hat{A},\hat{\fa})$ be henselization and completion of $(A,\fa)$ respectively.  Choose any $\hat{A}$-regular sequence $\bx=\{x_1,\dots, x_d\}\subset \hat{\fa}\cap A^h=\fa^h$. Let $B=\bF_q[x_1,\dots,x_d]$ be the subring generated by $\bF_q$ and $\bx$. Let $\fb$ be the maximal ideal of $B$ generated by $x_1,\dots,x_d$. Let $(B^h, \fb^h)$ and $(\hat{B},\hat{\fb})$ be henselization and completion of $(B,\fb)$ respectively. Due to \Cref{Lem: free_mod} we know that $\hat{A}$ is a finitely generated module of $\hat{B}$. It is expected that the same property holds for the henselization, i.e., $A^h$ is a finitely generated $B^h$-module, according to the analog between completion and henselization. 

Fix $(A', \fa')$ to be an \'etale neighborhood of $(A,\fa)$ such that $x_1, \dots, x_d$ are contained in $\fa'$. The existence of such a neighborhood $(A', \fa')$ is guaranteed by the fact that $(A, \fa)^h$ is a filtered colimit of \'etale neighborhoods of $(A, \fa)$. By \Cref{Lem: henselization_of_etale_neighborhood}, $(A')^h = A^h$. So it is safe to use $A^h$ to denote both henselizations of $(A, \fa)$ and $(A', \fa')$. Let $\fm_1, \dots, \fm_s$ be all the maximal ideals of $A$ containing $\fa$. Then $\fm_i'=\fm_i A', 1\le i\le s$ are the maximal ideals of $A'$ containing $\fa'$. Let $A_i'$ be the localization of $A'$ at $\fm_i'$, and $(A_i')^h$ the henselization of $(A_i', \fm_i' A_i')$. Then $(A_i')^h=(A', \fm_i')^h$. Similar to the decomposition of $A^h$ as given in the proof of \Cref{Prop: A^h=A^a}, we have 
$$A^h=(A')^h=\prod_{i=1}^s (A_i')^h.$$ 
Now to show $A^h$ is finite as a $B^h$-module, it is enough to show $(A_i')^h$ is a finite $B^h$-module for all $1\le i\le s$. For this purpose we need a result about quasi-finite ring maps. 

\begin{Defn}\cite{stacks-project}*{\href{https://stacks.math.columbia.edu/tag/00PL}{Definition 00PL}}\label{Defn: quasi_finite}
Let $R \to S$ be a  homomorphism of rings. Let $\fq$ be a prime ideal of $S$ and let  $\fp=R\cap \fq$ be a prime ideal of $R$. Denote by $\kappa(\fq)$ (resp. $\kappa(\fp)$) the fraction field of $S/\fq$ (resp. $R/\fp$). We say the ring map $R \to S$ is {\it quasi-finite} at $\fq$ if $S_{\fq}/\fp S_{\fq}$ is of dimension zero and $\kappa(\fq)/\kappa(\fp)$ is a finite field extension.  
\end{Defn}

\begin{Lem}\cite{stacks-project}*{\href{https://stacks.math.columbia.edu/tag/05WP}{Lemma 05WP}}\label{Lem: henselization_and_quasi-finite}
With the notations in \Cref{Defn: quasi_finite}, the homomorphism $R \to S$ induces a commutative diagram 
\[\begin{tikzcd}
   R \arrow[d] \arrow[r] & R_{\fp} \arrow[d] \arrow[r] &  R_{\fp}^h \arrow[d] \\ 
   S           \arrow[r] & S_{\fq}           \arrow[r] &  S_{\fq}^h 
\end{tikzcd}
\]
where $R_{\fp}^h$ and $S_{\fq}^h$ denote henselizations (at their maximal ideals) of local rings $R_{\fp}$ and $S_{\fq}$ respectively. Assume $R \to S$ is quasi-finite at $\fq$. Then the ring map $R_{\fp}^h \to S_{\fq}^h$ is finite. 
\end{Lem}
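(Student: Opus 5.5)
The plan is to prove the statement by base change to the henselization $R_{\fp}^h$, followed by Zariski's Main Theorem and the decomposition of finite algebras over a henselian local ring. I assume, as in the Stacks formulation, that $R\to S$ is of finite type, which is implicit in the notion of quasi-finiteness used there.

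\emph{Base change.} Set $S'=S\otimes_R R_{\fp}^h$. The residue field of $R_{\fp}^h$ is $\kappa(\fp)$, so the fibre of $R_{\fp}^h\to S'$ over the closed point is $S\otimes_R\kappa(\fp)$. Hence there is a unique prime $\fq'$ of $S'$ lying over $\fq$ and over the maximal ideal of $R_{\fp}^h$, with $\kappa(\fq')=\kappa(\fq)$. Since quasi-finiteness at a point only depends on the fibre, $R_{\fp}^h\to S'$ is of finite type and quasi-finite at $\fq'$.

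\emph{Zariski's Main Theorem.} Apply the algebraic form of Zariski's Main Theorem to $R_{\fp}^h\to S'$ at $\fq'$. This gives a finite $R_{\fp}^h$-algebra $S''$ and an element $g\in S''$ with the following properties: $g\notin\fq''$, where $\fq''$ is the prime of $S''$ corresponding to $\fq'$, and $S''_g\cong S'_{g'}$ for some $g'\notin\fq'$. Hence $S'_{\fq'}\cong S''_{\fq''}$.

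\emph{Splitting the finite algebra.} Because $R_{\fp}^h$ is henselian, the finite algebra $S''$ splits as a finite product of local rings, using the idempotent-lifting characterization in \Cref{Prop: Henselian_pair}(3). One factor is $S''_{\fq''}$, which is therefore a direct factor of $S''$ and so finite over $R_{\fp}^h$. Consequently $S'_{\fq'}$ is a local ring finite over a henselian local ring, and it is itself henselian, again by \Cref{Prop: Henselian_pair}.

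\emph{Identifying $S_{\fq}^h$.} It remains to show $S'_{\fq'}=S_{\fq}^h$. The ring $R_{\fp}^h$ is a filtered colimit of étale neighbourhoods of $R_{\fp}$ (\Cref{Lem: henselization_local}). Therefore $S'_{\fq'}$ is a filtered colimit of local rings of étale $S$-algebras at primes over $\fq$ with trivial residue field extension, i.e.\ of étale neighbourhoods of $(S_{\fq},\fq S_{\fq})$. Combined with the henselian property just established, the universal property of the henselization (as in the proof of \Cref{Lem: henselization_of_etale_neighborhood}) forces $S'_{\fq'}\cong S_{\fq}^h$, compatibly with the maps from $R_{\fp}^h$. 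Hence $R_{\fp}^h\to S_{\fq}^h$ is finite.

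\emph{Main obstacle.} The delicate point is this last identification: a henselian ring that is an ind-étale neighbourhood colimit must be shown to be exactly the henselization. I would argue it via the universal property: there are maps both ways ($S_{\fq}^h\to S'_{\fq'}$ by henselianity, and $S'_{\fq'}\to S_{\fq}^h$ on each étale neighbourhood via \Cref{Prop: Henselian_pair}(2)), and both composites are identities by uniqueness. Invoking Zariski's Main Theorem is the only substantial external input.
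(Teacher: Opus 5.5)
Your proof is correct, and it is essentially the argument behind the cited Stacks Lemma 05WP, which the paper quotes without proof: base change to $R_{\fp}^h$, then use the henselian property together with Zariski's Main Theorem (which Stacks packages into its characterisation of henselian local rings) to show that the local ring of $S\otimes_R R_{\fp}^h$ at $\fq'$ is finite and henselian over $R_{\fp}^h$, and finally identify it with $S_{\fq}^h$ as a henselian filtered colimit of \'etale neighbourhoods of $(S_{\fq},\fq S_{\fq})$. You are also right to require that $R\to S$ be of finite type; the paper's \Cref{Defn: quasi_finite} omits this hypothesis, and without it the statement fails (e.g.\ $k[x]\to k[[x]]$ at $(x)$).
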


\begin{Lem} \label{Lem: $A^h$_fin_over_$B^h$}
$A^h$ is a finite $B^h$-module.      
\end{Lem}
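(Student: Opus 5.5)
By the reduction made just before \Cref{Defn: quasi_finite}, $A^h=\prod_{i=1}^s (A_i')^h$. It therefore suffices to show that each factor $(A_i')^h$ is a finite $B^h$-module. I plan to get this from \Cref{Lem: henselization_and_quasi-finite}, applied to the ring map $R:=B\to S:=A'$, with $\fq:=\fm_i'$ and $\fp:=\fb$.

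First, the map $B\to A'$ is well defined. Since $x_1,\dots,x_d\in\fa'\subset A'$, we have $B=\bF_q[x_1,\dots,x_d]\subset A'$, and $B$ is a polynomial ring by \Cref{Lem: B_poly_ring}. Next, $\fm_i'\cap B=\fb$. Indeed, the contraction contains $(x_1,\dots,x_d)=\fb$ because $x_j\in\fa'\subset\fm_i'$. It is also proper, and $\fb$ is maximal, so the two are equal.

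Second, I will verify that $B\to A'$ is quasi-finite at $\fm_i'$. The residue field $\kappa(\fm_i')=A'/\fm_i'\cong A/\fm_i$ is finite, since $A'/\fa'\cong A/\fa$ and $A/\fa$ is finite by \Cref{Lem: height_quot_decomp}. Hence $\kappa(\fm_i')$ is a finite extension of $\kappa(\fb)=\bF_q$. For the fibre condition I need $\dim A'_{\fm_i'}/\fb A'_{\fm_i'}=0$. Its completion is $\hat{A'}_{\fm_i'}/(\bx)$. The $\fa'$-adic completion of $A'$ equals $\hat{A}$ (the residue rings agree, via \Cref{Lem: completion_henselization_flat_I}(1)-(2) applied to étale neighborhoods). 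So the completion is a factor of $\hat{A}/(\bx)$, which is finite by \Cref{Lem: max_reg_seq_eqv_fin_quot}. Because completion is faithfully flat for local Noetherian rings, $\dim A'_{\fm_i'}/\fb A'_{\fm_i'}=\dim$ of its completion, which is $0$. Equivalently, $\fm_i'A'_{\fm_i'}$ is the unique prime over $\fb$.

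Third, \Cref{Lem: henselization_and_quasi-finite} now gives that $B^h_{\fb}\to (A'_{\fm_i'})^h=(A_i')^h$ is finite. It remains to identify $B^h_\fb$ with $B^h=(B,\fb)^h$. Since $\fb$ is maximal, the henselization of the pair $(B,\fb)$ agrees with the henselization of the local ring $B_\fb$. This follows because $B\to B_\fb$ induces an isomorphism $B/\fb\to B_\fb/\fb B_\fb$, and the colimit description in \Cref{Lem: henselization_local} can be applied to both. Hence each $(A_i')^h$ is a finite $B^h$-module. A finite product of finite modules is finite, so $A^h$ is a finite $B^h$-module. We also need the $B^h$-module structures to be compatible; this follows from the universal property, since all maps are induced by $B\to A'$.

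The main obstacle is the fibre-dimension claim in the second step, namely passing from finiteness of $\hat A/(\bx)$ to zero-dimensionality of $A'_{\fm_i'}/\fb A'_{\fm_i'}$. This uses that the completion of $A'_{\fm_i'}$ sits inside $\hat A$ as a factor. If that is awkward, I would instead argue directly: $\bx$ is a system of parameters in the completion, and dimension is preserved under completion.
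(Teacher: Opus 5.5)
Your proposal is correct and follows the paper's own route: reduce to the local factors $(A_i')^h$ of $A^h=(A')^h$, then apply the quasi-finiteness lemma (Stacks 05WP) to $B\to A'$ at $\fm_i'$. The one difference is the check of the zero-dimensional fibre. You argue locally, using that $\widehat{A'_{\fm_i'}}/(\bx)$ is a factor of the finite ring $\hat{A}/(\bx)$, whereas the paper claims globally, via étale base change, that $\dim A'/\fb A'=0$. Your local argument is the more careful of the two: the global claim can fail away from $\fa'$, and only the localization at $\fm_i'$ is needed.
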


\begin{proof}
As explained above \Cref{Defn: quasi_finite}, it is sufficient to show that, for each $i$, $(A_i')^h$ is finite over $B^h$. By \Cref{Lem: henselization_and_quasi-finite}, this is reduced to showing that the ring homomorphism $B \to A'$ is quasi-finite at $\fm_i$ for every $i$. 

By construction, we have $\dim A/\fa=0$. Since $A'/\fb A'\cong (A/\fb A)\otimes_{A} A'$. Note that $A'$ is \'etale over $A$, hence $(A/\fb A)\otimes_{A} A'$ is \'etale over $A/\fb A$ by base change. In particular $\dim A'/\fb A'=\dim A/\fb A=0$. On the other hand, $A'/\fm_i$ is clearly finite over $\bF_q=B/\fb$. Hence we conclude that the ring map $B\to A'$ is quasi-finite at $\fm_i$. It follows that $(A_i')^h$ is finite over $B^h$ and proves the lemma. 
\end{proof}

Now we are ready to give the proof of \Cref{Thm: free_mod_alg_version}. 

{\it Proof of \Cref{Thm: free_mod_alg_version}}. By \Cref{Prop: A^h=A^a}, $A^a=A^h$, hence it is equivalent to show that $A^h$ is a free $B^h$-module of rank $r=\dim_{\bF_q} \hat{A}/(\bx)$. Choose $e_1, \dots, e_r\in A^h\subset \hat{A}$ such that their reductions $\bar{e}_1, \dots, \bar{e}_r$ modulo $(\bx)$ form an $\bF_q$-basis of $\hat{A}/(\bx)$. Then $\{\bar{e}_1, \dots, \bar{e}_r\}$ is also an $\bF_q$-basis of $A^h/\fb A^h$. By \Cref{Lem: $A^h$_fin_over_$B^h$} we know that $A^h$ is a finite $B^h$-module, then Nakayama's Lemma tells that $\{e_1, \dots, e_r\}$ generates $A^h$ as a $B^h$-module. But \Cref{Lem: free_mod} tells us that $e_1, \dots, e_r$ are also a $\hat{B}$-basis of $\hat{A}$, hence are independent over $B^h$. We then derive that $e_1, \dots, e_r$ consist of a $B^h$-basis of $A^h$. Thus $A^h$ is a free $B^h$-module of rank $r$. This concludes the proof of \Cref{Thm: free_mod_alg_version}. \hfill$\Box$

\section{Applications and further discussion}\label{Sect: discussion}
In the final section of this paper, we discuss several topics related to our main theorem. We first present examples showing that, when a set of representatives \(E\) is not a vector space, the conclusion of Theorem 1.2 can either hold or fail. We then apply our theorem to the study of $\beta$-expansions. By interpreting the relevant notions within our framework, we obtain affirmative answers to several questions that have appeared in the literature.
\subsection{\texorpdfstring{Some examples}{Potential generalizations}}
Let $A$, $\fa$, $\bx$ and $E$ be as in \Cref{Thm: algebraic_eqv_automatic}. Assume $E$ to be an $\bF_q$-vector subspace of $\hat{A}$. For $f\in (\bx)$, take $E'\coloneqq E+f=\{r+f: r\in E\}$ to be another set of representatives of $\hat{A}/(\bx)$ which may not be an $\bF_q$-vector subspace. For every $y\in \hat{A}$, let 
$$
y=\sum_{u\in\bN^d} a_u'x^u,\quad  a_u'\in E'
$$ 
be the expansion of $y$ with respect to $\bx$ and $E'$. Note that $a_u'-f\in E$, then  
$$
y-f\sum_{u\in\bN^d}x^u=\sum_{u\in \bN^d} (a_u'-f)x^u
$$ 
is the expansion of $y-f\sum_{u\in \bN^d}x^u=y-f/\prod_{i=1}^d (1-x_i)$ with respect to $\bx$ and $E$. Now assume $f$ is algebraic over $A$. So $y$ is algebraic if and only if $y-f/\prod_{i=1}^d (1-x_i)$ is so, and then by \Cref{Thm: algebraic_eqv_automatic}, if and only if $\{a_u'-f\}_{u\in \bN^d}$ is $q$-automatic, or equivalently, if and only if $\{a_u'\}_{u\in\bN^d}$ is $q$-automatic. This example suggests that the conclusion of Theorem 1.2 may hold under assumptions weaker than requiring \(E\) to be a vector space. \\ 

Consider the power series ring $\bF_2[[x_1,\dots, x_d]]$, and power series $f_0, f_1\in \bF_2[[x_1,\dots, x_d]]$. Suppose the constant term of $f_i$ is $i\in\bF_2$ ($i=0,1$). Then $\{f_0, f_1\}$ is a set of representatives of $\bF_2[[x_1,\dots, x_d]]/(x_1,\dots, x_d)$. Every $y\in \bF_2[[x_1,\dots, x_d]]$ can be uniquely written as 
$$
y=\sum_{u\in \bN^d}a_ux^u, \quad a_u\in\{f_0, f_1\}.
$$ 
Denote by $a_u(0)\in \{0,1\}$ the constant term of $a_u$. Then $a_u=(1-a_u(0))f_0+a_u(0)f_1$. Therefore 
$$y=\sum_{u\in\bN^d}a_ux^u=\sum_{u\in \bN^d} ((1-a_u(0)) f_0+a_u(0) f_1)x^u=f_0\sum_{u\in\bN^d} x^u +(f_1-f_0)\sum_{u\in \bN^d} a_u(0)x^u.$$ 
Assume that $f_0, f_1$ are algebraic over $\bF_2[x_1, \dots, x_d]$. Then 
\begin{center}
    $y$ is algebraic over $\bF_2[x_1,\dots, x_d]$ $\Leftrightarrow$ $\{a_u(0)\}_{u\in \bN^d}$ is $2$-automatic $\Leftrightarrow$ $\{a_u\}_{u\in \bN^d}$ is $2$-automatic.
\end{center}
Notice that $\{f_0, f_1\}$ is not necessarily a subgroup of $\bF_2[[x_1, \dots, x_d]]$. 

In the second example, the only restriction on the elements of $E$ is that their are algebraic. These examples suggest one to consider whether \Cref{Thm: algebraic_eqv_automatic} is valid without assuming $E$ a group. However the following example shows that algebraicity of the elements of $E$ alone is insufficient in general: without further structure, the conclusion can fail. \\

Let $A=\bF_3[x,y]$, $\fa=(x,y)$, and $\bx=(x,y)$. Then
$\hat A=\bF_3[[x,y]]$ and $\depth(\fa,A)=\dim A=2$. Take
\[
E=\{e_0,e_1,e_2\}=\{0,\,1-x-y,\,2\}.
\]
The reductions of these elements modulo $(x,y)$ are $0,1,2$, so
$E$ is a complete set of representatives; all its elements even
belong to $A$. We show that the algebraic element $1\in A$ has a
non-$3$-automatic $E$-digit array.

For $m,n\geq 0$, put
\[
b_{mn}=\binom{m+n}{m}\bmod 2\in\{0,1\}\subset\bF_3,
\qquad
B(x,y)=\sum_{m,n\geq0}b_{mn}x^my^n,
\]
and set $b_{mn}=0$ if $m<0$ or $n<0$. The reduction modulo $2$
in the definition of $b_{mn}$ is taken among integers; the resulting
digits $0,1$ are then viewed as elements of $\bF_3$. Pascal's
identity implies that, away from $(0,0)$,
\[
b_{mn}\equiv b_{m-1,n}+b_{m,n-1}\pmod 2.
\]
Write
\[
C(x,y)=1+(x+y)B(x,y)=\sum_{m,n\geq0}c_{mn}x^my^n
\quad\text{in }\bF_3[[x,y]].
\]
Then $c_{00}=1$ and, for $m+n>0$,
$c_{mn}=b_{m-1,n}+b_{m,n-1}\in\{0,1,2\}$.
Thus $c_{mn}=1$ if and only if $b_{mn}=1$, including at the origin.
Since $e_a=a-(x+y)\mathbf{1}_{\{a=1\}}$ for $a\in\bF_3$, we obtain
\[
\sum_{m,n\geq0}e_{c_{mn}}x^my^n
=C(x,y)-(x+y)\sum_{c_{mn}=1}x^my^n
=C(x,y)-(x+y)B(x,y)=1.
\]
By uniqueness of the expansion with respect to $\bx$ and $E$,
the $E$-digits of $1$ are precisely $e_{c_{mn}}$.

For completeness, the binary support of $b_{mn}$ satisfies
\[
b_{mn}=1
\quad\Longleftrightarrow\quad
\text{$m$ and $n$ have no common $1$-position in binary}.
\]
Indeed, in $\bF_2[[X,Y]]$ the identity
$(1-X-Y)^{-1}=\prod_{j\geq0}(1+X^{2^j}+Y^{2^j})$
shows that each binary position may be assigned to $X$, to $Y$,
or to neither, but not to both. Symmetry therefore gives, for $n\geq1$,
\[
c_{nn}=2b_{n-1,n}
=\begin{cases}
2,&n=2^k\text{ for some }k\geq0,\\
0,&\text{otherwise}.
\end{cases}
\]
Suppose the two-dimensional array $c=(c_{mn})$ is $3$-automatic. Then its diagonal 
$d_n=c_{nn}$ would be $3$-automatic as well. Indeed, every sequence 
$(d_{3^e n+r})_{n\geq0}$ in the $3$-kernel of $d$ is the diagonal of 
the array $(c_{3^e m+r,\,3^e n+r})_{m,n\geq0}$ in the $3$-kernel of $c$. 
Thus the $3$-kernel of $d$ is finite. On the other hand, it is easy to see the 
$2$-kernel of $(c_{nn})$ is finite, so it is $2$-automatic. In addition, $c_{nn}$ 
is not ultimately periodic. This contradicts Cobham’s theorem, which says that a 
sequence automatic in both multiplicatively independent bases \(2\) and \(3\) 
must be ultimately periodic. Hence $c_{mn}$ is not $3$-automatic. Finally, 
\(a\mapsto e_a\) is a bijection of finite alphabets, so \((e_{c_{mn}})\) 
is not \(3\)-automatic either.

\subsection{\texorpdfstring{Application to $\beta$-expansions}{Applications}}
Let $\bF_q((1/z))$ be the field of one variable formal Laurent series of the form 
$$y=\sum_{n\ge -m}\frac{a_n}{z^n}, \quad a_n\in\bF_q$$ 
where $m\in\bZ$ is called the degree of $y$. We say a formal Laurent series is algebraic if it is algebraic over $\bF_q(1/z)$. The set of Laurent series of degree at most zero is $\bF_q[[1/z]]$. The integer part $[y]$ and the fraction part $\{y\}$ of $y$ are respectively defined as 
$$
[y]\coloneqq \sum_{-m\le n\le 0}\frac{a_n}{z^n}\quad  \text{ and }\quad \{y\}\coloneqq \sum_{n>0}\frac{a_n}{z^n}=y-[y].
$$ 
Let $\beta$ be a series in $\bF_q((1/z))$ of positive degree. Define 
$$
T(y)\coloneqq \beta y-[\beta y]=\{\beta y\}, \quad \forall y\in \bF_q[[1/z]].
$$ 
Denote by $T^n$ the $n$-th iteration of $T$ for positive integers $n$, and set $T^0(y)=y$. Then for all $y\in\bF_q[[1/z]]$, we have the following \emph{$\beta$-expansion} 
$$y=\frac{a_1}{\beta}+\frac{a_2}{\beta^2}+\cdots,$$ 
where $a_n=[\beta T^{n-1} y]$ are elements of $\bF_q[z]$.  For a $\beta$-expansion of $y$,  denote $d_{\beta}(y)$ the sequence $\{a_n\}_{n\ge 1}$. 
Many properties about $\beta$-expansions have been studied in literature. We concentrate on those related to $q$-automatic sequences. Hbaib and Mkaouar \cite{Hbaib-Mkaouar}*{Theorem 5.4} proved that if $d_{\beta}(1)$ is $q$-automatic, then $\beta$ is algebraic. They also proved the converse statement provided that $\deg \beta=1$. Scheicher and Sirvent \cite[Theorem 6.1]{Scheicher-Sirvent} showed that if $\beta$ is isolated of positive degree, then $d_{\beta}(1)$ is $q$-automatic. They \cite[Theorem 6.6]{Scheicher-Sirvent} also showed that if $\beta$ is a Pisot or Salem series, then $d_{\beta}(\alpha)$ is $q$-automatic if and only if $\alpha$ is algebraic. Both \cite{Hbaib-Mkaouar} and  \cite{Scheicher-Sirvent} left questions concerning automaticity of $\beta$-expansions. We collect those questions below. 

\begin{enumerate}
    \item \cite{Scheicher-Sirvent}*{Remark 6.7}: Is either the converse of \cite{Hbaib-Mkaouar}*{Theorem 5.4} or the converse of \cite[Theorem 6.1]{Scheicher-Sirvent} true. More precisely, whether one of the following statements is true: (i) $d_{\beta}(1)$ is $q$-automatic if and only if $\beta$ is algebraic; and (ii) $d_{\beta}(1)$ is $q$-automatic if and only if $\beta$ is isolated. 
    
    \item \cite{Hbaib-Mkaouar}: For any algebraic $\beta$ of positive degree, is it true that $d_{\beta}(x)$ is $q$-automatic if and only if $x$ is algebraic. 

    \item \cite[Remark 6.7]{Scheicher-Sirvent}: Is the converse of \cite[Theorem 6.6]{Scheicher-Sirvent} true. That is, for any $\beta$ of positive degree, if $d_{\beta}(x)$ is $q$-automatic if and only if $x$ is algebraic, then is $\beta$ a Pisot or Salem series.
\end{enumerate}
By applying \Cref{Thm: algebraic_eqv_automatic}, we prove the following theorem which refines results in both \cite{Hbaib-Mkaouar} and \cite{Scheicher-Sirvent}, and answers above questions.  

\begin{Thm}\label{Thm: beta expansions}
Let $\beta\in\bF_q((1/z))$ be a formal Laurent series of positive degree. Then the following statements are equivalent: 
\begin{enumerate}
    \item $\beta$ is algebraic;
    \item $d_{\beta}(1)$ is $q$-automatic; 
    \item for any $y\in \bF_q[[1/z]]$, $d_{\beta}(y)$ is $q$-automatic if and only if $y$ is algebraic.
\end{enumerate}
\end{Thm}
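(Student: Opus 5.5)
The plan is to realize $\beta$-expansions as the one-dimensional case $d=1$ of \Cref{Thm: exist_uniq_power_series} and \Cref{Thm: algebraic_eqv_automatic}. We then prove $(1)\Rightarrow(3)\Rightarrow(2)\Rightarrow(1)$.

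\emph{Setup.} Put $w=1/z$ and $m=\deg\beta\ge 1$, and set $x:=1/\beta$. Then $x\in w^m\bF_q[[w]]$ and $x=w^m\cdot(\text{unit})$. If $y$ has degree $<0$, then $T^{n}y$ has degree $<0$ for all $n$, so every digit $a_n=[\beta T^{n-1}y]$ is a polynomial in $z$ of degree $<m$. Moreover the remainders satisfy $T^ny=\beta^n\bigl(y-\sum_{k\le n}a_k\beta^{-k}\bigr)$, and these have bounded degree. Hence the series $y=\sum_{n\ge1}a_n\beta^{-n}$ converges $w$-adically.

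To move the digits into $\bF_q[[w]]$, I multiply by $z^{-(m-1)}\beta$. Set
\[
Y:=z^{-(m-1)}\beta\,y=\sum_{n\ge 1}\bigl(a_nz^{-(m-1)}\bigr)x^{\,n-1}.
\]
Each coefficient $a_nz^{-(m-1)}$ lies in $E:=\bF_q+\bF_qw+\cdots+\bF_qw^{m-1}$. This $E$ is an $\bF_q$-vector space of algebraic elements, and it maps bijectively onto $\bF_q[[w]]/(w^m)=\bF_q[[w]]/(x)$. The map $a\mapsto az^{-(m-1)}$ is a bijection of finite alphabets, and $y\mapsto Y$ multiplies by an element that is algebraic exactly when $\beta$ is. So (when $\beta$ is algebraic) the automaticity of $d_\beta(y)$ is equivalent to the automaticity of the $E$-digits of $Y$, and $y$ is algebraic iff $Y$ is.

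The case where $y$ has a nonzero constant term is handled by peeling off the first digit $a_1=[\beta y]$, which has degree $\le m$. Then $d_\beta(y)=(a_1,d_\beta(Ty))$, where $Ty$ has negative degree. A one-term shift preserves automaticity, and $y=(a_1+Ty)/\beta$ is algebraic iff $Ty$ is when $\beta$ is algebraic.

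\emph{$(1)\Rightarrow(3)$.} Assume $\beta$ is algebraic over $\bF_q(w)$. Take $A=\bF_q[w]$ and $\fa=(w)$. Then $\dim A=\depth\fa=1$ and $\hat A=\bF_q[[w]]$. Also $x=1/\beta$ is algebraic, it lies in $\hat\fa$, and it is nonzero, hence $\hat A$-regular. So $\{x\}$ is a regular sequence of length $d=1$, and $E$ consists of algebraic elements and is a vector space.

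By the uniqueness in \Cref{Thm: exist_uniq_power_series}, the expansion of $Y$ above is \emph{the} expansion with respect to $x$ and $E$. For this I must check that the greedy digits really give a convergent expansion with digits in $E$; the degree bound above does exactly this. \Cref{Thm: algebraic_eqv_automatic} then gives: the digits are $q$-automatic iff $Y$ is algebraic iff $y$ is algebraic.

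\emph{$(3)\Rightarrow(2)$.} This is immediate by taking $y=1$, which is algebraic.

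\emph{$(2)\Rightarrow(1)$.} This is \cite{Hbaib-Mkaouar}*{Theorem 5.4}, which I would cite. A self-contained attempt would go as follows. Write $1=\sum_n a_nX^n|_{X=1/\beta}$, where each coordinate sequence of the $a_n\in\bF_q[z]_{\le m}$ is automatic. By \Cref{Lem: auto_tuples} and \Cref{Lem: Salon's thm}, $f(X)=\sum a_nX^n$ is algebraic over $\bF_q(z,X)$. The difficulty is to ensure that the resulting relation $P(1/\beta,1)=0$ is nontrivial, so I would rely on the citation here.

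I expect the main obstacle to be the bookkeeping in the setup: matching the greedy $\beta$-digits with the unique expansion of \Cref{Thm: exist_uniq_power_series}, and the constant-term/degree edge case for $y$ of degree $0$, including the fact that $a_1$ may have degree $m$, outside $E$.
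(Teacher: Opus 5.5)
Your proposal is correct and follows essentially the same route as the paper. Both arguments reduce to $y\in\frac{1}{z}\bF_q[[1/z]]$ by peeling off the first digit, rescale to $z^{-(m-1)}\beta y$ so that the digits lie in $E=\bigoplus_{j<m}\bF_q z^{-j}$, apply \Cref{Thm: algebraic_eqv_automatic} with $A=\bF_q[1/z]$ and the regular element $1/\beta$, and cite Hbaib--Mkaouar for $(2)\Rightarrow(1)$. Your choices $x=1/\beta$ and $\fa=(1/z)\subset A$, together with your explicit convergence and degree checks, are the correct reading of the paper's slightly loose ``$\bx=\{\beta\}$'' and ``$\fa=\frac{1}{z}\bF_q[[1/z]]$''.
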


\begin{proof}
(2) $\Rightarrow$ (1) is a result of Hbaib and Mkaouar \cite{Hbaib-Mkaouar}. (3) $\Rightarrow$ (2) is obvious. Hence it remains to show (1) $\Rightarrow$ (3). 

Suppose $\beta$ is algebraic. It is easy to see $d_{\beta}(y)$ is $q$-automatic if and only if $d_{\beta}(\beta y-a_1)$ is $q$-automatic. Notice that $\beta y-a_1=\beta y-[\beta y]\in \frac{1}{z}\bF_q[[1/z]]$. So it is enough to show (1) $\Rightarrow$ (3) for all $y\in\frac{1}{z}\bF_q[[1/z]]$. Now by replacing $y$ with $T(y)$ if necessary,  we can assume $y\in\frac{1}{z}\bF_q[[1/z]]$. Suppose $y=\frac{a_1}{\beta}+\frac{a_2}{\beta^2}+\cdots$ to be the $\beta$-expansion of $y$. Write $d\coloneqq \deg \beta$, then 
$$\frac{\beta y}{z^{d-1}}=\sum_{i=1}^{\infty}\frac{a_i}{z^{d-1}} \frac{1}{\beta^{i-1}}.$$ 
Notice that for any element $y'\in \frac{1}{z}\bF_q[[1/z]]$ one has $\deg [\beta y']< \deg \beta$, also notice that the fraction part of any formal Laurent series is contained in $ \frac{1}{z}\bF_q[[1/z]]$. Thus one can check that $\deg a_i< \deg \beta$ inductively. So for every $i$, $\frac{a_i}{z^{d-1}}$ is contained in the $\bF_q$-subspace 
$\bigoplus_{j=0}^{d-1} \frac{1}{z^j}\bF_q.$

Take $A=\bF_q[1/z]$. Notice that $\bx=\{\beta\}$ is a regular sequence in the ideal $\fa=\frac{1}{z}\bF_q[[1/z]]$. Let $E=\bigoplus_{j=0}^{d-1} \frac{1}{z^j}\bF_q$.  Then 
$$\frac{\beta y}{z^{d-1}}=\sum_{i=1}^{\infty}\frac{a_i}{z^{d-1}} \frac{1}{\beta^{i-1}}$$
is the expansion of $\frac{\beta y}{z^{d-1}}$ with respect to $E$ and $\bx$. Hence $y$ is algebraic if and only if $\frac{\beta y}{z^{d-1}}$ is algebraic, and the latter holds if and only if $\{\frac{a_i}{z^{d-1}}\}_{i\ge 1}$ is $q$-automatic  according to \Cref{Thm: algebraic_eqv_automatic}. Finally, it is obvious that  $\{\frac{a_i}{z^{d-1}}\}_{i\ge 1}$ is $q$-automatic exactly when $d_{\beta}(y)$ is $q$-automatic. This proves (1) $\Rightarrow$ (3) and completes the proof of this theorem.
\end{proof}

\section*{Acknowledgements}
The first author is supported by the National Natural Science Foundation of China (Grant No. 12271382). We would like to thank Qixiao Ma for suggesting the method of henselization. We are also grateful to Shaoshi Chen and Yining Hu for inspiring discussions.

\bibliography{newbib}

@book {AS-auto-seqn,
    AUTHOR = {Allouche, Jean-Paul and Shallit, Jeffrey},
     TITLE = {Automatic sequences},
      NOTE = {Theory, applications, generalizations},
 PUBLISHER = {Cambridge University Press, Cambridge},
      YEAR = {2003},
     PAGES = {xvi+571},
      ISBN = {0-521-82332-3},
   MRCLASS = {11B85 (11Z05 37A45 37B10 68Q45 68R15 94A45)},
  MRNUMBER = {1997038},
MRREVIEWER = {Val\'{e}rie\ Berth\'{e}},
       DOI = {10.1017/CBO9780511546563},
       URL = {https://doi.org/10.1017/CBO9780511546563},
}

@book {Hartshorne,
    AUTHOR = {Hartshorne, Robin},
     TITLE = {Algebraic geometry},
    SERIES = {Graduate Texts in Mathematics, No. 52},
 PUBLISHER = {Springer-Verlag, New York-Heidelberg},
      YEAR = {1977},
     PAGES = {xvi+496},
      ISBN = {0-387-90244-9},
   MRCLASS = {14-01},
  MRNUMBER = {463157},
MRREVIEWER = {Robert Speiser},
}

@misc{stacks-project,
  author       = {The {Stacks project authors}},
  title        = {The Stacks project},
  howpublished = {\url{https://stacks.math.columbia.edu}},
  year         = {2025},
  note         ={\url{https://stacks.math.columbia.edu}},
}

@book {Cohen-Macaulay-rings,
    AUTHOR = {Bruns, Winfried and Herzog, J\"{u}rgen},
     TITLE = {Cohen-{M}acaulay rings},
    SERIES = {Cambridge Studies in Advanced Mathematics},
    VOLUME = {39},
 PUBLISHER = {Cambridge University Press, Cambridge},
      YEAR = {1993},
     PAGES = {xii+403},
      ISBN = {0-521-41068-1},
   MRCLASS = {13H10 (13-02)},
  MRNUMBER = {1251956},
MRREVIEWER = {Matthew Miller},
}

@article{Salon87,
    AUTHOR = {Salon, Olivier},
     TITLE = {Suites automatiques \`a{} multi-indices et alg\'ebricit\'e},
   JOURNAL = {C. R. Acad. Sci. Paris S\'er. I Math.},
  FJOURNAL = {Comptes Rendus des S\'eances de l'Acad\'emie des Sciences.
              S\'erie I. Math\'ematique},
    VOLUME = {305},
      YEAR = {1987},
    NUMBER = {12},
     PAGES = {501--504},
      ISSN = {0249-6291},
   MRCLASS = {11T99 (11K31 13F25)},
  MRNUMBER = {916320},
}

@misc{Salon89,
	author = {Salon, Oliver},
	title = {Propri\'et\'es arithm\'etiques des automates multidimensionnels},
	howpublished = {PhD thesis, Universit\'e Bordeaux I},
	year = {1989},
    note = {PhD Thesis, Universit\'e Bordeaux I}
}

@article{Sharif-Woodcock,
    AUTHOR = {Sharif, Habib and Woodcock, Christopher F.},
     TITLE = {Algebraic functions over a field of positive characteristic
              and {H}adamard products},
   JOURNAL = {J. London Math. Soc. (2)},
  FJOURNAL = {Journal of the London Mathematical Society. Second Series},
    VOLUME = {37},
      YEAR = {1988},
    NUMBER = {3},
     PAGES = {395--403},
      ISSN = {0024-6107,1469-7750},
   MRCLASS = {12E99},
  MRNUMBER = {939116},
MRREVIEWER = {N.\ Sankaran},
       DOI = {10.1112/jlms/s2-37.3.395},
       URL = {https://doi.org/10.1112/jlms/s2-37.3.395},
}

@article{Harase88,
    AUTHOR = {Harase, Takashi},
     TITLE = {Algebraic elements in formal power series rings},
   JOURNAL = {Israel J. Math.},
  FJOURNAL = {Israel Journal of Mathematics},
    VOLUME = {63},
      YEAR = {1988},
    NUMBER = {3},
     PAGES = {281--288},
      ISSN = {0021-2172},
   MRCLASS = {13F25 (11T99)},
  MRNUMBER = {969943},
MRREVIEWER = {Arkadiusz\ P\l oski},
       DOI = {10.1007/BF02778035},
       URL = {https://doi.org/10.1007/BF02778035},
}

@article{Harase89,
    AUTHOR = {Harase, Takashi},
     TITLE = {Algebraic elements in formal power series rings. {II}},
   JOURNAL = {Israel J. Math.},
  FJOURNAL = {Israel Journal of Mathematics},
    VOLUME = {67},
      YEAR = {1989},
    NUMBER = {1},
     PAGES = {62--66},
      ISSN = {0021-2172},
   MRCLASS = {13F25 (11T99)},
  MRNUMBER = {1021361},
MRREVIEWER = {Arkadiusz\ P\l oski},
       DOI = {10.1007/BF02764899},
       URL = {https://doi.org/10.1007/BF02764899},
}

@article{Kedelaya,
    AUTHOR = {Kedlaya, Kiran S.},
     TITLE = {Finite automata and algebraic extensions of function fields},
   JOURNAL = {J. Th\'eor. Nombres Bordeaux},
  FJOURNAL = {Journal de Th\'eorie des Nombres de Bordeaux},
    VOLUME = {18},
      YEAR = {2006},
    NUMBER = {2},
     PAGES = {379--420},
      ISSN = {1246-7405,2118-8572},
   MRCLASS = {11B85 (11D88 13F25 16W60 68Q45 68Q70)},
  MRNUMBER = {2289431},
MRREVIEWER = {Jeffrey\ O.\ Shallit},
       DOI = {10.5802/jtnb.551},
       URL = {https://doi.org/10.5802/jtnb.551},
}

@article {Hbaib-Mkaouar,
    AUTHOR = {Hbaib and Mkaouar},
     TITLE = {Sur le b\^eta-d\'eveloppement de 1 dans le corps des s\'eries
              formelles},
   JOURNAL = {Int. J. Number Theory},
  FJOURNAL = {International Journal of Number Theory},
    VOLUME = {2},
      YEAR = {2006},
    NUMBER = {3},
     PAGES = {365--378},
      ISSN = {1793-0421,1793-7310},
   MRCLASS = {11A67 (11B85 11Y55)},
  MRNUMBER = {2264597},
MRREVIEWER = {Peter\ J.\ Grabner},
       DOI = {10.1142/S1793042106000619},
       URL = {https://doi.org/10.1142/S1793042106000619},
}

@article{Scheicher-Sirvent,
    AUTHOR = {Scheicher, Klaus and Sirvent, V\'ictor F.},
     TITLE = {Automatic {$\beta$}-expansions of formal {L}aurent series over
              finite fields},
   JOURNAL = {Finite Fields Appl.},
  FJOURNAL = {Finite Fields and their Applications},
    VOLUME = {27},
      YEAR = {2014},
     PAGES = {1--23},
      ISSN = {1071-5797,1090-2465},
   MRCLASS = {11K16 (11A63 12E20 37B10)},
  MRNUMBER = {3168980},
MRREVIEWER = {Manfred\ G.\ Madritsch},
       DOI = {10.1016/j.ffa.2013.12.005},
       URL = {https://doi.org/10.1016/j.ffa.2013.12.005},
}

@article{Denef-Lipshitz,
    AUTHOR = {Denef and Lipshitz},
     TITLE = {Algebraic power series and diagonals},
   JOURNAL = {J. Number Theory},
  FJOURNAL = {Journal of Number Theory},
    VOLUME = {26},
      YEAR = {1987},
    NUMBER = {1},
     PAGES = {46--67},
      ISSN = {0022-314X,1096-1658},
   MRCLASS = {16A05},
  MRNUMBER = {883533},
MRREVIEWER = {M.\ Mend\`es France},
       DOI = {10.1016/0022-314X(87)90095-3},
       URL = {https://doi.org/10.1016/0022-314X(87)90095-3},
}

@article{Uramoto,
    AUTHOR = {Uramoto, Takeo},
     TITLE = {Semi-{G}alois categories {II}: {A}n arithmetic analogue of
              {C}hristol's theorem},
   JOURNAL = {J. Algebra},
  FJOURNAL = {Journal of Algebra},
    VOLUME = {508},
      YEAR = {2018},
     PAGES = {539--568},
      ISSN = {0021-8693},
   MRCLASS = {11B85 (13F35 68Q70)},
  MRNUMBER = {3810306},
MRREVIEWER = {Vilius Stakenas},
       DOI = {10.1016/j.jalgebra.2018.04.033},
       URL = {https://doi.org/10.1016/j.jalgebra.2018.04.033},
}

@book{Eisenbud,
    AUTHOR = {Eisenbud, David},
     TITLE = {Commutative algebra},
    SERIES = {Graduate Texts in Mathematics},
    VOLUME = {150},
      NOTE = {With a view toward algebraic geometry},
 PUBLISHER = {Springer-Verlag, New York},
      YEAR = {1995},
     PAGES = {xvi+785},
      ISBN = {0-387-94268-8; 0-387-94269-6},
   MRCLASS = {13-01 (14A05)},
  MRNUMBER = {1322960},
MRREVIEWER = {Matthew\ Miller},
       DOI = {10.1007/978-1-4612-5350-1},
       URL = {https://doi.org/10.1007/978-1-4612-5350-1},
}

@book {Raynaud,
    AUTHOR = {Raynaud, Michel},
     TITLE = {Anneaux locaux hens\'{e}liens},
    SERIES = {Lecture Notes in Mathematics, Vol. 169},
 PUBLISHER = {Springer-Verlag, Berlin-New York},
      YEAR = {1970},
     PAGES = {v+129},
   MRCLASS = {13.95},
  MRNUMBER = {277519},
MRREVIEWER = {J.-P. Lafon},
}

@article {Artin-approximation,
    AUTHOR = {Artin, M.},
     TITLE = {Algebraic approximation of structures over complete local
              rings},
   JOURNAL = {Inst. Hautes \'{E}tudes Sci. Publ. Math.},
  FJOURNAL = {Institut des Hautes \'{E}tudes Scientifiques. Publications
              Math\'{e}matiques},
    NUMBER = {36},
      YEAR = {1969},
     PAGES = {23--58},
      ISSN = {0073-8301},
   MRCLASS = {14.05},
  MRNUMBER = {268188},
MRREVIEWER = {H. Kurke},
       URL = {http://www.numdam.org/item?id=PMIHES_1969__36__23_0},
}

@book {Matsumura-commutative-ring-theory-1986,
    AUTHOR = {Matsumura, Hideyuki},
     TITLE = {Commutative ring theory},
    SERIES = {Cambridge Studies in Advanced Mathematics},
    VOLUME = {8},
      NOTE = {Translated from the Japanese by M. Reid},
 PUBLISHER = {Cambridge University Press, Cambridge},
      YEAR = {1986},
     PAGES = {xiv+320},
      ISBN = {0-521-25916-9},
   MRCLASS = {13-01},
  MRNUMBER = {879273},
MRREVIEWER = {W. V. Vasconcelos},
}

@book {MilEC,
    AUTHOR = {Milne, James S.},
     TITLE = {\'{E}tale cohomology},
    SERIES = {Princeton Mathematical Series},
    VOLUME = {33},
 PUBLISHER = {Princeton University Press, Princeton, N.J.},
      YEAR = {1980},
     PAGES = {xiii+323},
      ISBN = {0-691-08238-3},
   MRCLASS = {14-02 (14F20 18F99)},
  MRNUMBER = {559531},
MRREVIEWER = {G. Horrocks},
}

@book {Matsumura-commutative-alg,
    AUTHOR = {Matsumura, Hideyuki},
     TITLE = {Commutative algebra},
    SERIES = {Mathematics Lecture Note Series},
    VOLUME = {56},
   EDITION = {Second},
 PUBLISHER = {Benjamin/Cummings Publishing Co., Inc., Reading, Mass.},
      YEAR = {1980},
     PAGES = {xv+313},
      ISBN = {0-8053-7026-9},
   MRCLASS = {13-02},
  MRNUMBER = {575344},
MRREVIEWER = {R. M. Fossum},
}

@article {Greco,
    AUTHOR = {Greco, Silvio},
     TITLE = {Henselization of a ring with respect to an ideal},
   JOURNAL = {Trans. Amer. Math. Soc.},
  FJOURNAL = {Transactions of the American Mathematical Society},
    VOLUME = {144},
      YEAR = {1969},
     PAGES = {43--65},
      ISSN = {0002-9947,1088-6850},
   MRCLASS = {13.95},
  MRNUMBER = {251030},
MRREVIEWER = {W.\ Heinzer},
       DOI = {10.2307/1995268},
       URL = {https://doi.org/10.2307/1995268},
}

@book {Atiyah-Macdonald,
    AUTHOR = {Atiyah, M. F. and Macdonald, I. G.},
     TITLE = {Introduction to commutative algebra},
    SERIES = {Addison-Wesley Series in Mathematics},
   EDITION = {economy},
      NOTE = {For the 1969 original see [ MR0242802]},
 PUBLISHER = {Westview Press, Boulder, CO},
      YEAR = {2016},
     PAGES = {ix+128},
      ISBN = {978-0-8133-5018-9; 0-201-00361-9; 0-201-40751-5},
   MRCLASS = {13-01 (13-00)},
  MRNUMBER = {3525784},
}

@book{MacLane,
    author = {Saunders Mac Lane},
    title = {Categories for the Working Mathematician},
    publisher = {Springer New York, NY},
    year = {2013}
}

@article {Adamczewski-Bugeaud,
    AUTHOR = {Adamczewski, Boris and Bugeaud, Yann },
     TITLE = {On the complexity of algebraic numbers I. Expansions in integer base},
   JOURNAL = {Annals of Mathematics},
    VOLUME = {165},
      YEAR = {2007},
     PAGES = {547--565},
}

@article {Borel,
    AUTHOR = {Borel, \'Emile},
     TITLE = {Sur les chiffres d\'ecimaux de {$\sqrt{2}$} et divers
              probl\`emes de probabilit\'es en cha\^ine},
   JOURNAL = {C. R. Acad. Sci. Paris},
  FJOURNAL = {Comptes Rendus Hebdomadaires des S\'eances de l'Acad\'emie des
              Sciences},
    VOLUME = {230},
      YEAR = {1950},
     PAGES = {591--593},
      ISSN = {0001-4036},
   MRCLASS = {60.0X},
  MRNUMBER = {34544},
MRREVIEWER = {W.\ Feller},
}

@article {Allouche,
    AUTHOR = {Allouche, Jean-Paul },
     TITLE = {Sur la transcendance de la serie formelle $\Pi$},
   JOURNAL = {Seminaire de theorie des nombres de Bordeaux},
    VOLUME = {2},
      YEAR = {1990},
     PAGES = {103-117},
}
	
\ContactInfo

\end{document}